\newcommand{\no}{\noindent}
\newtheorem{thm}{Theorem}[section]
\newtheorem{prop}[thm]{Proposition}
\newtheorem{cor}[thm]{Corollary}
\newtheorem{rem}[thm]{Remark}
\newtheorem{definition}[thm]{Definition}
\newtheorem{ex}[thm]{Example}
\newtheorem{lem}[thm]{Lemma}
\numberwithin{equation}{section}
\newcommand{\R}{\mathbb{R}}
\newcommand{\ds}{\displaystyle}
\newcommand{\sm}{\setminus}
\newcommand{\pd}{\partial}
\newcommand{\al}{\alpha}
\newcommand{\de}{\delta}
\newcommand{\De}{\Delta}
\newcommand{\ep}{\varepsilon}
\newcommand{\la}{\lambda}
\newcommand{\ome}{\omega}
\newcommand{\Ome}{\Omega}
\renewcommand{\(}{\left(}
\renewcommand{\)}{\right)}
\renewcommand{\lvert}{\left\vert}
\renewcommand{\rvert}{\right\vert}
\DeclareMathOperator{\diam}{diam}
\DeclareMathOperator{\conv}{conv}
\DeclareMathOperator{\Vol}{Vol}
\DeclareMathOperator{\Refl}{Refl}
\DeclareMathOperator{\Span}{Span}
\DeclareMathOperator{\slope}{slope}
\DeclareMathOperator{\sign}{sign}
\DeclareMathOperator{\Ker}{Ker}
\def\c#1{\overset{\mbox{\tiny $\circ$}}{#1}} 
\begin{document}

\title{\bf Experimental investigation on the uniqueness of a center of a body}

\author{Shigehiro Sakata}

\date{\today}

\maketitle

\begin{abstract} 
The object of our investigation is a point that gives the maximum value of a potential with a strictly decreasing radially symmetric kernel. It defines a center of a body in $\R^m$. When we choose the Riesz kernel or the Poisson kernel as the kernel, such centers are called an $r^{\al -m}$-center or an illuminating center, respectively.

The existence of a center is easily shown but the uniqueness does not always hold. Sufficient conditions of the uniqueness of a center have been studied by some researchers. The main results in this paper are some new sufficient conditions for the uniqueness of a center of a body.\\   

\no{\it Keywords and phrases}. Radial center, $r^{\al -m }$-center, illuminating center, minimal unfolded region, heart, Alexandrov's reflection principle, moving plane method.\\
\no 2010 {\it Mathematics Subject Classification}: 52A40, 52A10, 52A20, 51M16, 35B38, 26B25.
\end{abstract}
\section{Introduction}
Let $\Ome$ be a body (the closure of a bounded open set) in $\R^m$. We consider a potential of the form
\begin{equation}
K_\Ome (x) = \int_\Ome k(r) dy,\ x \in \R^m ,\ r= \lvert x-y \rvert .
\end{equation} 
If the kernel $k:(0,+\infty) \to \R$ is strictly decreasing and satisfies the condition $(C^0_\al)$ (which is detailed in section 2), then the potential $K_\Ome$ is continuous on $\R^m$ (Proposition \ref{regularity}) and has a maximum point only in the convex hull of $\Ome$ (Proposition \ref{exist}). We call a maximum point of $K_\Ome$ a {\it $k$-center} of $\Ome$ in what follows. The object of our investigation in this paper is a $k$-center of $\Ome$.

Analytically, the study on $k$-centers is related to the investigation on the shape of a solution of a partial differential equation. When the kernel $k(r)$ is given by the Gauss kernel $(4\pi t)^{-m/2} \exp (-r^2 /(4t))$ with a positive parameter $t$, we obtain the unique bounded solution of the Cauchy problem for the heat equation with initial datum $\chi_\Ome$. A (spatial) maximum point of the solution of the heat equation is called a {\it hot spot}. The existence, asymptotic behavior, uniqueness and location of a hot spot were well-studied, for example, in \cite{BL, BMS, CK, JS, MS}. When the kernel $k(r)$ is given by the Poisson kernel $h(r^2 +h^2)^{-(m+1)/2}$ with a positive parameter $h$, we obtain the Poisson integral for the upper half-space (up to a constant multiple). The Poisson integral is a solution of the Laplace equation for the upper half-space. A maximum point of the Poisson integral was studied in \cite{Sak}. 

Geometrically, the study on $k$-centers is related to Moszy\'{n}ska's radial center. In \cite{M1}, she introduced a {\it radial center} of a star body $A$ induced by a function $\phi$ as a maximum point of the function 
\begin{equation}\label{Phi}
\Phi_A(x)= \int_{S^{m-1}} \phi \( \rho_{A-x} (v) \) d\sigma (v),\ x \in \Ker A .
\end{equation}
Here, $\rho_{A-x} (v) = \max \{ \la \geq 0 \vert \la v + x \in A \}$ is the {\it radial function} of $A$ with respect to $x$, and $\Ker A = \{ p \in A \vert \forall q \in A,\ \overline{pq} \subset A\}$ is the {\it kernel} of $A$. Her motivation for the study on radial centers comes from the optimal position of the origin for the {\it intersection body} of a star body. Intersection bodies were introduced by Lutwak in \cite{L} to solve Busemann and Petty's problem \cite{BP}. We refer to Moszy\'{n}ska's text book \cite[pp. 185--201]{M2} for those historical backgrounds in convex geometry. The paper \cite{HMP} is also a good reference for the physical meaning of radial centers.

Using the polar coordinate, we rewrite the function $\Phi_A (x)$ as 
\begin{equation}
\Phi_A (x) = \int_A \phi' (r) r^{1-m} dy + \phi (0) \sigma \( S^{m-1} \) ,\ x \in \Ker A ,\ r=\lvert x-y \rvert .
\end{equation}
Putting $k(r) =\phi'(r) r^{1-m}$, we obtain the potential $K_A$. Since the potential $K_A$ is defined on $\R^m$ even if $A$ is NOT star-shaped, we can understand that the notion of $k$-centers is an extension of radial centers.

When the kernel $k(r)$ is given by the monomial $r^{\al -m}$, $k$-centers are well-studied. In \cite{M1}, when $\phi (\rho ) = \rho^{\al}$ in \eqref{Phi}, Moszy\'{n}ska called a maximum point of $\Phi_A$ a {\it radial center of order $\al$} and showed that if $0<\al \leq 1$, then every convex body has a unique radial center of order $\al$. In \cite{H1}, for $\al >1$, the uniqueness of a radial center of a convex body was studied but the argument included an error. In \cite{H2}, Herburt studied the location of a radial center of order 1. She showed that every smooth convex body has a radial center of order 1 only in its interior. In \cite{O1}, O'Hara investigated the potential
\begin{equation}
V_\Ome^{(\al )} (x) =
\begin{cases}
\ds \sign (m-\al ) \int_\Ome r^{\al -m} dy &(0< \al \neq m),\\
\ds -\int_\Ome \log r dy &(\al =m ),
\end{cases}
\ x \in \R^m ,\ r=\lvert x-y \rvert .
\end{equation}
He called the potential $V_\Ome^{(\al )}$ the {\it $r^{\al -m}$-potential} and defined an {\it $r^{\al -m}$-center} of $\Ome$ as a maximum point of $V_\Ome^{(\al )}$. In other words, he extended the notion in \cite{M1} to a non-star-shaped case. He showed that if $\al \geq m+1$, then every body has a unique $r^{\al -m}$-center.

On the uniqueness of a $k$-center in \cite{M1, O1}, the common idea is to show the strict concavity of the potential $K_\Ome$ on the convex hull of $\Ome$ (the location of $k$-centers). But, using {\it Alexandrov's reflection principle} or the {\it moving plane method} (\cite{GNN, Ser}), we can restrict a region containing all $k$-centers smaller than the convex hull of $\Ome$. We call such a small region the {\it minimal unfolded region} of $\Ome$, denoted by $Uf(\Ome )$, which was introduced by O'Hara in \cite{O1}. When $\Ome$ is a convex body, in \cite{BMS}, the minimal unfolded region was independently defined by Brasco, Magnanini and Salani as the {\it heart} of $\Ome$ denoted by $\heartsuit (\Ome )$. Hence, in order to show the uniqueness of a $k$-center, it is sufficient to show the strict concavity of $K_\Ome$ on the minimal unfolded region.

The minimal unfolded region $Uf(\Ome )$ is made by the following procedure: Fix a direction $v \in S^{m-1}$ and a parameter $b \in \R$; Let $\Refl_{v,b}$ denote the reflection of $\R^m$ in the hyperplane $\{ z \in \R^m \vert z \cdot v =b \}$; By $\Ome^+_{v,b} = \{ z \in \Ome \vert z \cdot v \geq b \}$, we denote the set of all points in $\Ome$ whose height in the direction $v$ are not smaller than $b$; We repeat to fold the set $\Ome^+_{v,b}$ by the reflection $\Refl_{v,b}$ and to gradually decrease the value $b \in \R$ until the image is stick out from $\Ome$; Let $l(v)$ be the {\it minimum folding height} for $v$, that is, put
\begin{equation}
l(v) = \min\left\{ a \in \R \lvert \forall b \geq a,\ \Refl_{v,b}\( \Ome^+_{v,b} \) \subset \Ome \right. \right\} ;
\end{equation}
Define the minimal unfolded region of $\Ome$ by
\begin{equation}\label{uf}
Uf(\Ome )= \bigcap_{v \in S^{m-1}} \left\{ \left. z \in \R^m \rvert z \cdot v \leq l(v) \right\} .
\end{equation}

For example, in $\R^2$, the minimal unfolded region of the union of the two same-sized discs
\begin{equation}\label{D2}
D_1 \cup D_2 = \left\{ \( y_1 ,y_2 \) \lvert \( y_1 +1 \)^2 +y_2^2 \leq 1 \right\} \right. \cup \left\{ \( y_1 ,y_2 \) \lvert \( y_1 -1 \)^2 +y_2^2 \leq 1 \right\} \right.
\end{equation}
is the line segment $\{ (y_1 ,0) \vert -1\leq y_1 \leq 1\}$. Therefore, when we investigate the number of $k$-centers of $D_1 \cup D_2$, we should consider the graph of the function $K_{D_1 \cup D_2}(\la ,0)$ for $-1 \leq \la \leq 1$. Then, for a given concrete kernel, we can draw the graph of $K_{D_1 \cup D_2}(\la ,0)$ with the calculator Maple. In such a manner, we give some examples of the graphs of $r^{\al -m}$-potentials. To be precise, we produce the following examples:
\begin{enumerate}
\item[(1)] The union of the two same-sized discs (\ref{D2}) has two $r^{-1/2}$-centers.
\item[(2)] The set of $r^{-1/2}$-centers of the annulus $\{ (y_1 ,y_2) \vert 1 \leq y_1^2 +y_2^2 \leq 4\}$ is a circle.
\item[(3)] The isosceles triangle $\{ (y_1 ,y_2) \vert 0\leq y_1 \leq 1,\ \vert y_2 \vert \leq ( \tan ( \pi /10 ) ) y_1 \}$ has a unique $r^{-1/2}$-center.
\item[(4)] The cone $\{ (y_1 ,y_2,y_3) \vert 0\leq y_1 \leq 1,\ y_2^2 +y_3^2 \leq (\tan^2 ( \pi /10 ) ) y_1^2 \}$ has a unique $r^{-1/2}$-center.
\item[(5)] The body of revolution $\{ (y_1 ,y_2,y_3) \vert 0\leq y_1 \leq 1,\ y_2^2 +y_3^2 \leq ( \tan^2 ( \pi /10 ) ) y_1 \}$ has a unique $r^{-1/2}$-center.
\end{enumerate}
From the third example, we see that, in general, the $r^{\al -m}$-potential is not concave on the convex hull of a body for $1<\al <m+1$. Hence it seems difficult to give a sufficient condition for the uniqueness of an $r^{\al -m}$-center for $1 < \al <m+1$.

Our main result in this paper is a sufficient condition for the uniqueness of a $k$-center implying the examples (3) and (5). Precisely, if the kernel $k$ satisfies the condition $(C^1_\al )$ for some $\al >1$ (which is detailed in section 2), and if $k'(r)/r$ is increasing, then the body of revolution
\begin{equation}\label{revolution_omega}
\Ome = \left\{ \( y_1 ,\bar{y} \) \in \R \times \R^{m-1} \lvert 0\leq y_1 \leq 1,\ \lvert \bar{y} \rvert \leq \ome \( y_1 \) \right\} \right. ,
\end{equation}
where $\ome :[0,1] \to [0,+\infty )$ is a piecewise $C^1$ function with $\ome^{m-1}$ concave, has a unique $k$-center. This result immediately implies the uniqueness of an $r^{\al -m}$-center of the body (\ref{revolution_omega}) for $1<\al <m+1$. Furthermore, using the same manner as in the above result, we also show that a non-obtuse triangle in $\R^2$ has a unique $k$-center if $k'(r)/r$ is increasing. We remark that these results on the uniqueness of an $r^{\al -m}$-center cannot follow from the power-concavity argument as in \cite{BL}.\\

Throughout this paper, $\conv X$, $\diam X$, $\c{X}$ (or $X^\circ$) and $X^c$ denote the convex hull, the diameter, the interior and the complement of a set $X$ in $\R^m$, respectively. We denote the spherical Lebesgue measure of any $N$-dimensional space by $\sigma_N$.\\

\no{\bf Acknowledgements.} The author would like to express his deep gratitude to his advisor Jun O'Hara for giving kind advice to him. 
\section{Preliminaries}
In this section, we introduce necessary results in \cite{BMS, BM, O1, Sak} for our study.

For an $\al >0$, we define the condition $(C^0_\al )$ of a function $k :(0,+\infty) \to \R$ as
\begin{enumerate}
\item[$( C^0_\al )$] $k$ is continuous on the interval $(0, +\infty )$, and 
\begin{equation}
k(r) = 
\begin{cases}
O\( r^{\al -m} \)  \ &( \al < m ),\\
O\( \log r \)      \ &( \al = m ),\\
O\(1 \)            \ &( \al > m ),
\end{cases} 
\end{equation} 
as $r$ tends to $0^+$.
\end{enumerate}
For an $\al >1$, we define the condition $(C^1_\al )$ of a function $k :(0,+\infty) \to \R$ as
\begin{enumerate}
\item[$(C^1_\al )$] $k$ is once continuously differentiable on the interval $(0, +\infty )$, and 
\begin{equation}
k(r) = 
\begin{cases}
O\( r^{\al -m} \)  \ &( \al < m ),\\
O\( \log r \)      \ &( \al = m ),\\
O\(1 \)            \ &( \al > m ),
\end{cases} \
k '(r) = 
\begin{cases}
O\( r^{\al -m -1} \)  \ &( \al < m+1 ),\\
O\( \log r \)         \ &( \al = m+1 ),\\
O\(1 \)               \ &( \al > m+1 ),
\end{cases}
\end{equation} 
as $r$ tends to $0^+$.
\end{enumerate}

Let $\Ome$ be a body (the closure of a bounded open set) in $\R^m$, and  
\begin{equation}\label{K}
K_\Ome (x) = \int_\Ome k(r) dy,\ x \in \R^m ,\ r=\lvert x-y \rvert .
\end{equation}
We always assume that the kernel $k$ satisfies any of the conditions $(C^0_\al)$ or $(C^1_\al)$. We denote a point $x$ in $\R^m$ by $x=(x_1,\ldots, x_m)$ and a point $y$ in $\Ome$ by $y=(y_1,\ldots,y_m)$. We understand that the letter $r$ is always used for $r=\vert x  - y \vert$. 
\subsection{Properties of {\boldmath $K_\Ome$}}
Let us prepare some properties of our potential $K_\Ome$ from \cite{Sak} without those proofs.

\begin{prop}[{\cite[Propositions 2.3, 2.6, 4.1 and Corollary 4.2]{Sak}}]\label{regularity}
Let $\Ome$ be a body in $\R^m$.
\begin{enumerate}[$(1)$]
\item If the kernel $k$ satisfies the condition $(C^0_\al)$ for some $\al >0$, then the potential $K_\Ome$ is continuous on $\R^m$.
\item If the kernel $k$ satisfies the condition $(C^1_\al )$ for some $\al >1$, then the potential $K_\Ome$ is of class $C^1$ on $\R^m$, and we have 
\[
\frac{\pd K_\Ome}{\pd x_j}(x) = \int_\Ome \frac{\pd}{\pd x_j}k(r) dy,\ x \in \R^m .
\]
\item If $\Ome$ has a piecewise $C^1$ boundary, and if the kernel $k$ satisfies the condition $(C^1_\al )$ for some $\al >1$, then the potential $K_\Ome$ is of class $C^2$ on $\R^m \sm \pd \Ome$, and we have
\begin{align*}
\frac{\pd K_\Ome}{\pd x_j}(x) &= -\int_{\pd \Ome} k(r) e_j \cdot n(y) d\sigma (y),\ x \in \R^m ,\\
\frac{\pd^2 K_\Ome}{\pd x_i \pd x_j}(x) &= -\int_{\pd \Ome} \frac{\pd}{\pd x_i}k(r) e_j \cdot n(y) d\sigma (y),\ x \in \R^m \sm \pd \Ome ,
\end{align*} 
where $n$ is the outer unit normal vector field of $\pd \Ome$, and $e_j$ is the $j$-th unit vector of $\R^m$.
\end{enumerate}
\end{prop}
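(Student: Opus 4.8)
The plan is to reduce all three assertions to interchanging a limiting operation — a limit, a difference quotient, or a derivative — with the integral sign in \eqref{K}, the only delicate issue being the behaviour of the integrand as $r=|x-y|\to 0^+$; indeed $k$ is continuous on all of $(0,+\infty )$ and $\Ome$ is bounded, so nothing can go wrong at large $r$. The growth conditions in $(C^0_\al )$ and $(C^1_\al )$ are designed exactly for this: $r\mapsto r^{\al -m}$ is integrable near the origin of $\R^m$ iff $\al >0$, since $\int_0 r^{\al -m}r^{m-1}\,dr=\int_0 r^{\al -1}\,dr$, and $r\mapsto r^{\al -m-1}$ is so iff $\al >1$ (the logarithmic and bounded cases being milder still). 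The workhorse estimate I would record first is a uniform one: if $|x-x_0|<\de$ then $B_\de (x_0)\subset B_{2\de}(x)$, so
\[
\int_{B_\de (x_0)}|x-y|^{\al -m}\,dy\le\int_{B_{2\de}(x)}|x-y|^{\al -m}\,dy=C\de^{\al},
\]
which tends to $0$ as $\de\to 0$, uniformly for $x$ near $x_0$, and likewise with $|\log|x-y||$ or with $1$ in place of $|x-y|^{\al -m}$.

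For (1), write $K_\Ome (x)-K_\Ome (x_0)=\int_\Ome\bigl(k(|x-y|)-k(|x_0-y|)\bigr)\,dy$ and split $\Ome$ into $\Ome\cap B_\de (x_0)$ and $\Ome\sm B_\de (x_0)$. On the latter piece, once $x$ is close to $x_0$ both $|x-y|$ and $|x_0-y|$ stay in a fixed compact subinterval of $(0,+\infty )$, so uniform continuity of $k$ there together with dominated convergence sends that contribution to $0$ as $x\to x_0$; on the former piece the bound $|k(r)|\le Cr^{\al -m}$ (resp.\ $C|\log r|$, resp.\ $C$) from $(C^0_\al )$ and the displayed estimate make the contribution arbitrarily small, uniformly in $x$, once $\de$ is small. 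Letting $x\to x_0$ and then $\de\to 0$ gives continuity.

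For (2), the candidate for $\pd K_\Ome /\pd x_j$ is $g_j(x)=\int_\Ome k'(|x-y|)\,(x_j-y_j)/|x-y|\,dy$, whose integrand is dominated by $C|x-y|^{\al -m-1}$, locally integrable because $\al >1$; the splitting argument of (1) then shows $g_j$ is continuous on $\R^m$. To identify $g_j$ with the partial derivative I would integrate along a segment and use Fubini with the one-variable fundamental theorem of calculus,
\[
\int_0^t g_j(x+se_j)\,ds=\int_\Ome\left(\int_0^t\frac{\pd}{\pd s}k(|x+se_j-y|)\,ds\right)dy=\int_\Ome\bigl(k(|x+te_j-y|)-k(|x-y|)\bigr)dy,
\]
the interchange being legitimate since along the segment $|k'|$ is dominated by an integrable function of $y$; the right-hand side is $K_\Ome (x+te_j)-K_\Ome (x)$, so $\pd K_\Ome /\pd x_j=g_j$ is continuous and $K_\Ome\in C^1(\R^m)$.

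For (3), start from (2) and use $\pd_{x_j}k(|x-y|)=-\pd_{y_j}k(|x-y|)=-\Div_y\bigl(k(|x-y|)e_j\bigr)$. If $x\in\R^m\sm\c{\Ome}$, the divergence theorem on the piecewise $C^1$ body $\Ome$ gives the boundary formula directly; if $x\in\c{\Ome}$, excise $B_\ep (x)$, apply the divergence theorem on $\Ome\sm B_\ep (x)$, and note that the extra term over $\pd B_\ep (x)$ equals $k(\ep)\int_{\pd B_\ep (x)}e_j\cdot n\,d\sigma=k(\ep)\cdot 0=0$, since the average of a fixed coordinate of the outward normal over a sphere vanishes; letting $\ep\to 0$ and using local integrability of $k(|x-y|)$ yields $\pd K_\Ome /\pd x_j(x)=-\int_{\pd\Ome}k(|x-y|)e_j\cdot n(y)\,d\sigma (y)$ for every $x\in\R^m$ (for $x\in\pd\Ome$ the boundary integral still converges because $\al -m>-(m-1)$). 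Finally, for $x\notin\pd\Ome$ the distance $|x-y|$ stays bounded away from $0$ as $y$ ranges over $\pd\Ome$, so $k(|x-y|)$ is $C^1$ in $x$ there and one may differentiate under the boundary integral once more, obtaining the stated formula for $\pd^2 K_\Ome /\pd x_i\pd x_j$ and its continuity, i.e.\ $K_\Ome\in C^2(\R^m\sm\pd\Ome )$. The step I expect to need the most care is the uniform control of the singular part: making the translation estimate uniform in $x$ near the base point, and in (3) checking both that the divergence theorem is available on merely piecewise $C^1$ domains and that the excised-sphere term vanishes \emph{exactly} rather than merely being small; the remainder is bookkeeping with dominated convergence and Fubini.
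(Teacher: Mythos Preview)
The paper does not actually prove this proposition: it is quoted verbatim from \cite{Sak} with the prefatory remark ``Let us prepare some properties of our potential $K_\Ome$ from \cite{Sak} without those proofs.'' So there is nothing to compare against on the paper's side.

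Your argument is the standard one for potentials with weakly singular kernels and is essentially correct. The splitting $\Ome=(\Ome\cap B_\de(x_0))\cup(\Ome\sm B_\de(x_0))$ with the uniform estimate $\int_{B_\de(x_0)}|x-y|^{\al-m}\,dy\le C\de^{\al}$ is exactly the right device for (1) and for the continuity of $g_j$ in (2); the Fubini-plus-FTC identification of $g_j$ with $\pd K_\Ome/\pd x_j$ is clean and avoids any issue with difference quotients. In (3) your excision argument is sound, and the observation that the sphere contribution vanishes \emph{exactly} (not merely $O(\ep^{\al-1})$) because $\int_{\pd B_\ep(x)}n_j\,d\sigma=0$ is the efficient way to handle it. One small point you glossed: for $x\in\pd\Ome$ you noted that the boundary integral converges (since $\al>1$ gives $r^{\al-m}$ locally integrable on an $(m-1)$-manifold), but you did not say why the \emph{identity} holds there; the simplest fix is to observe that both sides are continuous in $x$ --- the left by (2), the right by the same splitting trick applied on $\pd\Ome$ --- and pass to the limit from $\R^m\sm\pd\Ome$.
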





\begin{prop}[{\cite[Proposition 3.2]{Sak}}]\label{exist}
Let $\Ome$ be a body in $\R^m$. Suppose that the kernel $k$ is strictly decreasing and satisfies the condition $(C_\al^0)$ for some $\al >0$. The potential $K_\Ome$ has a maximum point, and any maximizer of $K_\Ome$ belongs to the convex hull of $\Ome$.
\end{prop}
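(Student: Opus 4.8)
The plan is to prove the two assertions separately: the existence of a maximizer by a coercivity argument, and the inclusion of every maximizer in $\conv\Ome$ via the nearest-point projection onto this convex set.

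\medskip
\noindent\emph{Existence.} First I would record that, since $k$ is strictly decreasing on $(0,+\infty)$, the limit $k_\infty := \lim_{r\to+\infty} k(r)$ exists in $[-\infty,+\infty)$ and $k(r) > k_\infty$ for every finite $r>0$. Set $M := \sup_{y\in\Ome}|y|$ and $d := \diam\Ome$; these are finite, and $d>0$, $\Vol(\Ome)>0$ because $\Ome$ is the closure of a nonempty bounded open set. For $|x|>M$ and $y\in\Ome$ one has $|x-y|\ge |x|-M$, so monotonicity gives $K_\Ome(x)\le k(|x|-M)\,\Vol(\Ome)$, whence $\limsup_{|x|\to+\infty}K_\Ome(x)\le k_\infty\,\Vol(\Ome)$ (this reads $K_\Ome(x)\to-\infty$ when $k_\infty=-\infty$). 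Fixing any $x_0\in\Ome$, every $y\in\Ome$ satisfies $|x_0-y|\le d$, hence $K_\Ome(x_0)\ge k(d)\,\Vol(\Ome) > k_\infty\,\Vol(\Ome)$. Therefore there is $R_0>0$, which we may take with $|x_0|<R_0$, such that $K_\Ome(x)<K_\Ome(x_0)$ whenever $|x|\ge R_0$. Since $K_\Ome$ is continuous on $\R^m$ by Proposition \ref{regularity}\,(1), it attains a maximum over the compact ball $\overline{B(0,R_0)}$ at some $x^\ast$, and by the choice of $R_0$ this $x^\ast$ maximizes $K_\Ome$ on all of $\R^m$.

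\medskip
\noindent\emph{Location.} Next I would show that a point $x\notin\conv\Ome$ cannot be a maximizer. Let $x'$ be the metric projection of $x$ onto the nonempty closed convex set $\conv\Ome$; then $x'\ne x$. The variational characterization $\langle x-x',\,y-x'\rangle\le 0$ for all $y\in\conv\Ome$, together with $x-y=(x-x')+(x'-y)$, gives
\[
|x-y|^2 = |x-x'|^2 + 2\langle x-x',\,x'-y\rangle + |x'-y|^2 \ge |x-x'|^2 + |x'-y|^2 > |x'-y|^2
\]
for every $y\in\conv\Ome$, in particular for every $y\in\Ome$; thus $|x-y|>|x'-y|\ge 0$ on $\Ome$. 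By strict monotonicity of $k$ we get $k(|x'-y|)>k(|x-y|)$ for every $y\in\Ome$ with $y\ne x'$ (near a possible singularity of $k$ at $0$ the left side even blows up while the right side stays bounded). Since the point $y=x'$, if it lies in $\Ome$ at all, is a null set and $\Vol(\Ome)>0$, integrating over $\Ome$ yields $K_\Ome(x')>K_\Ome(x)$. Hence every maximizer lies in $\conv\Ome$.

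\medskip
\noindent I do not anticipate a genuine obstacle: once the continuity of $K_\Ome$ and the integrability underlying its definition are granted (both from Proposition \ref{regularity} and the standing hypothesis $(C^0_\al)$), the rest is elementary. The only places needing a little care are the case $k_\infty=-\infty$ in the coercivity step — where the same bound is only stronger and the argument goes through a fortiori — and the strictness of the final integral inequality, which is clear because the integrand $k(|x'-y|)-k(|x-y|)$ is pointwise strictly positive on $\Ome$ off a null set; the remaining input is just the standard nearest-point property of a closed convex set.
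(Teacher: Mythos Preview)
Your argument is correct: the coercivity step (using monotonicity of $k$ to force $K_\Ome(x)\to k_\infty\,\Vol(\Ome)$ at infinity, together with a single interior value strictly above that level) gives existence, and the nearest-point projection onto $\conv\Ome$ strictly decreases every distance $|x-y|$, hence strictly increases the integrand, giving the location statement. The only minor points to watch---the case $k_\infty=-\infty$ and the null set $\{x'\}$---you already addressed.

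As for comparison: the paper does not actually prove this proposition. It is quoted in the preliminaries section as \cite[Proposition~3.2]{Sak} with the explicit remark that the results from \cite{Sak} are stated ``without those proofs''. So there is no in-paper argument to set yours against; your coercivity-plus-projection route is the natural one and would serve perfectly well as a self-contained proof here.
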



\begin{definition}[{\cite[Definition 3.3]{Sak}}]\label{center}
{\rm Let $\Ome$ be a body in $\R^m$. A point $x$ is called a {\it $k$-center} of $\Ome$ if it gives the maximum value of $K_\Ome$.}
\end{definition}

\subsection{Properties of minimal unfolded regions}
Let $Uf(\Ome )$ be the minimal unfolded region of a body $\Ome$ as in (\ref{uf}). We introduce some properties of the minimal unfolded region of $\Ome$ from \cite{BM, BMS, O1, Sak} with slight modifications in our case. (The studies performed in \cite{BM} does not ask for the regularity of $k$ but required the boundedness of $k(r)$ at $r=0^+$.) Geometric properties of the minimal unfolded region were also studied in \cite{O2}.

\begin{rem}[{\cite[p. 381]{O1}}]\label{rem_uf}
{\rm 
Let $\Ome$ be a body in $\R^m$.
\begin{enumerate}[(1)]
\item The centroid (the center of mass) of $\Ome$ is contained in $Uf(\Ome )$. Hence $Uf(\Ome )$ is not empty.
\item $Uf(\Ome )$ is contained in $\conv \Ome$ but, in general, not contained in $\Ome$.
\item $Uf(\Ome)$ is compact and convex.
\end{enumerate}
}
\end{rem}






\begin{ex}[{\cite[Lemma 5]{BM}, \cite[Example 3.4]{O1}}]\label{uf_triangle}
{\rm 
\begin{enumerate}
\item[(1)] The minimal unfolded region of a non-obtuse triangle is given by the polygon formed by the mid-perpendicular of edges and the bisectors of angles. In particular, it is contained in the triangle formed by joining the middle points of the edges.  
\item[(2)] The minimal unfolded region of an obtuse triangle is given by the polygon formed by the largest edge, its mid-perpendicular and the bisectors of angles.
\end{enumerate}
}
\end{ex}

\begin{prop}[{\cite[Proposition 4.9]{Sak}}]\label{exist_uf}
{\rm Let $\Ome$ be a body in $\R^m$. If $k$ is strictly decreasing, then any $k$-center of $\Ome$ belongs to $Uf(\Ome)$.}
\end{prop}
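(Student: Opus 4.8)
The plan is to run the Alexandrov reflection (moving plane) comparison for the potential $K_\Ome$. Unwinding the definition \eqref{uf}, it suffices to prove that if $x$ is a $k$-center then $x\cdot v\le l(v)$ for every $v\in S^{m-1}$. So I would fix $v$ and argue by contradiction, assuming $x\cdot v>l(v)$.

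First I would record the basic reflection identity. Pick any $b$ with $l(v)\le b<x\cdot v$, let $R=\Refl_{v,b}$, and set $x^\ast=R(x)$. By the defining property of $l(v)$ we have $R(\Ome^+_{v,b})\subset\Ome$, and since $R$ is a measure-preserving isometry this image has the same volume as $\Ome^+_{v,b}$ and lies in $\Ome\cap\{z:z\cdot v\le b\}$. Hence, up to the null set $\Ome\cap\{z:z\cdot v=b\}$, we may write $\Ome=A\sqcup A^\ast\sqcup B$ with $A=\Ome^+_{v,b}$, $A^\ast=R(A)$, and $B=(\Ome\cap\{z:z\cdot v\le b\})\setminus A^\ast$. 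Splitting $K_\Ome(x^\ast)$ over $A$, $A^\ast$, $B$, substituting $y\mapsto R(y)$ in the first two integrals (and using that $R$ is an isometry with $R(x^\ast)=x$), the $A$- and $A^\ast$-contributions combine into $\int_{A\cup A^\ast}k(|x-y|)\,dy$, whence
\[
K_\Ome(x^\ast)-K_\Ome(x)=\int_B\bigl(k(|x^\ast-y|)-k(|x-y|)\bigr)\,dy .
\]
For $y\in B$ one has $y\cdot v\le b<x\cdot v$, and the orthogonal splitting along $v$ gives $|x-y|^2-|x^\ast-y|^2=4\,(x\cdot v-b)(b-y\cdot v)\ge 0$, with equality only when $y\cdot v=b$. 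Since $k$ is strictly decreasing, the integrand above is nonnegative, and it is strictly positive on $B\cap\{z:z\cdot v<b\}$.

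It then remains to choose $b$ so that this last set is non-null. For $b\ge l(v)$ one has $|B|=|\Ome\cap\{z:z\cdot v\le b\}|-|\Ome\cap\{z:z\cdot v\ge b\}|$, and $|B|=0$ would force $R(\Ome)=\Ome$ up to a null set, i.e. $\Ome$ to be measure-theoretically symmetric across the hyperplane $\{z:z\cdot v=b\}$. This can hold for at most one value of $b$, because two distinct such values would make $\chi_\Ome$ invariant, up to null sets, under a nonzero translation parallel to $v$, which is impossible since $\Ome$ is bounded with positive volume. As the interval $[l(v),x\cdot v)$ is non-degenerate by our assumption, I may pick $b$ in it with $|B|>0$; then the displayed identity yields $K_\Ome(x^\ast)>K_\Ome(x)$, so $x$ is not a maximizer of $K_\Ome$, contradicting that $x$ is a $k$-center. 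Therefore $x\cdot v\le l(v)$ for every $v\in S^{m-1}$, i.e. $x\in Uf(\Ome)$.

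The main obstacle I expect is the bookkeeping with null sets — justifying the decomposition $\Ome=A\sqcup A^\ast\sqcup B$ and the change of variables rigorously — together with the observation that the strict inequality is only available after discarding the single ``symmetric'' height $b$; once these points are handled, the rest is the standard reflection estimate and is routine.
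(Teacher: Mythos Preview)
The paper does not give its own proof of this proposition; it is quoted from \cite{Sak} without argument. Your proof is correct and is exactly the Alexandrov reflection comparison that the paper's introduction points to when explaining the role of $Uf(\Ome)$: reflect $x$ to $x^\ast=\Refl_{v,b}(x)$ for some admissible height $b\in[l(v),x\cdot v)$, use the inclusion $\Refl_{v,b}(\Ome^+_{v,b})\subset\Ome$ to cancel the symmetric portions of the two integrals, and read off
\[
K_\Ome(x^\ast)-K_\Ome(x)=\int_B\bigl(k(\vert x^\ast-y\vert)-k(\vert x-y\vert)\bigr)\,dy\ge 0
\]
from the strict monotonicity of $k$ together with $\vert x-y\vert\ge\vert x^\ast-y\vert$ for $y\in B$. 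The one point that requires care---choosing $b$ so that the residual set $B$ has positive measure and the inequality becomes strict---you resolve neatly: $\vert B\vert=0$ forces $\Refl_{v,b}(\Ome)=\Ome$ up to a null set, and two distinct such heights would make $\chi_\Ome$ invariant (up to null sets) under a nonzero translation, impossible for a bounded body of positive volume. Since $[l(v),x\cdot v)$ contains more than one point, an admissible $b$ with $\vert B\vert>0$ exists. The null-set bookkeeping in the decomposition $\Ome=A\sqcup A^\ast\sqcup B$ and this ``at most one symmetric height'' observation are precisely the obstacles you anticipated, and your treatment of both is sound.
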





We give a relation between a body $\Ome$ and its minimal unfolded region $Uf(\Ome)$. The idea of the proof is due to \cite[Theorem 1]{BM}. To be precise, in \cite{BM}, Brasco and Magnanini studied geometry of the minimal unfolded region (heart) of a {\em convex} body, but their argument works for a {\em non-convex} body with slight modifications.

\begin{lem}\label{l_semicont}
Let $\Ome$ be a body in $\R^m$. The minimum folding height $l :S^{m-1} \to \R$ is lower semi-continuous.
\end{lem}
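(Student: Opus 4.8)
The plan is to follow the approach of \cite[Theorem 1]{BM} for the heart of a convex body and to note that it uses no convexity of $\Ome$. \emph{Step 1: reformulate $l$.} Write $P(v,b)$ for the property ``$\Refl_{v,b}\(\Ome^+_{v,b}\)\subseteq\Ome$''. Because $\Ome$ is bounded, $P(v,b)$ holds for all large $b$ (where $\Ome^+_{v,b}=\emptyset$) and fails for all sufficiently negative $b$ (where $\Ome^+_{v,b}=\Ome$ and $\Refl_{v,b}(\Ome)$ is disjoint from $\Ome$), and $\{a\in\R\mid\forall b\geq a,\ P(v,b)\}$ is upward closed; so it is the half-line $[l(v),+\infty)$. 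From this I would deduce the identity
\[
l(v)=\sup\left\{ b\in\R \;:\; \Refl_{v,b}\(\Ome^+_{v,b}\)\not\subseteq\Ome \right\},
\]
by two short verifications: every $b$ with $\neg P(v,b)$ satisfies $b<l(v)$ since $P(v,\cdot)$ holds on $[l(v),+\infty)$; and if the right-hand supremum were $<l(v)$, then $P(v,\cdot)$ would hold on a half-line starting strictly below $l(v)$, contradicting the minimality in the definition of $l(v)$. With this identity in hand, lower semi-continuity at a fixed $v\in S^{m-1}$ is exactly the statement that for every $c<l(v)$ there is a neighbourhood $U$ of $v$ in $S^{m-1}$ with $l(v')>c$ for all $v'\in U$.

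\emph{Step 2: find a bad reflecting plane and perturb it.} Fix $c<l(v)$. By the displayed identity there is $b_0>c$ with $\Refl_{v,b_0}\(\Ome^+_{v,b_0}\)\not\subseteq\Ome$, i.e.\ a point $z\in\Ome$ with $z\cdot v\geq b_0$ whose reflection $w:=\Refl_{v,b_0}(z)=z-2(z\cdot v-b_0)v$ lies in $\Ome^c$. The observation on which everything hinges is that necessarily $z\cdot v>b_0$ \emph{strictly}: otherwise $z$ would lie on the mirror hyperplane, forcing $w=z\in\Ome$. Since $\Ome$ is closed, $\Ome^c$ is open, so fix $\rho>0$ with $B(w,\rho)\subseteq\Ome^c$. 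The maps $v'\mapsto z\cdot v'$ and $v'\mapsto\Refl_{v',b_0}(z)=z-2(z\cdot v'-b_0)v'$ are continuous on $S^{m-1}$, so — using $z\cdot v>b_0$ — there is a neighbourhood $U$ of $v$ in $S^{m-1}$ such that, for every $v'\in U$, one has both $z\cdot v'>b_0$ and $\Refl_{v',b_0}(z)\in B(w,\rho)$. For such $v'$ the \emph{same} point $z$ lies in $\Ome^+_{v',b_0}$ while $\Refl_{v',b_0}(z)\notin\Ome$; hence $\Refl_{v',b_0}\(\Ome^+_{v',b_0}\)\not\subseteq\Ome$, and applying the displayed identity to $v'$ gives $l(v')\geq b_0>c$. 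As $c$ was an arbitrary number below $l(v)$ and $v$ an arbitrary direction, $l$ is lower semi-continuous on $S^{m-1}$.

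\emph{Expected difficulty.} The argument is entirely soft: no quantitative property of $\Ome$ is used, only that $\Ome$ is closed and bounded. Accordingly, I expect the one point needing genuine care to be Step 1 — the reformulation of $l$ as a supremum, together with the remark that a violating point lies strictly above its mirror hyperplane. That remark is precisely what lets the witnessing point $z$ \emph{and} the height $b_0$ be held fixed while only the direction is perturbed, so that no simultaneous perturbation of $b$, and hence no compactness or uniformity argument, is required. (One should not expect upper semi-continuity by the same reasoning, since a violating configuration may disappear under perturbation; only the one-sided statement is true in general.)
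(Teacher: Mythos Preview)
Your argument is correct and follows the same overall strategy as the paper: fix a height $b$ at which the folding condition fails for the direction $v$, and show by continuity that it still fails for all nearby directions $v'$, so that $\{v:l(v)>b\}$ is open. The difference lies in how the witnessing configuration is chosen. The paper first proves that the set $\Refl_{v,b}(\Ome^+_{v,b})\cap\Ome^c$ contains an \emph{interior} point, explicitly using that $\Ome$ is the closure of an open set; it then works with a small ball around the preimage $\xi$ to ensure that after perturbing the direction the ball still lies in $\Ome^+_{u,b}$ and reflects into $\Ome^c$. You bypass this entirely with the observation that any witnessing point $z$ automatically satisfies $z\cdot v>b_0$ strictly (since $z\in\Ome$ but $\Refl_{v,b_0}(z)\notin\Ome$ forces $z\neq\Refl_{v,b_0}(z)$), and this strict inequality already provides the margin needed to perturb $v$ while keeping the single point $z$ in $\Ome^+_{v',b_0}$. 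Your route is shorter and uses only that $\Ome$ is closed and bounded, whereas the paper's interior-point step genuinely invokes that $\Ome$ is a body; on the other hand the paper's version makes the robustness of the configuration more visibly geometric. Your Step~1 reformulation of $l(v)$ as a supremum is a clean way to organise the argument and is correct as stated.
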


\begin{proof}
We show that the set $\{ v \in S^{m-1} \vert l(v) >b \}$ is open in $S^{m-1}$ for any $b \in \R$. Fix an arbitrary $b \in \R$. Let $w \in S^{m-1}$ be a direction with $l(w)>b$. 

We first show that the non-empty intersection $\Refl_{w,b}(\Ome^+_{w,b}) \cap \Ome^c$ has an interior point. We take a point $x$ from the intersection. Since $\Ome^c$ is open in $\R^m$, there exists an $\ep_1 >0$ such that the $\ep_1$-neighborhood of $x$ is contained in $\Ome^c$. Since $\Refl_{w,b}(\Ome^+_{w,b})$ is the closure of an open set, $x$ is in its interior or in its boundary. We only consider the latter case. We can choose a point $x'$ from the $\ep_1$-neighborhood of $x$ such that $x' \in (\Refl_{w,b}(\Ome^+_{w,b}))^\circ$. There exists an $\ep_2 >0$ such that the $\ep_2$-neighborhood of $x'$ is contained in the interior of $\Refl_{w,b}(\Ome^+_{w,b}) \cap B_{\ep _1}(x)$. Hence the $\ep_2$-neighborhood of $x'$ is contained in the intersection $\Refl_{w,b}(\Ome^+_{w,b}) \cap \Ome^c$, that is, $x'$ is an interior point of the intersection.

Next, we complete the proof. Let $x$ be an interior point of $\Refl_{w,b}(\Ome^+_{w,b}) \cap \Ome^c$, and $\ep$ be a positive constant such that $B_\ep (x) \subset \Refl_{w,b}(\Ome^+_{w,b}) \cap \Ome^c$. Let $\xi = \Refl_{w,b}^{-1} (x)$, then the $\ep$-neighborhood of $\xi$ is contained in $\Ome^+_{w,b}$. The continuity of the map
\[
S^{m-1} \ni u \mapsto \Refl_{u,b} (\xi ) = \xi +2 (b-\xi \cdot u )u \in \R^m
\]
implies the existence of a positive constant $\de$ such that, for any $u \in B_\de (w) \cap S^{m-1}$, the ball $B_{\ep /2} (\xi )$ is contained in $\Ome_{u,b}^+$, and we have
\[
\Refl_{u,b}\( B_{\ep /2}(\xi ) \) \subset B_\ep (x) \subset \Ome^c ,
\]
which completes the proof.
\end{proof}

For a direction $v \in S^{m-1}$, we denote the orthogonal complement vector space by $v^\perp$, that is, we let
\begin{equation}
v^\perp = \left\{ \left. z \in \R^m \rvert z \cdot v =0 \right\} .
\end{equation} 
We understand that $\Ome$ is convex in a direction $v$ if the intersection $\Ome \cap (\Span \langle v \rangle + z)$ is connected for any point $z$ in $v^\perp$.

\begin{prop}\label{uf_symm}
Let $\Ome$ be a body in $\R^m$.
\begin{enumerate}[$(1)$]
\item If there exist $p$ $(1 \leq p \leq m)$ independent directions $v_1,\ldots ,v_p \in S^{m-1}$ such that $\Ome$ is symmetric in the hyperplanes $v_1^\perp,\ldots ,v_p^\perp$ and convex in the directions $v_1 ,\ldots ,v_p$, then we have
\[
Uf(\Ome )\subset \bigcap_{j=1}^{p} v_j^\perp .
\]
\item If the dimension of the minimal unfolded region of $\Ome$ is $p$ $(0 \leq p \leq m-1)$, then there exists a direction $w \in S^{m-1}$ orthogonal to $Uf(\Ome )$ such that $\Ome$ is symmetric in a hyperplane parallel to $w^\perp$ and convex in $w$.
\end{enumerate}
\end{prop}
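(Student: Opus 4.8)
The plan is to use the definition of $Uf(\Ome)$ via the minimum folding heights $l(v)$, together with the symmetry and convexity hypotheses, to control $l$ on a suitable set of directions.

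For part (1), fix one of the symmetry directions $v=v_j$. The key claim is that $l(v)\le 0$ and $l(-v)\le 0$ simultaneously, which forces $Uf(\Ome)\subset v^\perp$ by \eqref{uf} (intersecting the half-spaces $\{z\cdot v\le l(v)\}$ and $\{z\cdot(-v)\le l(-v)\}$ gives $\{z\cdot v=0\}$). To see $l(v)\le 0$: since $\Ome$ is symmetric in $v^\perp$, i.e. $\Refl_{v,0}(\Ome)=\Ome$, for any $b\ge 0$ the reflected piece $\Refl_{v,b}(\Ome^+_{v,b})$ can be compared with $\Ome^-_{v,b}:=\{z\in\Ome\mid z\cdot v\le b\}$. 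Here I would exploit convexity in the direction $v$: for a fixed $z\in v^\perp$, the slice $\Ome\cap(\Span\langle v\rangle+z)$ is a connected segment (an interval), and by symmetry it is centered at $z$; reflecting the upper part of this interval in height $b\ge 0$ lands inside the lower part. Doing this sliceweise over all $z\in v^\perp$ shows $\Refl_{v,b}(\Ome^+_{v,b})\subset\Ome$ for every $b\ge 0$, hence $l(v)\le 0$. The same argument with $-v$ gives $l(-v)\le 0$. Iterating over $j=1,\dots,p$ and intersecting yields $Uf(\Ome)\subset\bigcap_{j=1}^p v_j^\perp$. The independence of the $v_j$ is used only to guarantee that the intersection of the hyperplanes has the expected codimension $p$.

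For part (2), suppose $\dim Uf(\Ome)=p\le m-1$. Since $Uf(\Ome)$ is compact and convex (Remark \ref{rem_uf}(3)) and of dimension $p$, it lies in an affine $p$-plane, so there is a direction $w\in S^{m-1}$ orthogonal to it; after translating coordinates we may assume $Uf(\Ome)$ is contained in a hyperplane parallel to $w^\perp$, say $\{z\cdot w=c\}$. From \eqref{uf}, being contained in $\{z\cdot w\le l(w)\}$ and $\{z\cdot(-w)\le l(-w)\}$ with these two half-spaces having intersection exactly the slab boundary, we get $l(w)=c=-l(-w)$; normalize so that $c=l(w)=l(-w)=0$. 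Now I want to upgrade the equality $l(w)=l(-w)=0$ into genuine symmetry and convexity of $\Ome$ in $w$. The point is that at the critical height $b=l(w)=0$, by definition $\Refl_{w,0}(\Ome^+_{w,0})\subset\Ome$, and symmetrically $\Refl_{w,0}(\Ome^-_{w,0})\subset\Ome$; combining these two inclusions (each reflected set contains the other's complementary half) forces $\Refl_{w,0}(\Ome^+_{w,0})=\Ome^-_{w,0}$, i.e. $\Ome$ is symmetric in $w^\perp$. For convexity in the direction $w$: if some slice $\Ome\cap(\Span\langle w\rangle+z)$ were disconnected, one could fold at a slightly negative height and still stay inside $\Ome$ (a "gap" in the slice lets the reflection slip through), contradicting minimality $l(w)=0$; equivalently one argues that $l(w)<0$ would follow, again a contradiction. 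So $\Ome$ is convex in $w$ as well.

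The main obstacle I expect is the slice-by-slice argument and its interaction with the closure operations: $\Ome^+_{v,b}$ and its reflection are closures of open sets, so set-theoretic inclusions have to be handled on interiors first and then closed up, exactly as in the proof of Lemma \ref{l_semicont}; and in part (2) the implication "disconnected slice $\Rightarrow l(w)<0$" needs care because a single bad slice might be exceptional on a measure-zero set of $z\in w^\perp$, so one should phrase the convexity conclusion in terms of the fact that the folded image at a slightly smaller height would still be contained in $\Ome$, using lower semicontinuity of $l$ (Lemma \ref{l_semicont}) to keep the argument robust. Once these topological subtleties are dispatched, both parts follow from the definition of $Uf(\Ome)$ fairly directly.
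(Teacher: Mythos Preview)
Your Part (1) is correct and essentially the paper's argument: the paper phrases it as a contradiction (assume $l(v_j)>0$, find a point whose reflection lands outside $\Ome$, then use symmetry and directional convexity to trap it inside), while you argue directly slice by slice; the content is the same.

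Part (2) has a genuine gap in the choice of $w$. You pick an arbitrary $w$ orthogonal to the affine span of $Uf(\Ome)$ and then claim that the two half-spaces $\{z\cdot w\le l(w)\}$ and $\{z\cdot(-w)\le l(-w)\}$ must intersect in the hyperplane $\{z\cdot w=c\}$, giving $l(w)=c=-l(-w)$. This does not follow: $Uf(\Ome)$ is the intersection over \emph{all} directions in \eqref{uf}, so the squeezing of $Uf(\Ome)$ into a $p$-plane may be achieved by directions other than $\pm w$, leaving the slab $\{-l(-w)\le z\cdot w\le l(w)\}$ with positive width. Concretely, take $\Ome$ an equilateral triangle: then $Uf(\Ome)$ is a single point, every direction $w$ is orthogonal to it, and your argument would force $\Ome$ to be symmetric in every hyperplane through that point, hence a disc. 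The paper avoids this by choosing $w$ differently: after translating so that the centroid of $\Ome$ (which lies in $Uf(\Ome)$) is at the origin, it uses the lower semi-continuity of $l$ (Lemma \ref{l_semicont}) to find a direction $w$ where $l$ attains its minimum, and shows this minimum is $0$ (otherwise $Uf(\Ome)$ would contain a ball). Symmetry is then obtained not from $l(-w)=0$ but from the centroid identity $\int_\Ome y\cdot w\,dy=0$: since $\Refl_{w,0}(\Ome^+_{w,0})\subset\Ome$, any leftover piece $\Ome\setminus(\Ome^+_{w,0}\cup\Refl_{w,0}(\Ome^+_{w,0}))$ lies in $\{y\cdot w<0\}$ and would make the integral strictly negative. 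Orthogonality of $w$ to $Uf(\Ome)$ then comes out \emph{a posteriori} via Part (1).

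Your convexity argument is also stated backwards. A disconnected slice does not let you fold at a \emph{negative} height and stay inside $\Ome$; rather, it produces points $x\in\Ome$ and $\xi\in\Ome^c$ on the same $w$-line with (after using the symmetry just established) $(x+\xi)\cdot w>0$, so that at height $b=\tfrac12(x+\xi)\cdot w>0$ one has $\Refl_{w,b}(x)=\xi\notin\Ome$, contradicting $l(w)=0$. This is the paper's argument.
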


\begin{proof}
(1) We remark that $l( v_j ) =l( -v_j )$ for any $1 \leq j \leq p$. Let us show $l( v_j )=0$ for any $1 \leq j \leq p$, which implies 
\[
Uf(\Ome ) 
\subset \bigcap_{j=1}^{p} \( \left\{ z \in \R^m \lvert z \cdot v_j \leq l\( v_j \) \right\} \right. \cap \left\{ z \in \R^m \lvert z \cdot \( -v_j \) \leq l\( -v_j \) \right\} \right. \)
= \bigcap_{j=1}^{p} v_j^\perp .
\] 

Suppose that we can choose a number $j$ with $l( v_j ) >0$. There exists a height $b$ ($0< b < l( v_j )$) such that $\Refl_{v_j ,b} ( \Ome_{v_j,b}^+) \cap \Ome^c \neq \emptyset$. We can choose a point $x \in \Ome_{v_j,b}^+$ such that $x' =\Refl_{v_j ,b} (x) \in \Ome^c$. 

On the other hand, from the symmetry of $\Ome$ in the hyperplane $v_j^\perp$, we have $x''=\Refl_{v_j ,0} (x) \in \Ome$. By the convexity of $\Ome$ for $v_j$, we have
\[
\emptyset \neq \overline{xx'} \cap \Ome^c \subset \overline{xx''} \cap \Ome^c \subset \Ome \cap \Ome^c =\emptyset ,
\] 
which is a contradiction.

(2) Since $Uf(\Ome )$ is compact and convex, we may assume that $Uf(\Ome )$ is contained in the $p$-dimensional vector space $\R^p \times \{ 0 \}^{m-p} \subset \R^p \times \R^{m-p} = \R^m$. By a translation of $\R^p \times \{ 0 \}^{m-p}$, we also may assume that the centroid of $Uf(\Ome )$, denoted by $G_\Ome$, coincides with the origin. 

We first show that the minimum value of $l$ is zero. Suppose that $l(v)$ is positive for any $v \in S^{m-1}$. By the lower semi-continuity of $l$, we have
\[
\rho = \inf_{v \in S^{m-1}} l(v)  = \min_{v \in S^{m-1}} l(v)>0 .
\] 
Then the $m$-dimensional ball $B_{\rho}(0)$ is contained in $Uf(\Ome )$, which is a contradiction. Hence there exists a direction $w \in S^{m-1}$ such that $l(w) =0$. 

In order to show the symmetry of $\Ome$ with respect to the hyperplane orthogonal to $w$, we show that $\Ome$ is the union of $\Ome^+_{w,0}$ and $\Refl_{w,0} (\Ome^+_{w,0} )$. Suppose that the set minus $\Ome \sm ( \Ome^+_{w,0} \cup \Refl_{w,0} ( \Ome^+_{w,0} ) )$ is not empty. Since $\Ome$ is a body, $\Ome \sm ( \Ome^+_{w,0} \cup \Refl_{w,0} ( \Ome^+_{w,0} ) )$ has an interior point. By the reflection argument, we have
\[
0> \int_{\Ome \sm \( \Ome^+_{w,0} \cup \Refl_{w,0} \( \Ome^+_{w,0} \) \)} y \cdot w dy 
=\int_\Ome y\cdot w dy
=\Vol (\Ome ) G_\Ome \cdot w
=0,
\] 
which is a contradiction. Hence $\Ome$ is symmetric in the hyperplane $w^\perp$.

Finally, we show the convexity of $\Ome$ in the direction $w$. We assume the existence of two points $x$ and $x'$ in $\Ome$ such that the line segment $\overline{xx'}$ is parallel to the vector space $\Span \langle w \rangle$ and contains a point $\xi$ in $\Ome^c$. We may assume $(x + \xi )\cdot w >0$. Let $b = (( x+\xi ) \cdot w)/2$. Then, we have $\xi = \Refl_{w,b} (x)$, which contradicts to $l(w)=0 <b$.

Furthermore, the first assertion implies that the direction $w$ is orthogonal to the minimal unfolded region of $\Ome$.
\end{proof}
\section{Examples of the graphs}
Let $\Ome$ be a body in $\R^m$ ($m \geq 2$) with a piecewise $C^1$ boundary. In this section, in order to investigate the number of $k$-centers of $\Ome$, using the calculator Maple, we produce some examples of the graphs of the $r^{\al -m}$-potentials
\begin{equation}
V_\Ome^{(\al )}(x)=
\begin{cases}
\ds \sign (m-\al ) \int_\Ome r^{\al -m} dy &(0 < \al \neq m),\\
\ds -\int_\Ome \log r dy &(\al =m ),
\end{cases}
\end{equation}
and its second derivatives. When we use Maple to draw the graph of the $r^{\al -m}$-potential, it is useful to use the boundary integral expression,
\begin{equation}
V_\Ome^{(\al )} (x) =
\begin{cases}
\ds - \frac{\sign (m-\al )}{\al} \int_{\pd \Ome} r^{\al -m} (x-y) \cdot n(y) d\sigma (y) &(0< \al \neq m),\\
\ds \frac{1}{m} \int_{\pd \Ome} \( \log r -\frac{1}{m} \) (x-y) \cdot n(y) d\sigma (y) & (\al =m) ,
\end{cases}
\end{equation}
for any $x \in \R^m \sm \pd \Ome$ (\cite[Theorem 2.8]{O1}).

\begin{ex}\label{ex_discs}
{\rm Let $m=2$ and $\Ome =\{ ( y_1 ,y_2) \vert (y_1 +1)^2 + y_2^2 \leq 1\} \cup \{ (y_1 ,y_2) \vert (y_1 -1)^2 + y_2^2 \leq 1\}$. Then we have
\begin{align*}
\pd \Ome &= \left\{ (\cos \theta -1 ,\sin \theta ) \lvert 0\leq \theta \leq 2\pi \right\} \right. \cup \left\{ (\cos \theta +1 ,\sin \theta ) \lvert 0\leq \theta \leq 2\pi \right\} \right. ,\\ 
Uf(\Ome )&=\left\{ \( y_1 ,0 \) \lvert -1 \leq y_1 \leq 1 \right\} \right. , \\
V_\Ome^{(\al )} (\la , 0) &= -\frac{1}{\al} \int_0^{2\pi} \( \( \la -\cos \theta +1 \)^2 +\sin^2 \theta \)^{(\al -2)/2} \( \( \la +1 \) \cos \theta -1 \) d\theta  \\
&\quad -\frac{1}{\al} \int_0^{2\pi} \( \( \la -\cos \theta -1 \)^2 +\sin^2 \theta \)^{(\al -2)/2} \( \( \la -1 \) \cos \theta -1 \) d\theta , 
\end{align*}
and the graph of the potential $V_\Ome^{(3/2)} (\la ,0)$ for $-1 \leq \la \leq 1$ is Figure \ref{discs}. Hence, in this case, $\Ome$ has two $r^{-1/2}$-centers.}
\end{ex}

\begin{ex}\label{ex_annulus}
{\rm Let $m=2$ and $\Ome =\{ (y_1 ,y_2) \vert 1 \leq y_1^2 + y_2^2 \leq 4 \}$. Then we have
\begin{align*}
\pd \Ome &= \left\{ ( 2 \cos \theta ,2 \sin \theta ) \lvert 0 \leq \theta \leq 2\pi \right\} \right. \cup \( -\left\{ ( \cos \theta ,\sin \theta ) \lvert 0 \leq \theta \leq 2\pi \right\} \right. \) ,\\
Uf(\Ome )&=\left\{ \( y_1 ,y_2 \) \lvert y_1^2 +y_2^2 \leq \frac{9}{4} \right\} \right. ,\\
V_\Ome^{(\al )} (\la ,0) &= \frac{1}{\al} \int_0^{2\pi} \( \la^2 -2\la \cos \theta + 1 \)^{(\al -2)/2} \( \la \cos \theta -1 \) d\theta \\
&\quad -\frac{2}{\al} \int_0^{2\pi} \( \la^2 -4 \la \cos \theta +4 \)^{(\al -2)/2} \( \la \cos \theta -2 \) d\theta ,
\end{align*}
and the graph of the potential $V_\Ome^{(3/2)} (\la ,0)$ for $-3/2 \leq \la \leq 3/2$ is Figure \ref{annulus}. Hence, in this case, the set of $r^{-1/2}$-centers of $\Ome$ is a circle.}
\end{ex}

\begin{ex}[{\cite[Remark 3.13]{O1}}]\label{ex_triangle}
{\rm Let $m=2$ and $\Ome = \{ (y_1 ,y_2) \vert 0 \leq y_1 \leq 1,\ 0\leq \vert y_2 \vert \leq (\tan (\pi/10)) y_1 \}$. Then we have
\begin{align*}
\pd \Ome &= \left\{ \( y_1 ,y_2 \) \lvert 0\leq y_1 \leq 1, \ y_2 = - \( \tan \frac{\pi}{10} \) y_1 \right\} \right. \cup \left\{ \( 1, y_2\) \lvert -\tan \frac{\pi}{10} \leq y_2 \leq \tan \frac{\pi}{10} \right\} \right. ,\\
&\quad \cup \(- \left\{ \( y_1 ,y_2 \) \lvert 0\leq y_1 \leq 1, \ y_2 = \( \tan \frac{\pi}{10} \) y_1 \right\} \right. \) ,\\
\( \frac{1}{2},0 \) &\in Uf(\Ome ) \subset \left\{ \( y_1 ,0 \) \lvert \frac{1}{2} \leq y_1 \leq 1 \right\} \right. ,\\
\frac{\pd^2 V_\Ome^{(\al )}}{\pd x_1^2} (\la ,0) &= -2(2-\al )\tan \frac{\pi}{10} \int_0^1 \( \( \la -t \)^2 + \( \( \tan \frac{\pi}{10} \) t\)^2 \)^{(\al -4)/2} \( \la -t \) dt\\
&\quad +2(2-\al ) (\la -1) \int_0^{\tan \frac{\pi}{10}} \( (\la -1)^2 +t^2 \)^{(\al -4)/2} dt ,
\end{align*}
and the graph of the second derivative of the potential $V_\Ome^{(3/2)} (\la ,0)$ for $0\leq \la \leq 1$ is Figure \ref{isosceles1}. Moreover, the contribution of the slopes to the boundary integral (the first integral) is Figure \ref{isosceles2}. Hence, in this case, $\Ome$ has a unique $r^{-1/2}$-center.}
\end{ex}

\begin{ex}\label{ex_cone}
{\rm Let $m=3$ and $\Ome = \{ (y_1 ,y_2,y_3) \vert 0 \leq y_1 \leq 1,\ y_2^2 + y_3^2 \leq (\tan^2 (\pi /10)) y_1^2 \}$. Then we have
\begin{align*}
\pd \Ome &= \left. \left\{ \( t, \( \tan \frac{\pi}{10} \) t \cos \theta , \( \tan \frac{\pi}{10} \) t \sin \theta \) \rvert 0\leq t \leq 1,\ 0\leq \theta \leq 2\pi \right\}  \\
&\quad \cup \left\{ (1, r \cos \theta , r\sin \theta ) \lvert 0 \leq r \leq \tan \frac{\pi}{10}, \ 0 \leq \theta \leq 2 \pi \right\} \right. ,\\
\( \frac{1}{2},0,0\) &\in Uf(\Ome ) \subset \left\{ \( y_1 ,0,0 \) \lvert \frac{1}{2} \leq y_1 \leq 1 \right\} \right. ,\\
\frac{\pd^2 V_\Ome^{(\al )}}{\pd x_1^2}(\la ,0,0) &= -2\pi (3-\al ) \tan^2 \frac{\pi}{10} \int_0^1 \( \( \la -t \)^2 +\( \( \tan \frac{\pi}{10} \) t \)^2 \)^{(\al -5)/2} (\la -t )tdt \\
&\quad + 2\pi (3-\al ) (\la -1) \int_0^{\tan \frac{\pi}{10}} \( \( \la -1 \)^2 +r^2 \)^{(\al -5)/2} rdr ,
\end{align*}
and the graph of the second derivative of the potential $V_\Ome^{(5/2)} (\la ,0,0)$ for $0\leq \la \leq 1$ is Figure \ref{cone1}. Moreover, the contribution of the side to the boundary integral (the first integral) is Figure \ref{cone2}. Hence, in this case, $\Ome$ has a unique $r^{-1/2}$-center.}
\end{ex} 
\begin{figure}[htbp]
\begin{center}
\begin{minipage}[h]{0.45\linewidth}
\scalebox{0.4}{\includegraphics[clip]{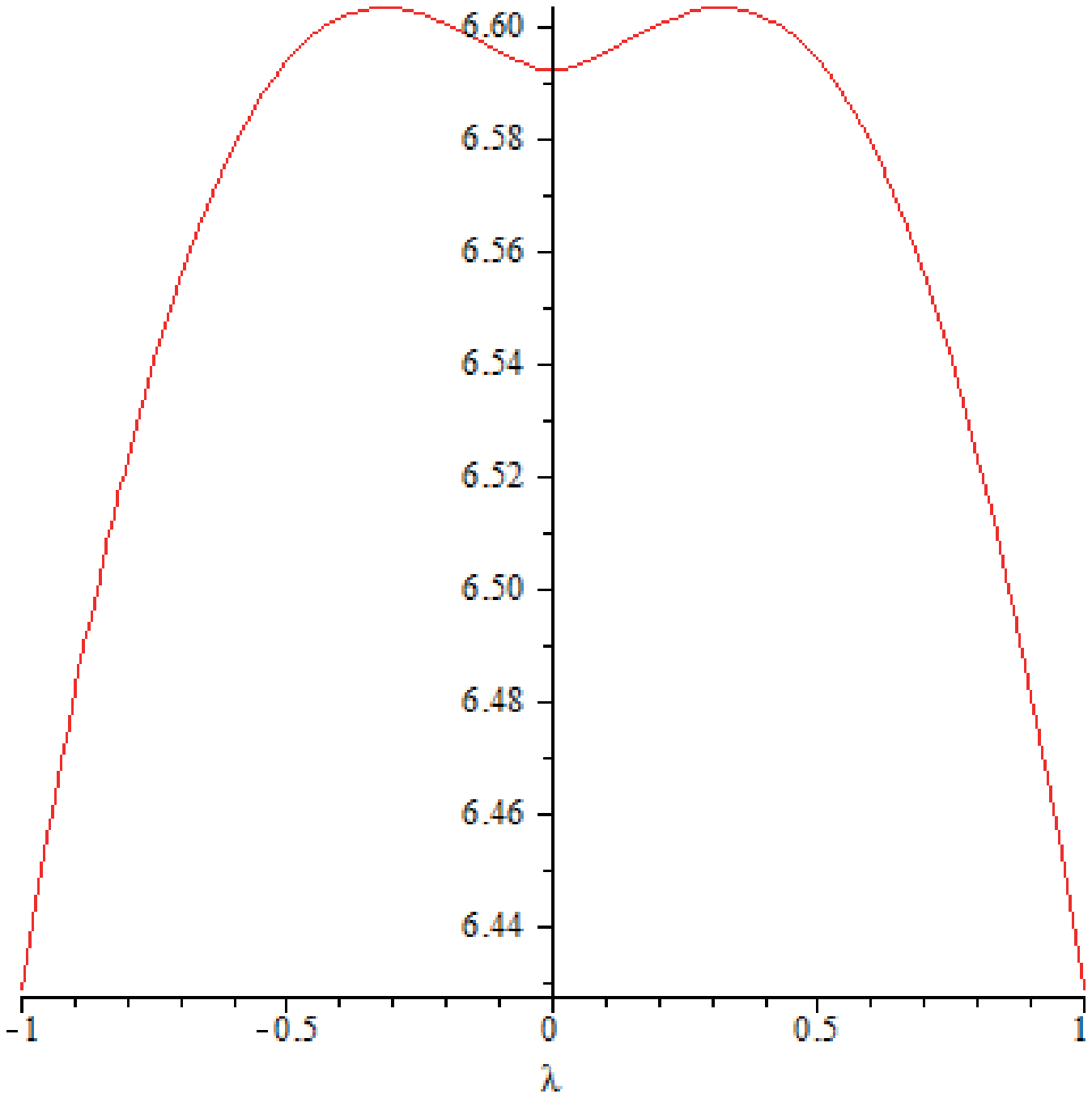}}
\caption{The graph of $V_\Ome^{(3/2)} (\la ,0)$ when $\Ome$ is the union of two discs}
\label{discs}
\end{minipage}
\hspace{0.01\linewidth}
\begin{minipage}[h]{0.45\linewidth}
\scalebox{0.4}{\includegraphics[clip]{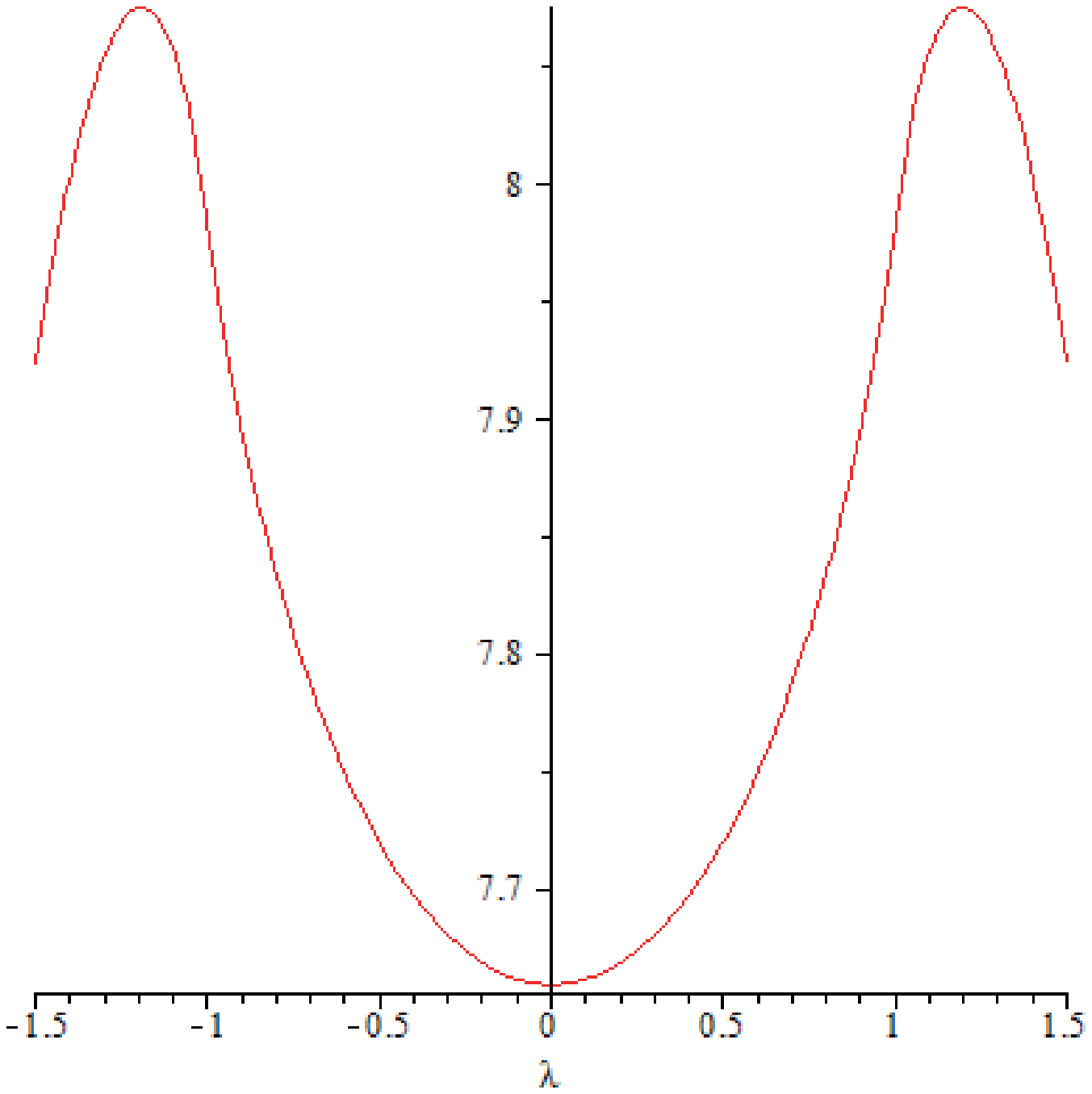}}
\caption{The graph of $V_\Ome^{(3/2)} ( \la ,0)$ when $\Ome$ is an annulus}
\label{annulus}
\end{minipage}
\begin{minipage}[h]{0.45\linewidth}
\scalebox{0.4}{\includegraphics[clip]{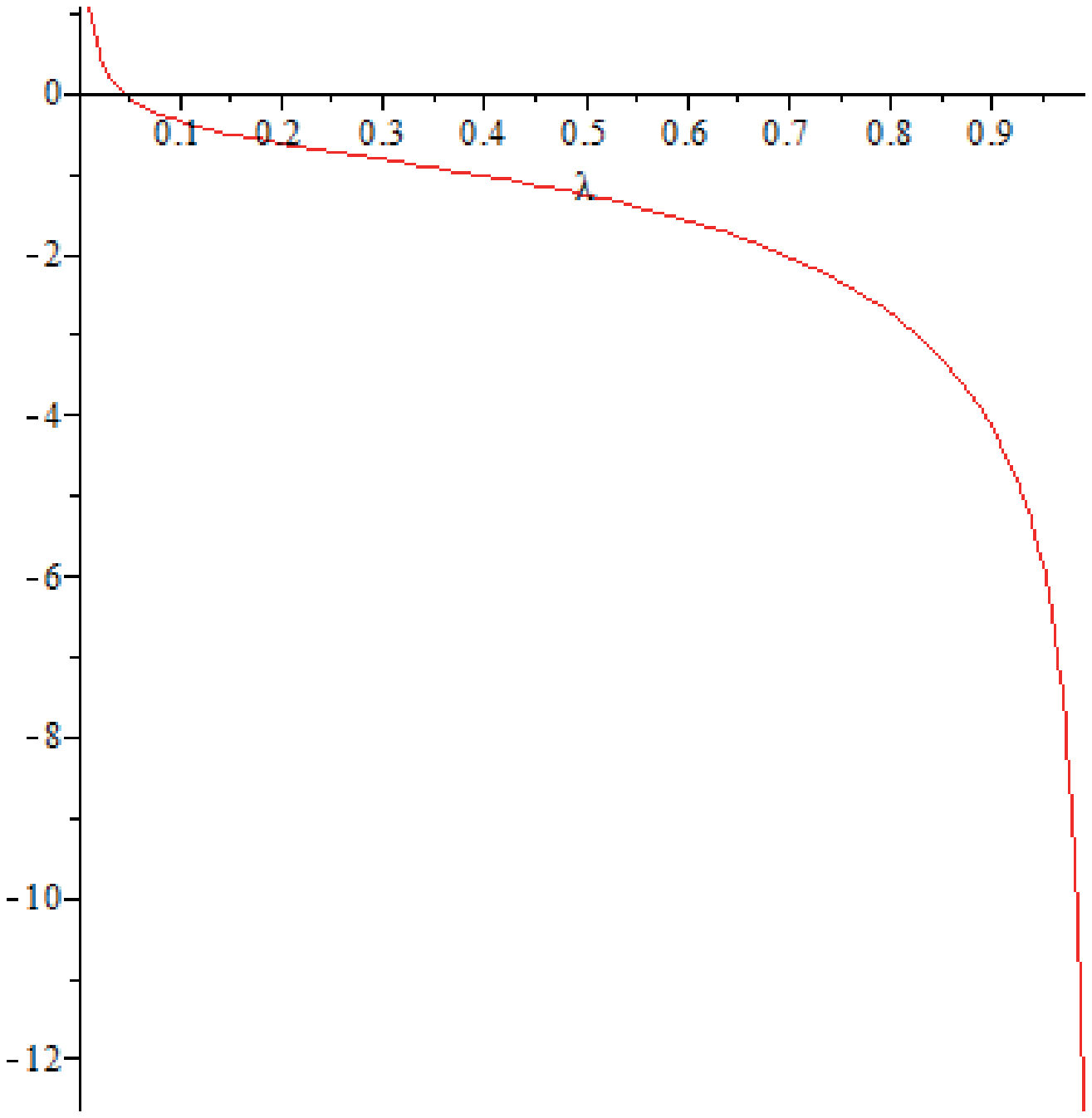}}
\caption{The graph of $(\pd^2 V_\Ome^{(3/2)} / \pd x_1^2) (\la ,0)$ when $\Ome$ is an isosceles triangle}
\label{isosceles1}
\end{minipage}
\hspace{0.01\linewidth}
\begin{minipage}[h]{0.45\linewidth}
\scalebox{0.4}{\includegraphics[clip]{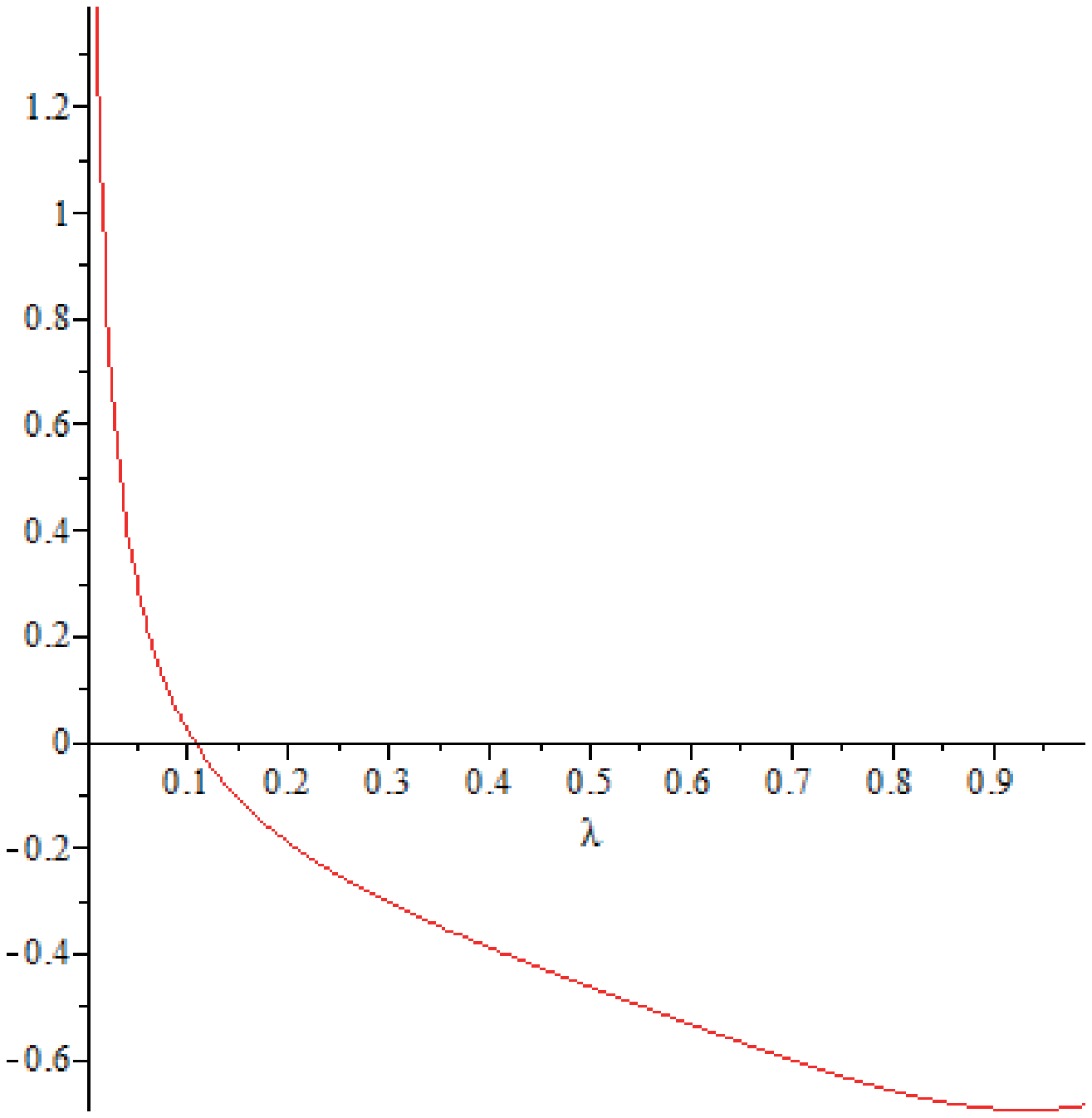}}
\caption{The contribution of the slopes to the boundary integral}
\label{isosceles2}
\end{minipage}
\begin{minipage}[h]{0.45\linewidth}
\scalebox{0.4}{\includegraphics[clip]{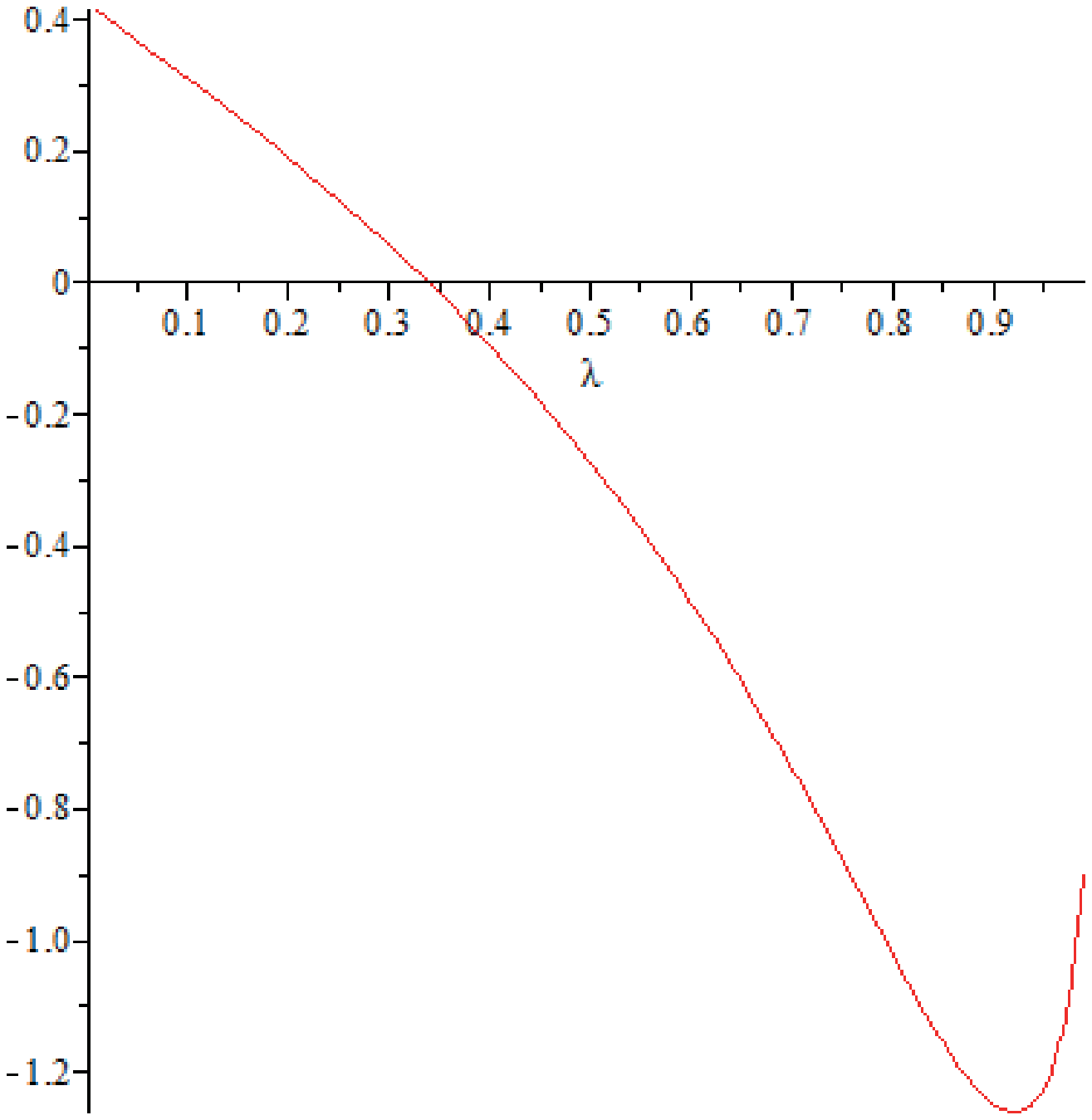}}
\caption{The graph of $(\pd^2 V_\Ome^{(5/2)} / \pd x_1^2) (\la ,0)$ when $\Ome$ is a cone}
\label{cone1}
\end{minipage}
\hspace{0.01\linewidth}
\begin{minipage}[h]{0.45\linewidth}
\scalebox{0.4}{\includegraphics[clip]{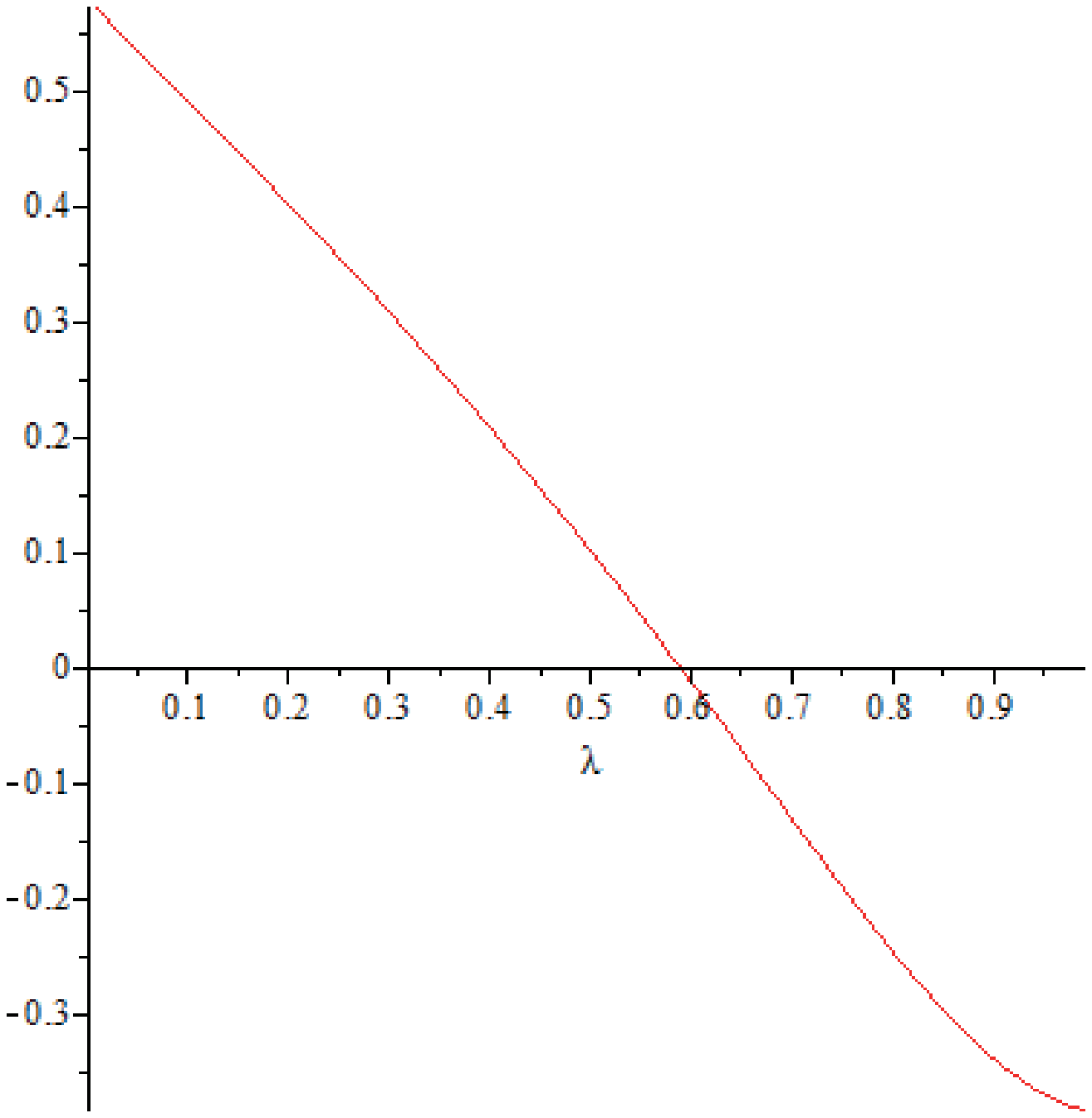}}
\caption{The contribution of the side to the boundary integral}
\label{cone2}
\end{minipage}
\end{center}
\end{figure}

\begin{ex}\label{ex_arcon}
{\rm Let $m=3$ and $\Ome = \{( y_1 ,y_2,y_3) \vert 0 \leq y_1 \leq 1,\ y_2^2 + y_3^2 \leq (\tan^2 (\pi /10)) y_1\}$. Then we have
\begin{align*}
\pd \Ome &= \left. \left\{ \( t, \( \tan \frac{\pi}{10} \) \sqrt{t} \cos \theta , \( \tan \frac{\pi}{10} \) \sqrt{t} \sin \theta \) \rvert 0\leq t \leq 1,\ 0\leq \theta \leq 2\pi \right\}  \\
&\quad \cup \left\{ (1, r \cos \theta , r\sin \theta ) \lvert 0 \leq r \leq \tan \frac{\pi}{10}, \ 0 \leq \theta \leq 2 \pi \right\} \right. ,\\
\( \frac{1}{2},0,0\) &\in Uf(\Ome ) \subset \left\{ \( y_1 ,0,0 \) \lvert \frac{1}{2} \leq y_1 \leq 1 \right\} \right. ,\\
\frac{\pd^2 V_\Ome^{(\al )}}{\pd x_1^2}(\la ,0,0) &= -\pi (3-\al ) \tan^2 \frac{\pi}{10} \int_0^1 \( \( \la -t \)^2 + \( \tan^2 \frac{\pi}{10} \) t \)^{(\al -5)/2} (\la -t )dt \\
&\quad + 2\pi (3-\al ) (\la -1) \int_0^{\tan \frac{\pi}{10}} \( \( \la -1 \)^2 +r^2 \)^{(\al -5)/2} rdr ,
\end{align*}
and the graph of the second derivative of the potential $V_\Ome^{(5/2)} (\la ,0,0)$ for $0\leq \la \leq 1$ is Figure \ref{arcon1}. Moreover, the contribution of the side to the boundary integral (the first integral) is Figure \ref{arcon2}. Hence, in this case, $\Ome$ has a unique $r^{-1/2}$-center.}
\end{ex}
\begin{figure}[htbp]
\begin{center}
\begin{minipage}[h]{0.45\linewidth}
\scalebox{0.4}{\includegraphics[clip]{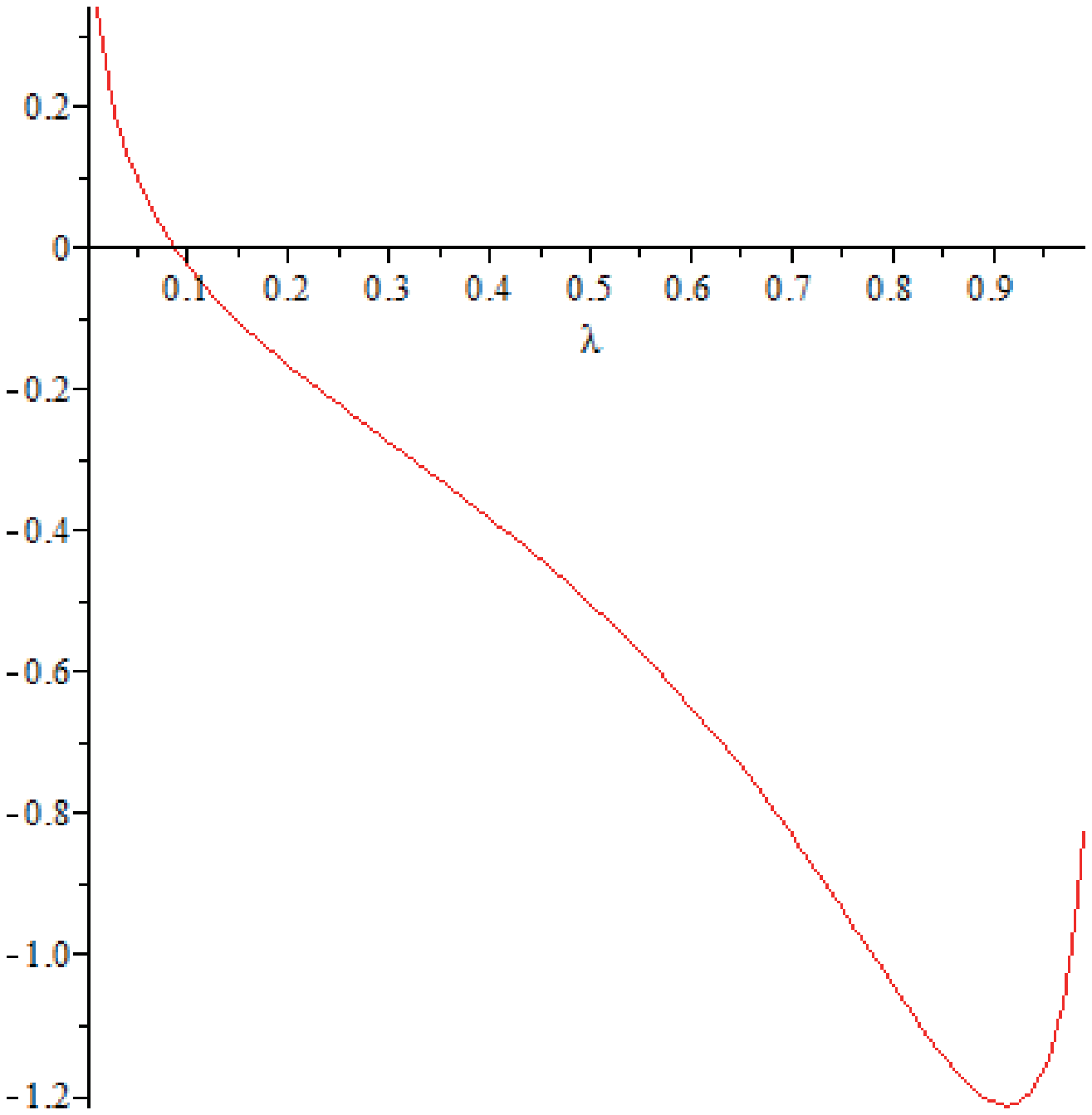}}
\caption{The graph of $(\pd^2 V_\Ome^{(5/2)} / \pd x_1^2) (\la ,0)$ when $\Ome$ is a solid of revolution of a parabola}
\label{arcon1}
\end{minipage}
\hspace{0.01\linewidth}
\begin{minipage}[h]{0.45\linewidth}
\scalebox{0.4}{\includegraphics[clip]{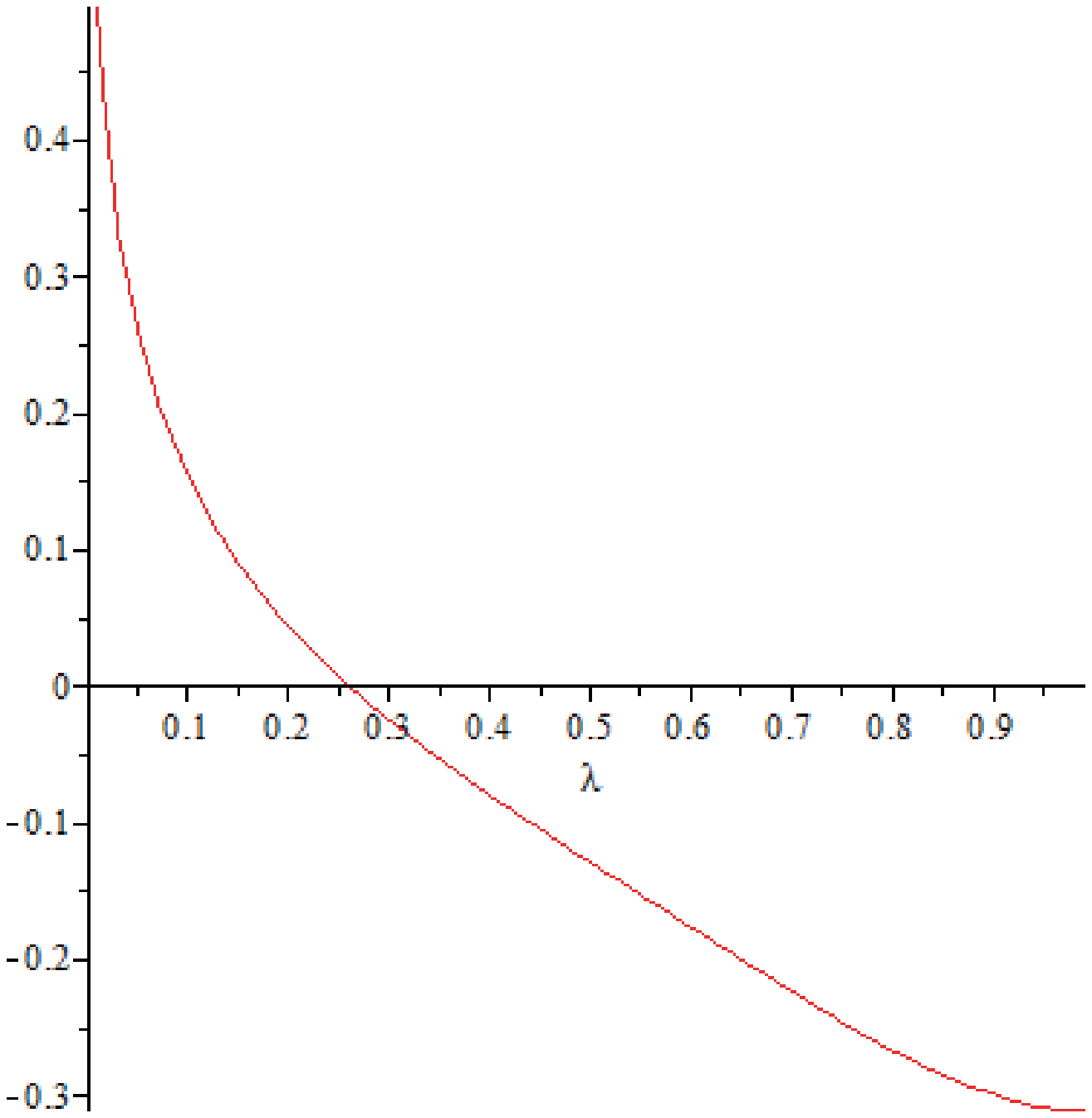}}
\caption{The contribution of the side to the boundary integral}
\label{arcon2}
\end{minipage}
\end{center}
\end{figure}

\section{Uniqueness of a {\boldmath $k$}-center}
Let $\Ome$ be a body in $\R^m$. In this section, we investigate the uniqueness of a $k$-center of $\Ome$. Put 
\begin{equation}
d(\Ome ) = \min \left\{ \vert z - w \vert \lvert z \in Uf(\Ome ), \ w \in \pd \Ome \right. \right\}, \ D(\Ome ) = \max \left\{ \vert z - w \vert \lvert z \in Uf(\Ome ), \ w \in \pd \Ome \right. \right\}.
\end{equation} 
\subsection{Uniqueness of a center of a suitable axially symmetric body}

\begin{thm}\label{concavity_revolution} 
Let $\ome :[0,1] \to [0,+\infty )$ be a piecewise $C^1$ function such that the function $\ome^{m-1} : t \mapsto \ome(t)^{m-1}$ is concave. Let 
\[
\Ome = \left\{ y= \( y_1 ,\bar{y} \) \in \R \times \R^{m-1} \lvert 0\leq y_1 \leq 1,\ \lvert \bar{y} \rvert \leq \ome \( y_1 \) \right\} \right. .
\]
Suppose that the kernel $k$ is strictly decreasing and satisfies the condition $(C^1_\al )$ for some $\al >1$. If $k'(r)/r$ is increasing on the interval $( d( \Ome ) ,D( \Ome ) )$, then the potential $K_\Ome$ is strictly concave on the minimal unfolded region.
\end{thm}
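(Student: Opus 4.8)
The plan is to reduce the question to a one-dimensional concavity statement along the axis of revolution. By Proposition \ref{uf_symm}(1), applied with the $m-1$ independent directions orthogonal to the $y_1$-axis (in each of which $\Ome$ is symmetric and, by connectedness of the slices $\{\,|\bar y|\le\ome(y_1)\,\}$, convex), the minimal unfolded region $Uf(\Ome)$ is contained in the $y_1$-axis $\{(y_1,0,\dots,0)\}$. Hence it suffices to show that the single-variable function $\la\mapsto K_\Ome(\la,0,\dots,0)$ is strictly concave on the segment $Uf(\Ome)\subset\{0\le\la\le 1\}$. By Proposition \ref{regularity}(3), since $\pd\Ome$ is piecewise $C^1$ and $k$ satisfies $(C^1_\al)$, the potential $K_\Ome$ is $C^2$ off $\pd\Ome$, and as $Uf(\Ome)$ lies in the interior of $\Ome$ (a point of the axis at height $0\le\la\le 1$ is interior provided $\ome>0$ there; the endpoints and any zeros of $\ome$ must be ruled out or handled by continuity), it is enough to prove $(\pd^2 K_\Ome/\pd x_1^2)(\la,0,\dots,0)<0$ for all such $\la$.

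The next step is to write this second derivative explicitly. Using the boundary-integral formula of Proposition \ref{regularity}(3), $(\pd^2 K_\Ome/\pd x_1^2)(x)=-\int_{\pd\Ome}(\pd/\pd x_1)k(r)\,e_1\cdot n(y)\,d\sigma(y)$, and then decompose $\pd\Ome$ into the two flat disc caps $\{y_1=0\}$, $\{y_1=1\}$ and the lateral surface of revolution parametrized by $(t,\ome(t)\theta)$, $t\in[0,1]$, $\theta\in S^{m-2}$. On the flat caps $n=\pm e_1$; on the lateral part, writing $\ome^{m-1}$ as the concave profile, one converts the surface integral over $S^{m-2}$ into a factor $\sigma_{m-2}(S^{m-2})$ times a single $t$-integral, because the integrand depends on $\bar y$ only through $|\bar y|=\ome(t)$ (evaluated at the axis point $x=(\la,0,\dots,0)$, so $r^2=(\la-t)^2+\ome(t)^2$). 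After this reduction the claim becomes: a certain one-dimensional integral, whose kernel involves $k'(r)/r$ and the geometric data $\ome$, $\ome'$, is negative. The role of the hypotheses "$k'(r)/r$ increasing" and "$\ome^{m-1}$ concave" is precisely to control the sign of this integrand: concavity of $\ome^{m-1}$ lets one integrate by parts (or apply a monotonicity rearrangement) against the increasing factor $k'(r)/r$ to force the right sign, in the same spirit as the manipulations displayed in Examples \ref{ex_triangle}--\ref{ex_arcon} where the "contribution of the slopes/side" is isolated.

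The main obstacle I expect is exactly this final sign analysis: showing that the lateral contribution, which by itself need not have a definite sign pointwise in $t$, becomes negative after combining with the cap term $2\pi(\dots)(\la-1)\int(\dots)$ and exploiting monotonicity of $k'(r)/r$ together with concavity of $\ome^{m-1}$. The strategy there is to split the $t$-integral at $t=\la$ (where $\la-t$ changes sign), substitute so that the two pieces are compared on a common range of $r$, and use that $k'(r)/r$ is monotone to bound the "far" piece by the "near" piece — essentially an Alexandrov-type reflection/rearrangement inequality at the level of the one-dimensional integral. One must also verify the boundary terms from the integration by parts at $t=0$ and $t=1$ have the correct sign (they involve $\ome(0)^{m-1}$ and $\ome(1)^{m-1}$ and pair with the flat-cap integrals), and treat the piecewise-$C^1$ corners of $\ome$ by summing over the smooth pieces so that interior corner contributions cancel or are manifestly signed. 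Continuity of $K_\Ome$ (Proposition \ref{regularity}(1)) then extends strict concavity from the open part of the axis to the compact segment $Uf(\Ome)$, and strict concavity yields the uniqueness of the $k$-center via Proposition \ref{exist_uf}.
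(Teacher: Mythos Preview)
Your outline is essentially the paper's approach: reduce to the axis via Proposition~\ref{uf_symm}, write $(\partial^2 K_\Omega/\partial x_1^2)(\la,0)$ as a boundary integral, split it into the two flat caps and the lateral piece, and handle the lateral integral by pairing $t=\la-\de$ with $t=\la+\de$. Two points, however, need correction.

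First, you are missing a key step: a sharper localization of $Uf(\Ome)$ along the axis. Setting $a=\min\{t:\ome(t)=\max\ome\}$ and $b=\max\{t:\ome(t)=\max\ome\}$, the concavity of $\ome^{m-1}$ (hence of $\ome$) lets you fold $\Ome$ in the $e_1$-direction down to height $a/2$ from the left and up to $(1+b)/2$ from the right, so that $Uf(\Ome)\subset\{(\la,0):a/2\le\la\le(1+b)/2\}$. This bound is not a convenience but a necessity for your reflection step: the paper's decomposition of the lateral integral (written as a Stieltjes integral against $d\ome^{m-1}$) into $\int_0^{2\la-a}+\int_{2\la-a}^{\la}+\int_\la^a$ requires $2\la-a\ge 0$, i.e.\ $\la\ge a/2$. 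For $\la<a/2$ the reflected interval $[\la,2\la]$ does not cover $[\la,a]$, and the leftover piece $\int_{2\la}^{a}$ has the wrong (positive) sign, so your pairing argument would not close. Applying Proposition~\ref{uf_symm}(1) only with the $m-1$ directions orthogonal to the axis, as you propose, gives the whole axis; you must also fold in the $e_1$-direction.

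Second, your expectation that the lateral term ``becomes negative after combining with the cap term'' and that one must track ``boundary terms from the integration by parts'' is off. In the paper's argument the three pieces are \emph{separately} nonpositive: the two cap integrals carry the factors $\la\ge 0$ and $-(\la-1)\ge 0$ times an integral of $k'(r)/r<0$, so they are obviously negative; and the lateral integral is shown to be $\le 0$ directly by the reflection pairing (using that $k'(r)/r$ is increasing and $\le 0$, that $(\ome^{m-1})'$ is nonincreasing, and that $\ome$ is nondecreasing on $[0,a]$), with no integration by parts and no borrowing from the caps. Once you have the localization $a/2\le\la\le(1+b)/2$, the cases $\la\in[a/2,a]$, $\la\in[a,b]$, $\la\in[b,(1+b)/2]$ are each handled by this pairing in a few lines.
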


\begin{proof}
Put
\[
a=\min \left\{ t \in [0,1] \lvert \ome (t)= \max_{0 \leq \tau \leq 1}\ome (\tau ) \right. \right\}, \ 
b=\max \left\{ t \in [0,1] \lvert \ome (t)= \max_{0 \leq \tau \leq 1}\ome (\tau ) \right. \right\}.
\]
Proposition \ref{uf_symm} and the concavity of $\ome$ imply that $Uf(\Ome )$ is contained in the line segment 
\[
\left\{ \( y_1,0 \) \in \R \times \R^{m-1} \lvert  \frac{a}{2} \leq y_1 \leq \frac{1+b}{2} \right\} \right. .
\]
Therefore, we show the negativity of $(\pd^2 K_\Ome / \pd x_1^2) (\la ,0 )$ for any $a/2 \leq \la \leq (1+b)/2$.

By Proposition \ref{regularity}, we have
\begin{align*}
\frac{\pd^2 K_\Ome}{\pd x_1^2} (\la ,0) 
&= - \int_{\pd \Ome} \frac{k'\( \sqrt{ \( \la -y_1 \)^2 + \lvert \bar{y} \rvert^2} \)}{\sqrt{ \( \la -y_1 \)^2 + \lvert \bar{y} \rvert^2}} \( \la -y_1 \) e_1 \cdot n(y) d \sigma (y)\\
&= \la \sigma_{m-2} \( S^{m-2} \) \int_0^{\ome (0)} \frac{k'\( \sqrt{ \la^2 +r^2} \)}{\sqrt{\la^2 + r^2}} r^{m-2}dr\\
&\quad + \frac{\sigma_{m-2} \( S^{m-2} \)}{m-1} \int_0^1 \frac{k'\( \sqrt{ \( \la -t \)^2 +\ome (t)^2} \)}{\sqrt{ \( \la -t \)^2 + \ome (t)^2}} \( \la -t \) d \ome (t)^{m-1} \\
&\quad -(\la -1) \sigma_{m-2} \( S^{m-2} \) \int_0^{\ome (1)} \frac{k'\( \sqrt{(\la -1)^2 +r^2} \)}{\sqrt{(\la -1)^2 + r^2}} r^{m-2}dr .
\end{align*}
For any $a/2 \leq \la \leq (1+b)/2$, the first and third terms are obviously negative. Therefore, it is sufficient to show the negativity of the second integral.

We first consider the case of $a/2 \leq \la \leq a$. We decompose the second integral into 
\[
\(\int_0^{2\la -a}+\int_{2\la -a}^{\la} + \int_{\la}^a \) \frac{k'\( \sqrt{ \( \la -t \)^2 +\ome (t)^2} \)}{\sqrt{ \( \la -t \)^2 + \ome (t)^2}} \( \la -t \) d \ome (t)^{m-1}.
\]
For any $0 \leq \de \leq  a-\la$, the concavity of $\ome^{m-1}$ implies $0 \leq ( \ome^{m-1})'(\la + \de ) \leq (\ome^{m-1})'(\la - \de )$, and the increasing behavior of $\ome$ implies $0 \leq \ome (\la - \de ) \leq \ome (\la +\de )$. Hence we obtain
\begin{align*}
&\quad \( \int_{2\la -a}^{\la}+\int_{\la}^a \) \frac{k'\( \sqrt{(\la - t)^2 + \ome (t)^2}\)}{\sqrt{(\la - t)^2 + \ome (t)^2}} (\la -t) d \ome (t)^{m-1} \\
&=\int_0^{a-\la} \( \frac{k'\( \sqrt{(\de^2 +\ome (\la -\de )^2}\)}{\sqrt{\de^2 +\ome (\la -\de )^2}} \( \ome^{m-1} \) '(\la -\de ) - \frac{k'\( \sqrt{\de^2 +\ome ( \la + \de )^2} \)}{\sqrt{\de^2 +\ome ( \la + \de )^2}} \( \ome^{m-1} \) '(\la +\de )  \) \de d\de \\
&\leq 0.
\end{align*}
Furthermore, we can easily get
\[
\int_0^{2\la -a} \frac{k'\( \sqrt{ \( \la -t \)^2 +\ome (t)^2} \)}{\sqrt{ \( \la -t \)^2 + \ome (t)^2}} \( \la -t \)  d\ome (t)^{m-1} <0,
\]
which completes the proof in the case of $a/2 \leq \la \leq a$.

The same argument works for the case of $b \leq \la \leq (1+b)/2$. Furthermore, the negativity of $(\pd^2 K_\Ome / \pd x_1^2) (\la ,0 )$ for $a \leq \la \leq b$ is obvious.
\end{proof}

\begin{cor}\label{uniqueness_revolution} 
Let $\Ome$ and $k$ be as in Theorem \ref{concavity_revolution}. Then $\Ome$ has a unique $k$-center.
\end{cor}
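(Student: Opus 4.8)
\textbf{Proof proposal for Corollary~\ref{uniqueness_revolution}.}

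The plan is to deduce the corollary from Theorem~\ref{concavity_revolution} by a routine convexity argument, so essentially no new analytic work is needed. First I would recall from Proposition~\ref{exist} that, since $k$ is strictly decreasing and satisfies $(C^0_\al)$ (which is contained in the hypothesis $(C^1_\al)$, as $\al >1>0$), the potential $K_\Ome$ attains its maximum on $\R^m$ and every maximizer lies in $\conv \Ome$; in particular a $k$-center of $\Ome$ exists. Next, by Proposition~\ref{exist_uf}, every $k$-center of $\Ome$ in fact lies in the minimal unfolded region $Uf(\Ome)$, and by Remark~\ref{rem_uf} the set $Uf(\Ome)$ is nonempty, compact and convex.

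Now suppose, for contradiction, that $\Ome$ has two distinct $k$-centers $x_0 \neq x_1$. Both belong to $Uf(\Ome)$, and since $Uf(\Ome)$ is convex the whole segment $\overline{x_0 x_1}$ is contained in $Uf(\Ome)$. By Theorem~\ref{concavity_revolution}, $K_\Ome$ is strictly concave on $Uf(\Ome)$; evaluating strict concavity at the midpoint gives
\[
K_\Ome\!\left( \frac{x_0 + x_1}{2} \right) > \frac{K_\Ome(x_0) + K_\Ome(x_1)}{2} = K_\Ome(x_0) = \max_{\R^m} K_\Ome ,
\]
which is absurd. Hence the $k$-center is unique, which is the assertion of the corollary.

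The only point that deserves a moment's care is the meaning of ``strict concavity on $Uf(\Ome)$'' when $Uf(\Ome)$ is lower-dimensional: in the setting of Theorem~\ref{concavity_revolution} it is (a point or) a line segment in the $y_1$-axis, and what is actually established there is the strict negativity of $\pd^2 K_\Ome / \pd x_1^2$ along that segment, i.e.\ strict concavity of the one-variable restriction $\la \mapsto K_\Ome(\la,0)$. By the two propositions quoted above, this restriction is precisely the function whose maximizers are the $k$-centers, so the midpoint inequality applies to it verbatim and the conclusion stands (in the degenerate case $Uf(\Ome)$ a single point, uniqueness is immediate from Proposition~\ref{exist_uf}). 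I do not anticipate any genuine obstacle here; the entire substance of the result lies in Theorem~\ref{concavity_revolution}.
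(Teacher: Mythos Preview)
Your proposal is correct and matches the paper's intended argument: the paper gives no proof at all for this corollary, treating it as an immediate consequence of Theorem~\ref{concavity_revolution} together with Propositions~\ref{exist} and~\ref{exist_uf}. Your midpoint contradiction is exactly the routine step being left to the reader, and your remark about the one-dimensional nature of $Uf(\Ome)$ is a helpful clarification of what ``strictly concave on $Uf(\Ome)$'' means here.
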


\begin{rem}
{\rm When $\ome (t) =t^p$, the assumption ``$\ome^{m-1}$ is concave'' corresponds to $0 \leq p \leq 1/(m-1)$.}
\end{rem}

\begin{rem}
{\rm In the proof of Theorem \ref{concavity_revolution}, in order to show the negativity of $(\pd^2 K_\Ome / \pd x_1^2) (\la ,0 )$, we decomposed the boundary integral expression of $(\pd^2 K_\Ome / \pd x_1^2) (\la ,0 )$ into the three integrals over the left base, the side and the right base. The integrals over the bases were obviously negative, and we showed the negativity of the integral over the side. 

Unfortunately, this argument does not work for any axially symmetric convex body $\Ome$. When we apply this argument to the cone as in Example \ref{ex_cone}, the boundary integral over the side is not negative on the minimal unfolded region. In other words, in order to show the negativity of $(\pd^2 K_\Ome / \pd x_1^2) (\la ,0 )$ for any axially symmetric convex body $\Ome$, we have to estimate the boundary integral over the bases in more detail. We could not do it and leave the following problem as a conjecture:
\begin{quote}
Does an axially symmetric convex body $\Ome$ have a unique $k$-center? More generally, does a convex body $\Ome$ have a unique $k$-center? We allow to assume some conditions for the kernel $k$ if necessary.
\end{quote}
}
\end{rem}
\subsection{Uniqueness of a center of a non-obtuse triangle}
\begin{thm}\label{concavity_triangle}
Let $\Ome$ be a non-obtuse triangle in $\R^2$. Suppose that the kernel $k$ is strictly decreasing satisfies the condition $(C^1_\al )$ for some $\al >1$. If $k'(r)/r$ is increasing on the interval $( d(\Ome ),D( \Ome ) )$, then the potential $K_\Ome$ is strictly concave on the minimal unfolded region of $\Ome$.
\end{thm}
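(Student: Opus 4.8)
\textbf{Proof proposal for Theorem \ref{concavity_triangle}.}

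The plan is to mimic the proof of Theorem \ref{concavity_revolution}: choose coordinates adapted to a symmetry axis of the triangle, reduce the strict concavity of $K_\Ome$ on $Uf(\Ome)$ to the negativity of a one-variable second derivative, write that second derivative as a boundary integral via Proposition \ref{regularity}(3), split the boundary integral edge-by-edge, and pair up symmetric contributions using the monotonicity of $k'(r)/r$. First I would use Example \ref{uf_triangle}(1): for a non-obtuse triangle $Uf(\Ome)$ is the polygon cut out by the three perpendicular bisectors and the three angle bisectors, and in particular it is contained in the medial triangle. Now fix any point $p\in Uf(\Ome)$; I want to show $K_\Ome$ is strictly concave at $p$ in every direction. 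Pick a vertex $A$ of $\Ome$ and let $v$ be the direction of the angle bisector at $A$; since $\Ome$ is symmetric across the line $A+\Span\langle v\rangle$ (the triangle has an axis of symmetry through $A$ only if it is isosceles, so this step is not literally a symmetry of $\Ome$ — see the obstacle below) I instead set up coordinates so that $p$ lies on the $x_1$-axis and I study $(\pd^2 K_\Ome/\pd x_1^2)(p)$; by rotating, it suffices to bound $(\pd^2 K_\Ome/\pd x_1^2)$ at $p$ for $x_1$ ranging over the one-dimensional slice of $Uf(\Ome)$ in that direction, and then to let the direction vary.

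The core computational step is: write, for $p=(\la,0)$,
\[
\frac{\pd^2 K_\Ome}{\pd x_1^2}(\la,0) = -\int_{\pd\Ome}\frac{k'(r)}{r}(\la-y_1)\,e_1\cdot n(y)\,d\sigma(y),
\]
and decompose $\pd\Ome$ into its three edges. For the edge(s) on which $e_1\cdot n(y)$ and $(\la-y_1)$ have the same sign (the ``far'' edges relative to $p$), the corresponding contribution is manifestly negative, exactly as in Theorem \ref{concavity_revolution}. For the remaining part, I would parametrize the offending edge and pair the point at parameter $\la-\de$ with the point at parameter $\la+\de$ (reflection in the foot of the perpendicular from $p$), so that the two contributions combine into
\[
\int_0^{\cdot}\de\left(\frac{k'(\sqrt{\de^2+g(\la-\de)^2})}{\sqrt{\de^2+g(\la-\de)^2}}\,h'(\la-\de) - \frac{k'(\sqrt{\de^2+g(\la+\de)^2})}{\sqrt{\de^2+g(\la+\de)^2}}\,h'(\la+\de)\right)d\de,
\]
where $g$ measures the perpendicular distance from the axis to the edge and $h$ encodes the (affine) arclength weight; because the edges are straight, $g$ is affine and $h'$ is constant on each edge, so the monotone decreasing behaviour of $r\mapsto k'(r)/r$ on $(d(\Ome),D(\Ome))$ forces each such paired integrand to have the right sign. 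Summing over edges and over the choice of axis direction gives the strict negativity of the directional second derivative at $p$, hence strict concavity of $K_\Ome$ on $Uf(\Ome)$.

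The main obstacle is that, unlike the body of revolution in Theorem \ref{concavity_revolution}, a generic non-obtuse triangle has \emph{no} axis of symmetry, so the clean pairing ``$\la-\de \leftrightarrow \la+\de$'' does not come from a symmetry of $\Ome$; one must instead exploit the geometry of $Uf(\Ome)$ directly. The key geometric fact to extract from Example \ref{uf_triangle}(1) is that every point $p\in Uf(\Ome)$ lies on the far side of each perpendicular bisector and each angle bisector, which pins down the signs of $(\la-y_1)$ and $e_1\cdot n(y)$ along each edge relative to the foot of the perpendicular from $p$, and guarantees that the ``bad'' overlap region where the naive sign argument fails is always contained in a piece that can be reflected into the triangle. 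Verifying that, for \emph{every} $p\in Uf(\Ome)$ and every direction, the reflected piece of the near edge lands inside $\Ome$ (equivalently, that the decomposition above never leaves $\Ome$) is exactly the non-obtuseness hypothesis at work, and checking it carefully — handling the vertex angles and the three cases according to which edge is ``near'' $p$ — is where the real work lies. Once that is in place, the monotonicity of $k'(r)/r$ does the rest just as before, and Corollary-type uniqueness of the $k$-center follows from Proposition \ref{exist_uf}.
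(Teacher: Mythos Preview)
Your overall strategy matches the paper's: write $\pd^2 K_\Ome/\pd x_1^2$ as the boundary integral $-\int_{\pd\Ome}\frac{k'(r)}{r}(x_1-y_1)\,dy_2$, break it over the three edges, and exploit the monotonicity of $k'(r)/r$ via a pairing argument, using that $Uf(\Ome)$ lies in the medial triangle $\triangle ABC$. You also correctly anticipate that one must let the direction vary; the paper does this by rotating $\Ome$ through every angle $\theta\in[-\pi/2,\pi/2]$ and proving negativity of $\pd^2 K_{R_\theta\Ome}/\pd x_1^2$ on $R_\theta(\triangle ABC)$.

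However, the pairing scheme as you describe it has a gap. The within-edge pairing ``$\la-\de\leftrightarrow\la+\de$'' on a single edge only controls the portion of that edge that is symmetric about the vertical line $y_1=x_1$; it leaves an unpaired remainder near one endpoint. When $x_1$ lies beyond the midpoint of the edge's $y_1$-range, that remainder carries the wrong sign, and no single-edge argument repairs it. Likewise, your claim that the ``far'' edges contribute with an obvious sign fails whenever $x_1$ lies strictly between the $y_1$-coordinates of the endpoints of such an edge --- which, for a generic direction, happens on two of the three edges simultaneously.

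The mechanism the paper actually uses, and which your sketch does not identify, is a \emph{cross-edge} pairing at a vertex. In the paper's Case I.1 with $z_1(A_\theta)\le x_1\le z_1(Q_\theta)$, one introduces the reflected abscissa $2x_1-z_1(P_\theta)$, lets $X_\theta$ and $Y_\theta$ be the points on the edges $OP_\theta$ and $OQ_\theta$ with that first coordinate, and proves
\[
\Bigl(\int_{Y_\theta O}+\int_{OX_\theta}\Bigr)\frac{k'(r)}{r}\,(x_1-y_1)\,dy_2 \;>\; 0
\]
by establishing the pointwise distance inequality: for each common abscissa, the point on $OP_\theta$ is farther from $x$ than the corresponding point on $OQ_\theta$. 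That inequality is exactly where the containment $x\in\triangle A_\theta B_\theta C_\theta$ enters (through $x_2\ge \slope(OQ_\theta)(x_1-z_1(A_\theta))+z_2(A_\theta)$), and it is a comparison of radii on two different edges with different slopes --- not the set-theoretic statement ``the reflected piece of the near edge lands inside $\Ome$'' that you propose. The case analysis is also finer than the three cases you anticipate: the paper distinguishes eleven configurations according to the sign of $\theta$, the ordering of $z_1(A_\theta),z_1(B_\theta),z_1(Q_\theta),z_1(P_\theta),0$, and the sign of the slope of the opposite edge; in each one the decomposition into within-edge pieces and cross-edge pieces is arranged differently.
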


\begin{proof} 
For an angle $-\pi/2 \leq \theta \leq \pi /2$, let
\[
R_\theta = \(
\begin{array}{cc}
\cos \theta &-\sin \theta \\
\sin \theta  &\cos \theta 
\end{array}
\) .
\]
We show that the second derivative $\pd^2 K_{R_\theta \Ome} / \pd x_1^2$ is negative on the minimal unfolded region of $R_\theta \Ome$ for any $-\pi /2 \leq \theta \leq \pi /2$.

Let $O$ be origin, $P$ the point $(1,0)$, and $Q$ a point $(a,b)$ with the following conditions:
\[
\frac{1}{2} \leq a \leq 1,\ b >0,\ \( a- \frac{1}{2} \)^2 + b^2 \geq \frac{1}{4} .
\]
By an orthogonal action of $\R^2$, we may assume that $\Ome$ is given by $\triangle OPQ$. Let $A$, $B$ and $C$ be the middle points of the line segments $OP$, $PQ$ and $QO$, respectively. We remark that the minimal unfolded region of $\Ome$ is contained in $\triangle ABC$ (see Example \ref{uf_triangle}). 

We identify the notation $z_j$ for the $j$-th coordinate with the function $z_j : \R^2 \ni (z_1,z_2) \mapsto z_j \in \R$. We denote the point $R_\theta P$ by $P_\theta$, for short, and so on. 

We have to consider the following eleven cases about the position of $R_\theta \Ome$ (see Figure \ref{triangleI1} to \ref{triangleII32}):

\begin{description}
\item[Case I] The rotation angle $\theta$ is non-negative.
\begin{description}
\item[Case I.1] $z_1 ( A_\theta ) \leq z_1 ( Q_\theta ) \leq z_1 ( B_\theta )$.
\item[Case I.2] $z_1 ( Q_\theta ) \leq z_1 ( A_\theta ) \leq z_1 ( B_\theta )$.
\item[Case I.3.1] $0 \leq z_1 ( B_\theta ) \leq z_1 ( A_\theta )$ and the slope of the line $P_\theta Q_\theta$ is non-positive.
\item[Case I.3.2] $0 \leq z_1 ( B_\theta ) \leq z_1 ( A_\theta )$ and the slope of the line $P_\theta Q_\theta$ is non-negative.
\item[Case I.4.1] $z_1 ( B_\theta ) \leq 0 \leq z_1 ( A_\theta )$ and the slope of the line $P_\theta Q_\theta$ is non-positive.
\item[Case I.4.2] $z_1 ( B_\theta ) \leq 0 \leq z_1 ( A_\theta )$ and the slope of the line $P_\theta Q_\theta$ is non-negative.
\end{description}
\item[Case II] The rotation angle $\theta$ is non-positive.
\begin{description}
\item[Case II.1] $z_1 ( C_\theta ) \leq z_1 ( A_\theta )$.
\item[Case II.2.1] $z_1 ( A_\theta ) \leq z_1 ( C_\theta ) \leq z_1 ( P_\theta )$ and the slope of the line $OQ_\theta$ is non-negative.
\item[Case II.2.2] $z_1 ( A_\theta ) \leq z_1 ( C_\theta ) \leq z_1 ( P_\theta )$ and the slope of the line $OQ_\theta$ is non-positive.
\item[Case II.3.1] $z_1 ( P_\theta ) \leq z_1 ( C_\theta )$ and the slope of the line $OQ_\theta$ is non-negative.
\item[Case II.3.2] $z_1 ( P_\theta ) \leq z_1 ( C_\theta )$ and the slope of the line $OQ_\theta$ is non-positive.
\end{description}
\end{description}
\begin{figure}[htbp]
\begin{minipage}[h]{0.315\linewidth}
\includegraphics[width=\linewidth]{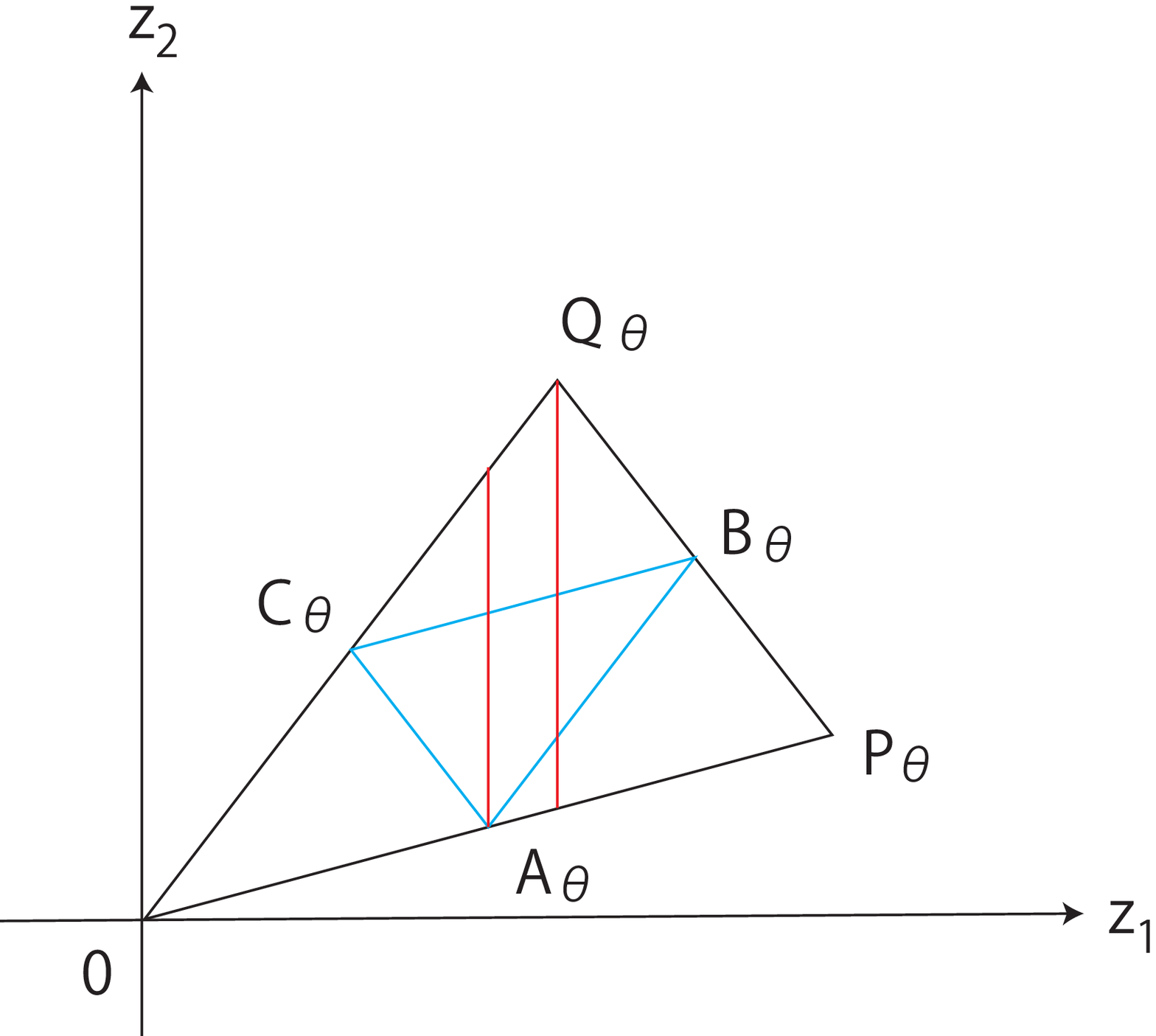}
\caption{Case I.1}
\label{triangleI1}
\end{minipage}
\hspace{0.01\linewidth}
\begin{minipage}[h]{0.315\linewidth}
\includegraphics[width=\linewidth]{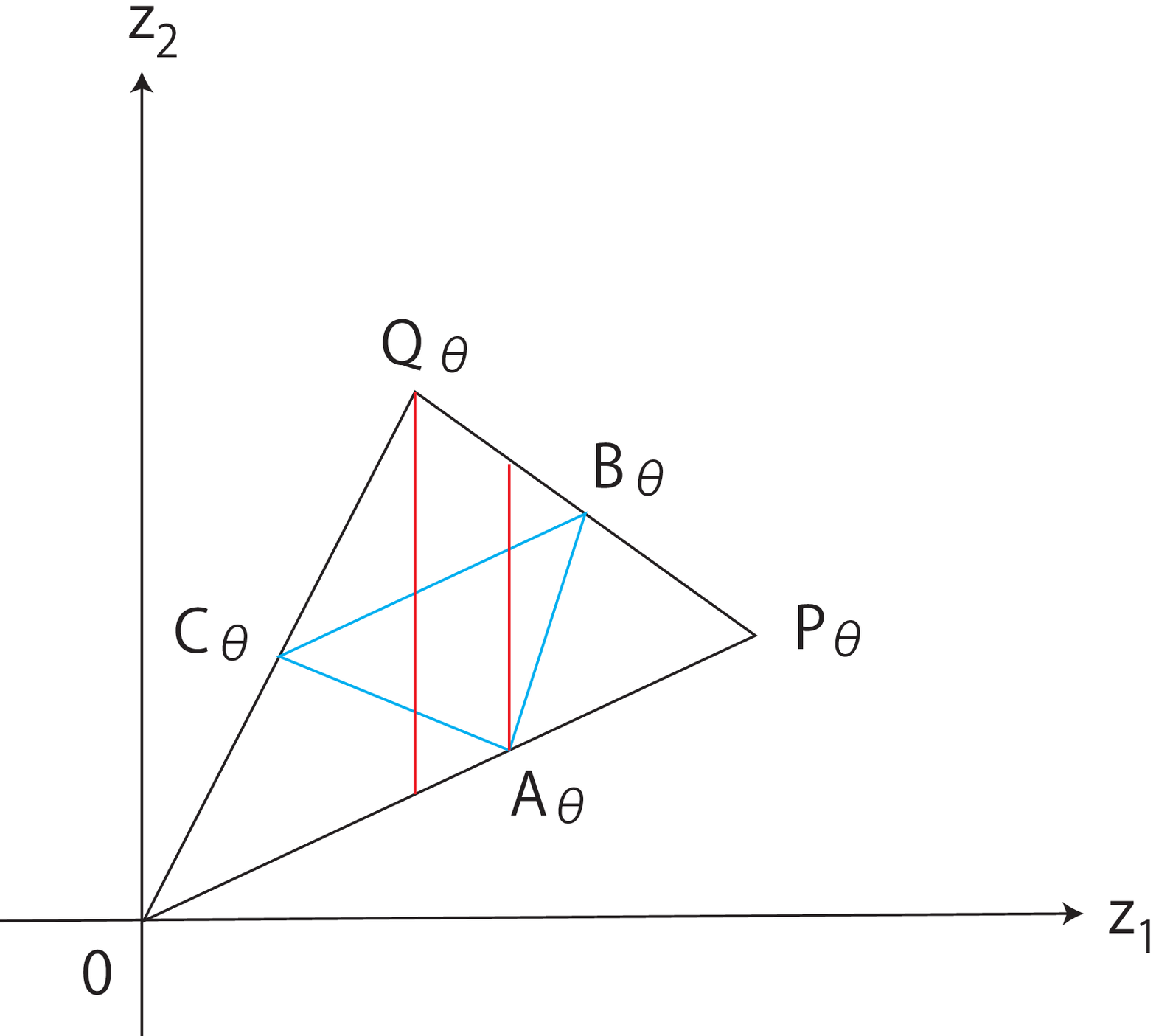}
\caption{Case I.2}
\label{triangleI2}
\end{minipage}
\hspace{0.01\linewidth}
\begin{minipage}[h]{0.315\linewidth}
\includegraphics[width=\linewidth]{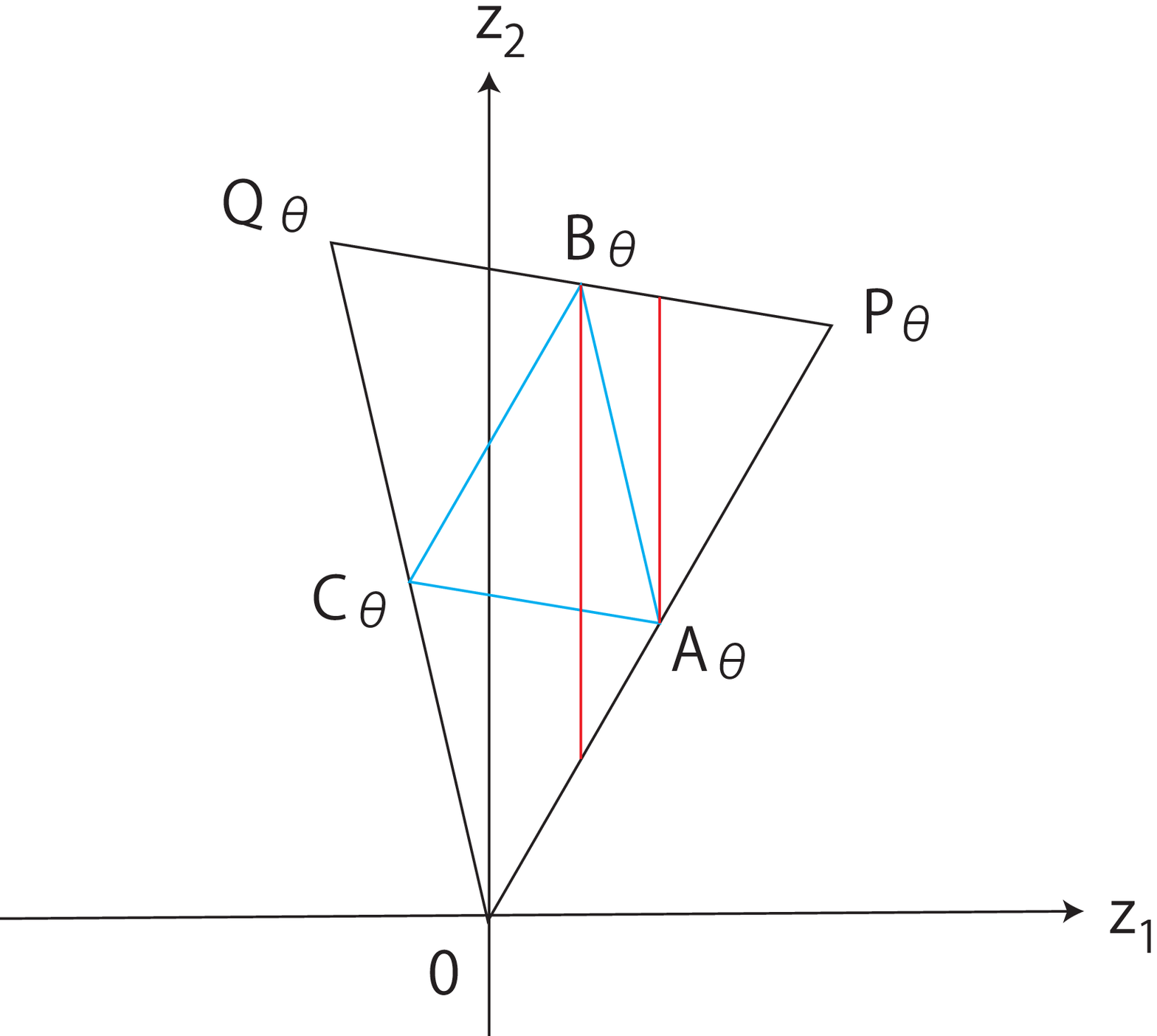}
\caption{Case I.3.1}
\label{triangleI31}
\end{minipage}
\hspace{0.01\linewidth}
\begin{minipage}[h]{0.315\linewidth}
\includegraphics[width=\linewidth]{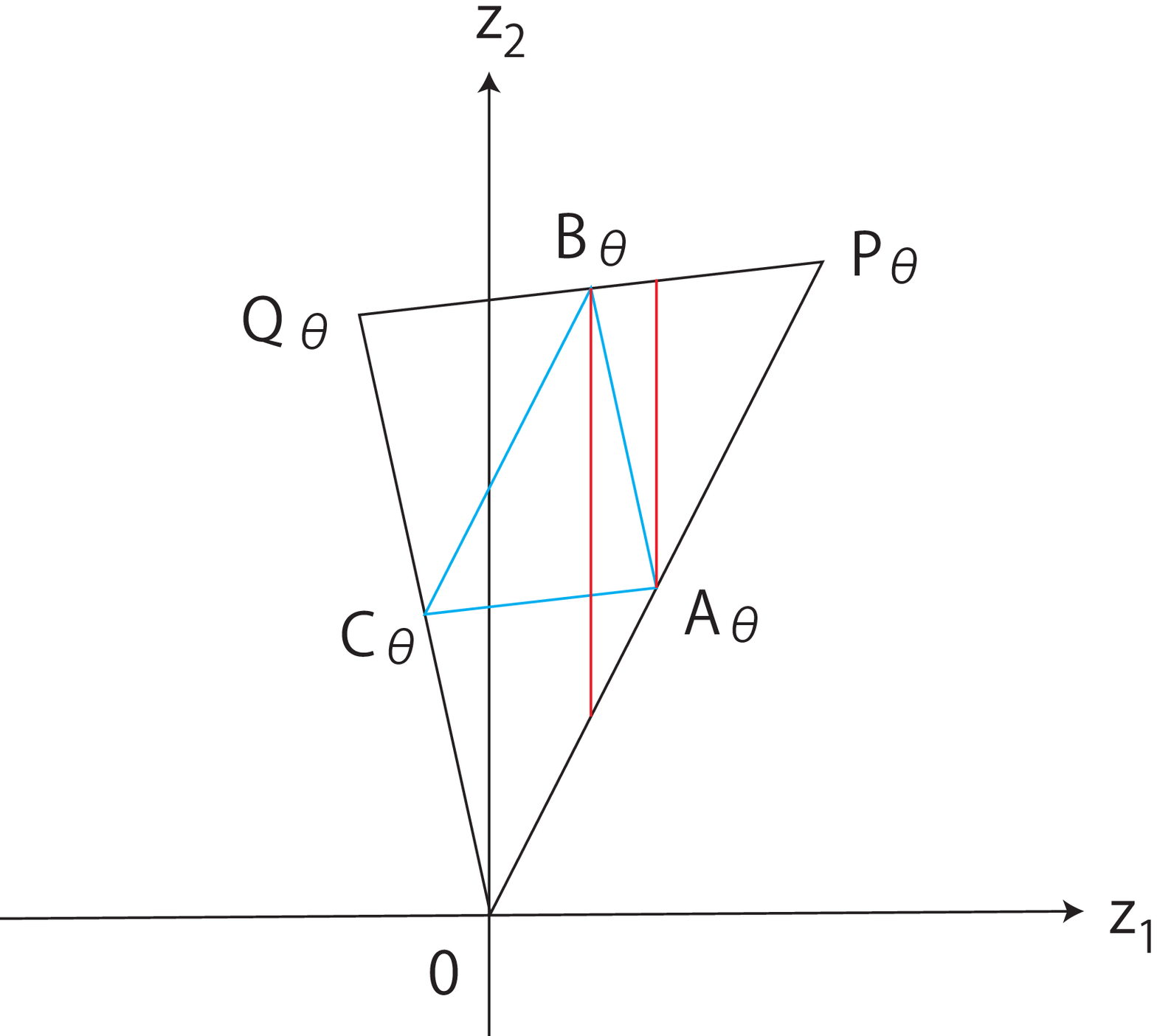}
\caption{Case I.3.2}
\label{triangleI32}
\end{minipage}
\hspace{0.01\linewidth}
\begin{minipage}[h]{0.315\linewidth}
\includegraphics[width=\linewidth]{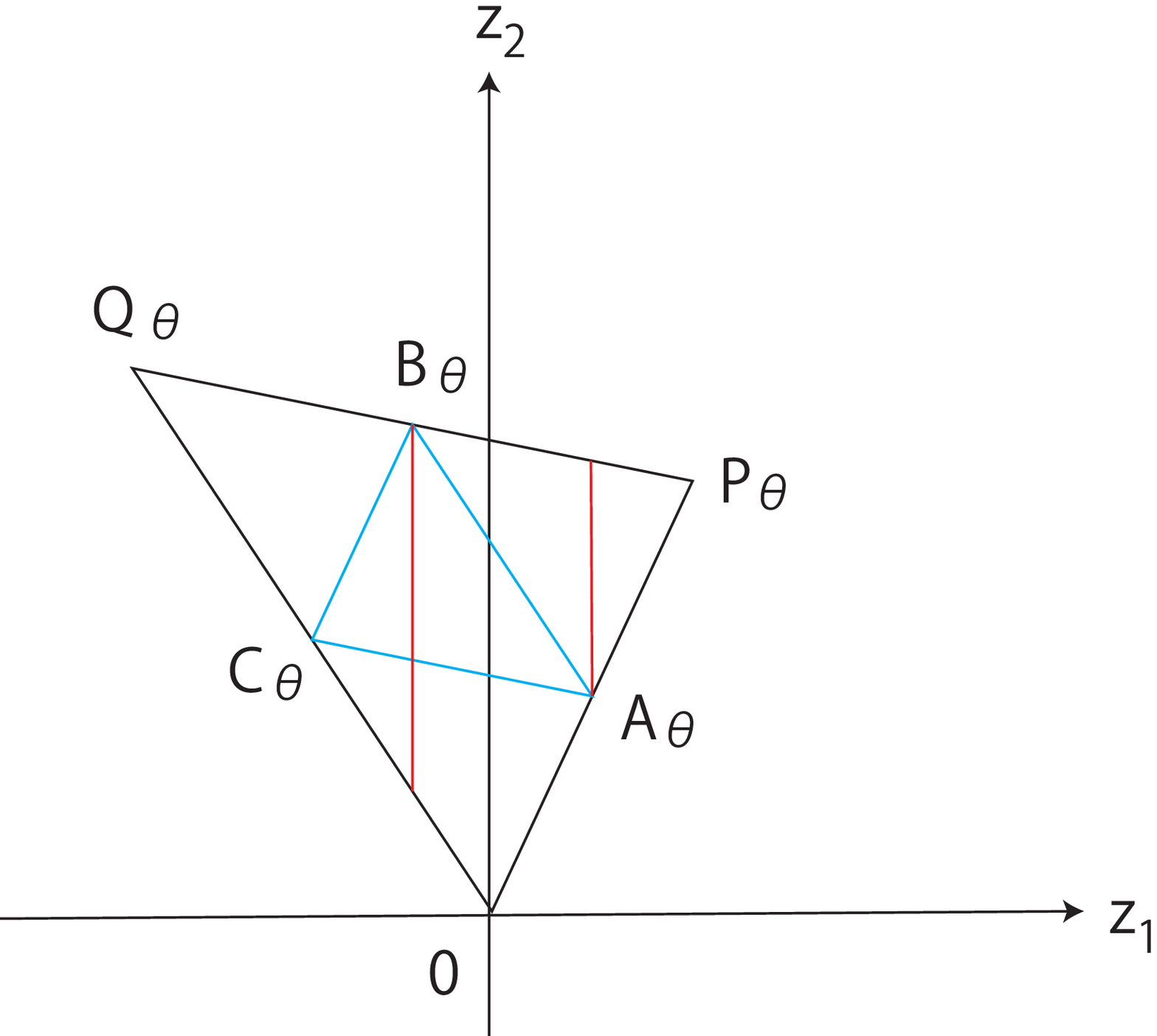}
\caption{Case I.4.1}
\label{triangleI41}
\end{minipage}
\hspace{0.01\linewidth}
\begin{minipage}[h]{0.315\linewidth}
\includegraphics[width=\linewidth]{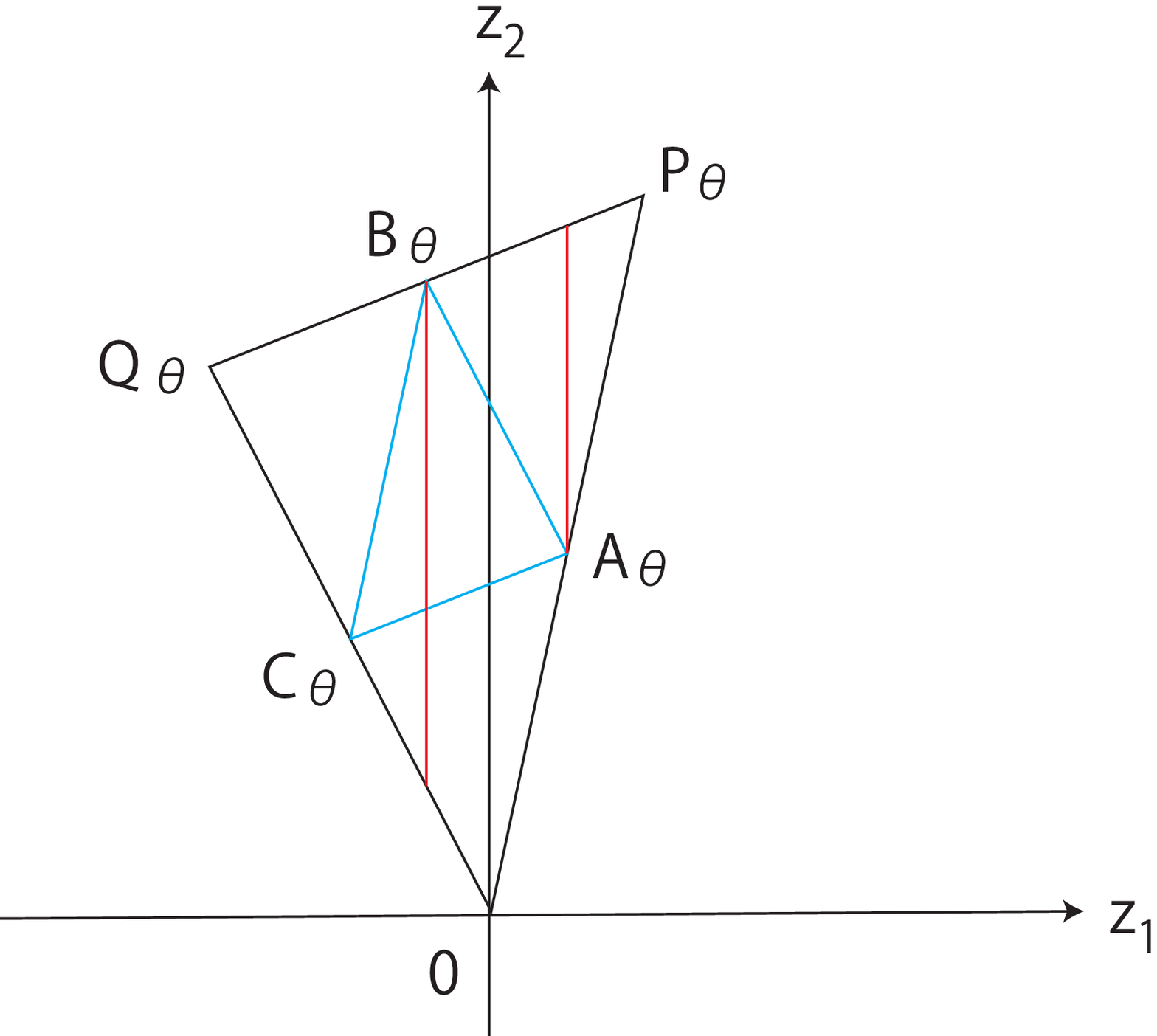}
\caption{Case I.4.2}
\label{triangleI42}
\end{minipage}
\hspace{0.01\linewidth}
\begin{minipage}[h]{0.315\linewidth}
\includegraphics[width=\linewidth]{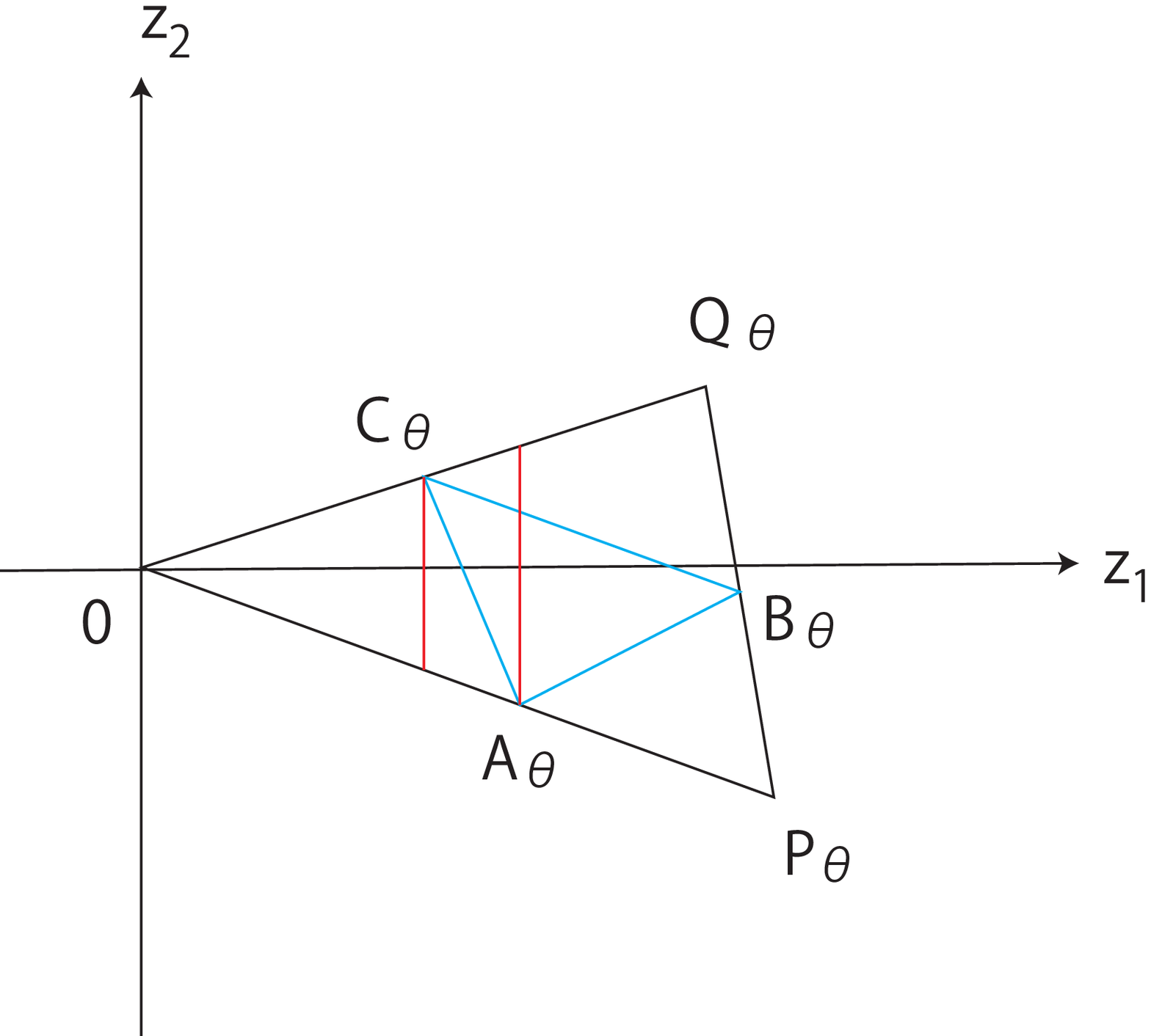}
\caption{Case II.1}
\label{triangleII1}
\end{minipage}
\hspace{0.01\linewidth}
\begin{minipage}[h]{0.315\linewidth}
\includegraphics[width=\linewidth]{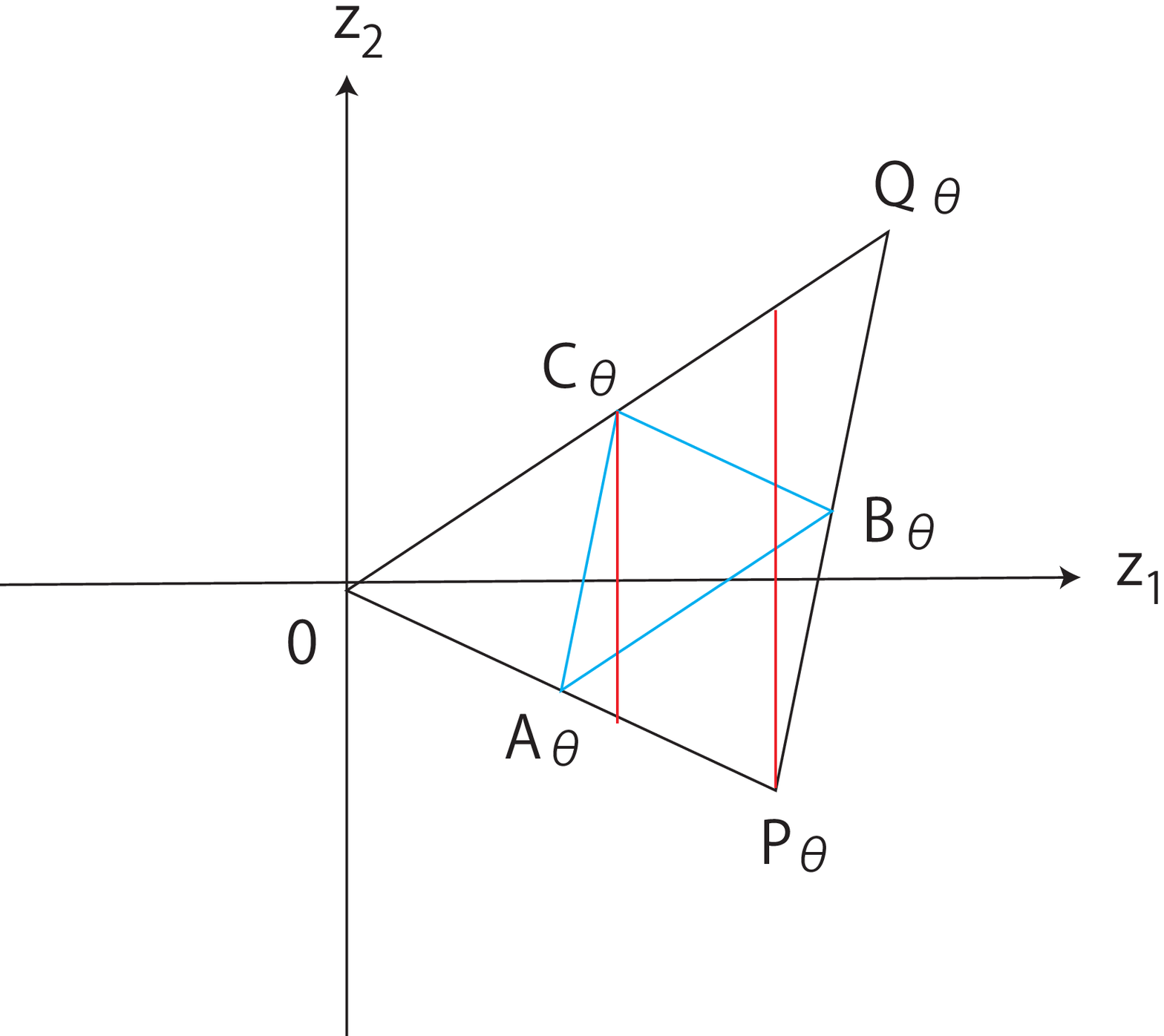}
\caption{Case II.2.1}
\label{triangleII21}
\end{minipage}
\hspace{0.01\linewidth}
\begin{minipage}[h]{0.315\linewidth}
\includegraphics[width=\linewidth]{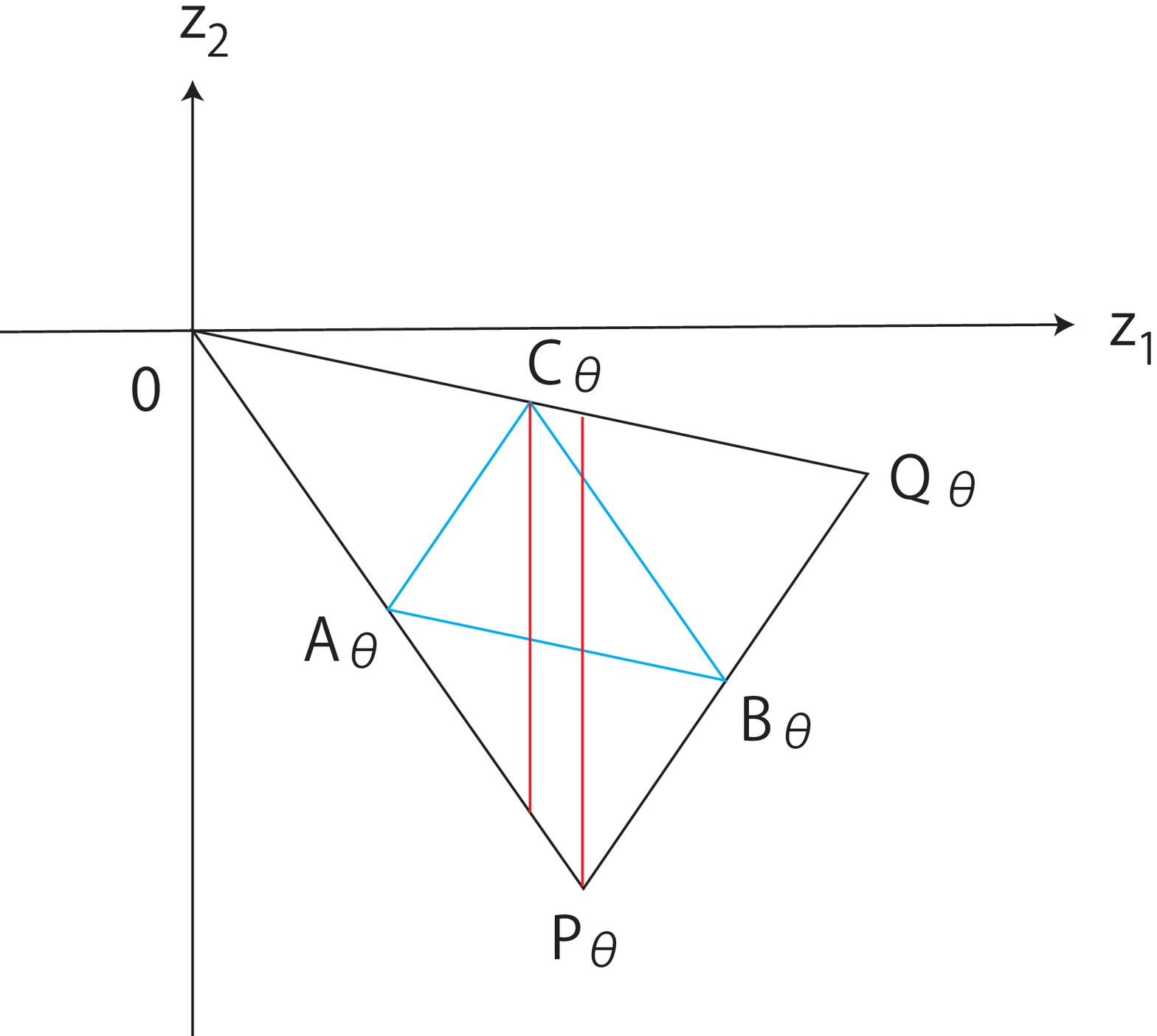}
\caption{Case II.2.2}
\label{triangleII22}
\end{minipage}
\hspace{0.01\linewidth}
\begin{minipage}[h]{0.315\linewidth}
\includegraphics[width=\linewidth]{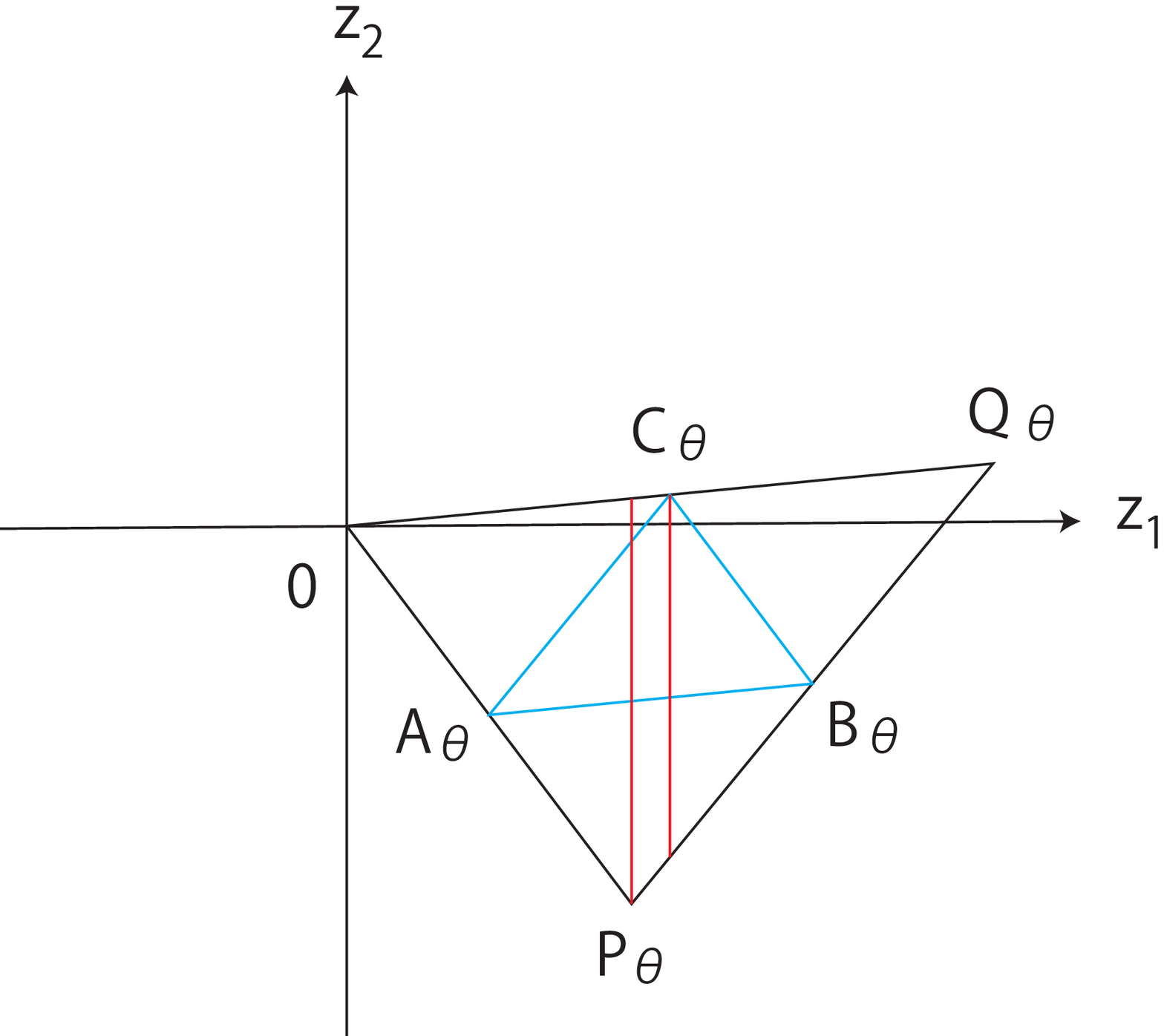}
\caption{Case II.3.1}
\label{triangleII31}
\end{minipage}
\hspace{0.01\linewidth}
\begin{minipage}[h]{0.315\linewidth}
\includegraphics[width=\linewidth]{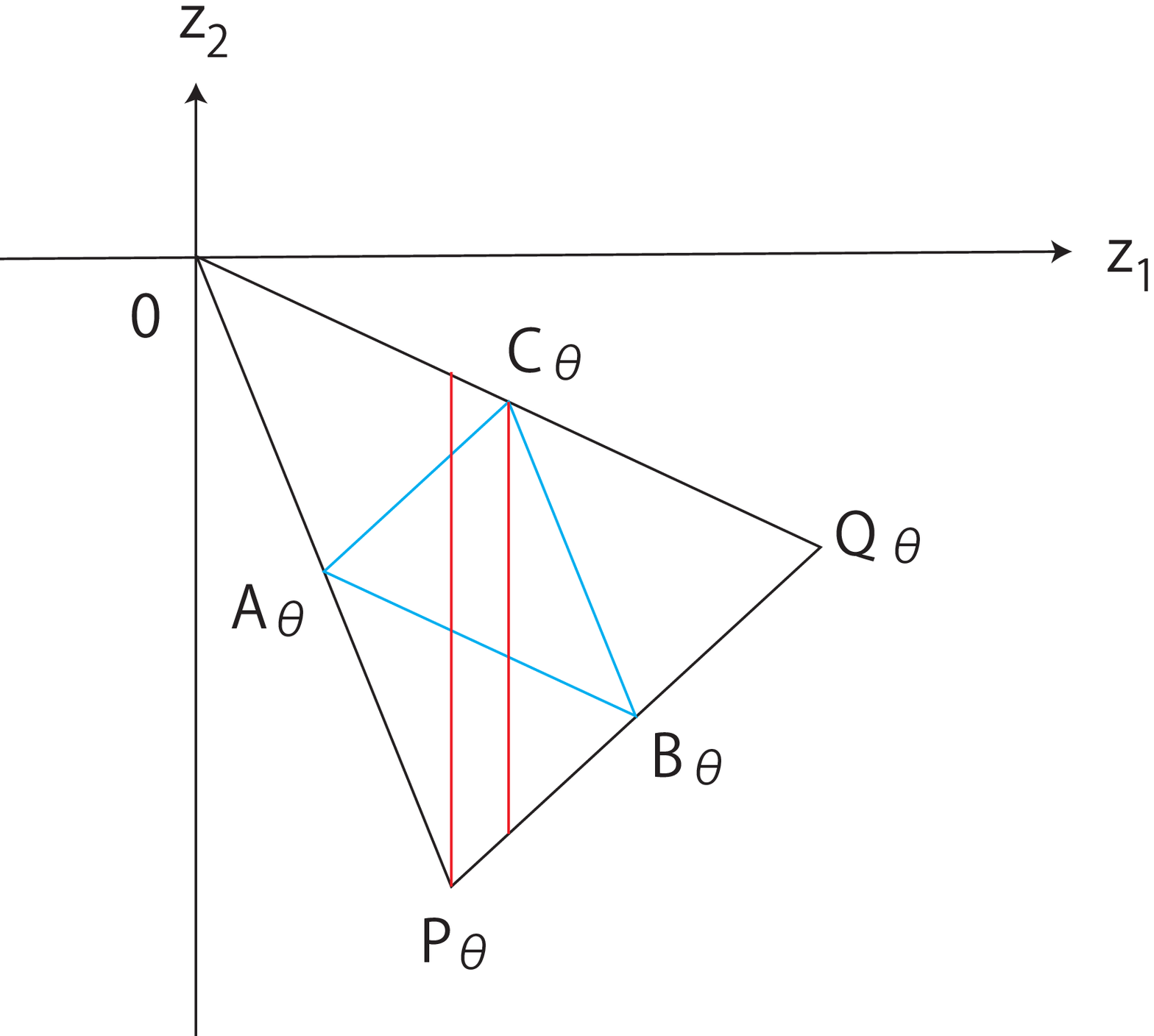}
\caption{Case II.3.2}
\label{triangleII32}
\end{minipage}
\end{figure}

We show the negativity of
\[
\frac{\pd^2 K_{R_\theta \Ome}}{\pd x_1^2}(x)
=-\int_{\pd R_\theta \Ome} \frac{k'(r)}{r}\( x_1 -y_1\) e_1 \cdot n(y) d\sigma (y) 
=-\int_{\pd R_\theta \Ome} \frac{k'(r)}{r} \( x_1 -y_1\) dy_2
\] 
for any $x \in R_\theta (\triangle ABC)$ only in Case I.1. The other cases go parallel. Fix an arbitrary point $x$ in $R_\theta (\triangle ABC)$. 

Suppose $z_1 (C_\theta) \leq x_1 \leq z_1 (A_\theta)$. Then we have the following inequalities in the same argument as in Theorem \ref{uniqueness_revolution} (see Figure \ref{triangleI11}):
\[
\int_{OP_\theta} \frac{k'(r)}{r} \( x_1 -y_1\) dy_2 > 0,\ \int_{Q_\theta O} \frac{k'(r)}{r} \( x_1 -y_1\) dy_2 >0.
\] 
Thus the second derivative of $K_{R_\theta \Ome}$ is negative at such a point $x$.

Suppose $z_1 (A_\theta) \leq x_1 \leq z_1(Q_\theta)$. Put
\begin{align*}
X_\theta&= \( 2x_1- z_1 \( P_\theta \) , \slope \( OP_\theta \) \( 2x_1- z_1 \( P_\theta \) \) \) ,\\
Y_\theta&= \( 2x_1- z_1 \( P_\theta \) , \slope \( OQ_\theta \) \( 2x_1- z_1 \( P_\theta \) \) \) ,\\
Z_\theta&= \( 2x_1 -z_1 \( Q_\theta \), \slope \( OQ_\theta \) \( 2x_1- z_1 \( Q_\theta \) \) \).
\end{align*} 
We have the following inequalities in the same argument as in Theorem \ref{concavity_revolution} (see Figure \ref{triangleI12}):
\[
\int_{X_\theta P_\theta} \frac{k'(r)}{r} \( x_1 -y_1\) dy_2 >0,\
\int_{Q_\theta Z_\theta} \frac{k'(r)}{r} \( x_1 -y_1\) dy_2 >0.
\] 
Let us show the positivity of the contour integral along the line segments $Y_\theta O$ and $OX_\theta$. We remark that, for any $0\leq \de \leq z_1 (X_\theta)$, we have 
\begin{align*}
&\quad \( \slope \( OP_\theta \) + \slope \( OQ_\theta\) \) \( z_1 \( X_\theta \) -\delta \) -2x_2  \\
&\leq \( \slope \( OP_\theta \) + \slope \( OQ_\theta\) \) \( z_1 \( X_\theta \) -\delta \) -2 \( \slope \( OQ_\theta \) \( x_1 - z_1 \( A_\theta \) \) + z_2 \( A_\theta \) \) \\
&=2x_1 \slope \( OP_\theta \) -2z_2 \( P_\theta \) -\delta \( \slope \( OP_\theta \) + \slope \( OQ_\theta \) \) \\
&\leq -\delta \( \slope \( OP_\theta \) + \slope \( OQ_\theta \) \) \\
&\leq 0,
\end{align*} 
where the first and the second inequalities follow from the fact that the point $x$ lie above the line $A_\theta B_\theta$ and that $x_1 \leq z_1 (P_\theta )$, respectively. This inequality implies
\begin{align*}
&\quad \lvert \binom{z_1 \( X_\theta \) -\delta}{\slope \( z_1 \( X_\theta \) -\delta \)} - \binom{x_1}{x_2}\rvert^2 - \lvert \binom{z_1 \( Y_\theta \) -\delta}{\slope \( OQ_\theta \) \( z_1 \( Y_\theta \) -\delta \)} -\binom{x_1}{x_2} \rvert^2 \\
&=\( \( \slope \( OP_\theta\) + \slope \( OQ_\theta\) \) \( z_1 \( OP_\theta \) -\delta \) -2x_2 \) \( \slope \( OP_\theta \) -\slope \( OQ_\theta \) \) \( z_1 \( X_\theta \) -\delta \) \\
&\geq 0
\end{align*} 
for any $0 \leq \de \leq z_1 (X_\theta )$. Hence we obtain 
\begin{equation}\label{XY}
\(\int_{Y_\theta O} + \int_{OX_\theta}\) \frac{k'(r)}{r} \( x_1 -y_1\) dy_2 > 0
\end{equation} 
in the same argument as in Theorem \ref{concavity_revolution} (see also Figure \ref{triangleI12}). Hence the second derivative of $K_{R_\theta \Ome}$ is negative at such a point $x$.

Suppose $z_1 (Q_\theta ) \leq x_1 \leq z_1 (B_\theta )$. In the same argument as in Theorem \ref{concavity_revolution}, we have the following inequalities (see Figure \ref{triangleI13}):
\[
\int_{X_\theta P_\theta} \frac{k'(r)}{r} \( x_1 -y_1\) dy_2 > 0,\ 
\int_{P_\theta Q_\theta} \frac{k'(r)}{r} \( x_1 -y_1\) dy_2 > 0.
\] 
Since the inequality (\ref{XY}) also holds in this case, the second derivative of $K_{R_\theta \Ome}$ is negative at such a point $x$ (see also Figure \ref{triangleI13}).
\end{proof}
\begin{figure}[htbp]
\begin{minipage}[h]{0.315\linewidth}
\includegraphics[width=\linewidth]{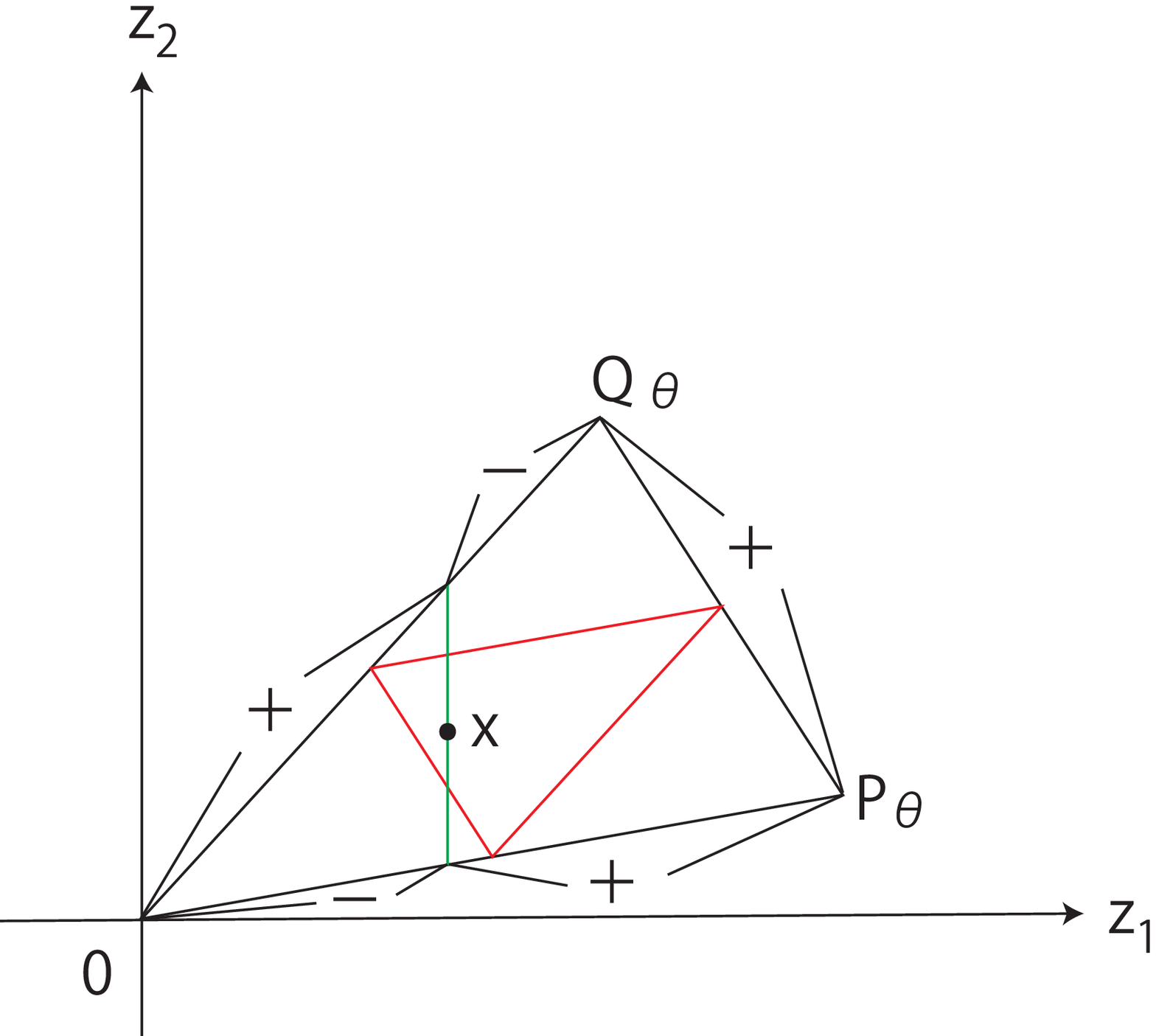}
\caption{}
\label{triangleI11}
\end{minipage}
\hspace{0.01\linewidth}
\begin{minipage}[h]{0.315\linewidth}
\includegraphics[width=\linewidth]{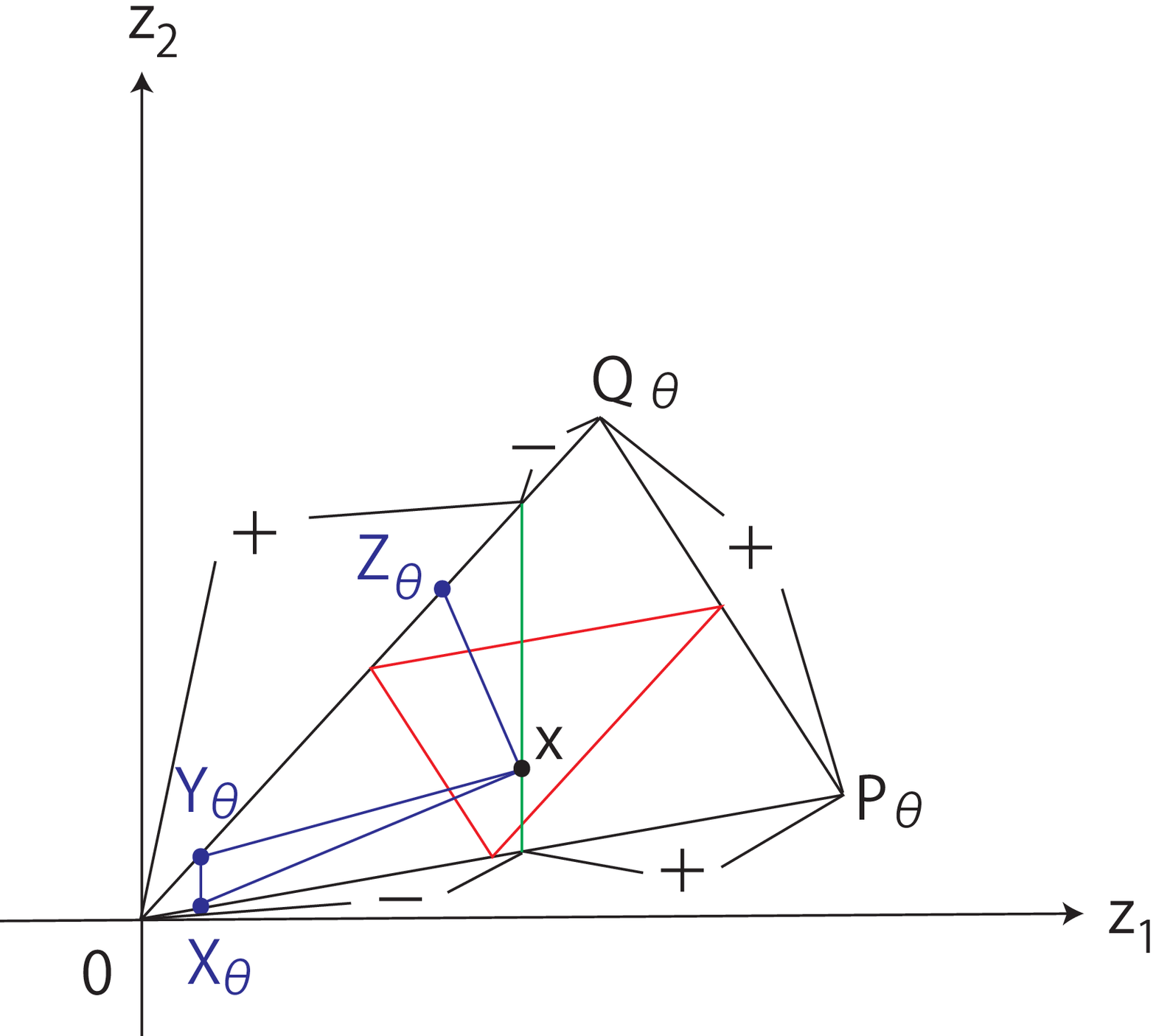}
\caption{}
\label{triangleI12}
\end{minipage}
\hspace{0.01\linewidth}
\begin{minipage}[h]{0.315\linewidth}
\includegraphics[width=\linewidth]{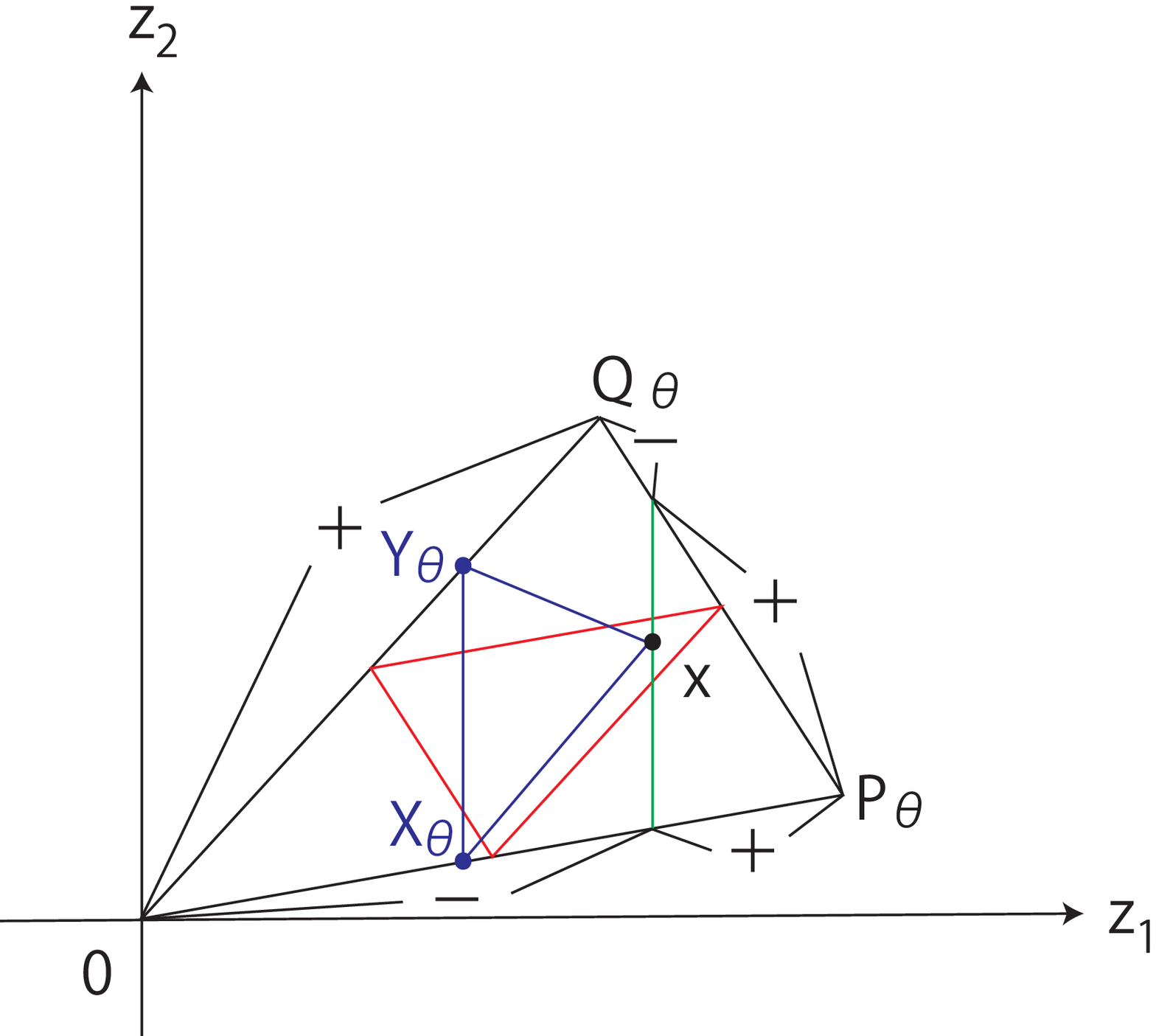}
\caption{}
\label{triangleI13}
\end{minipage}
\end{figure}

\begin{cor}\label{uniqueness_triangle} 
Let $\Ome$ and $k$ be as in Theorem \ref{concavity_triangle}. Then $\Ome$ has a unique $k$-center.
\end{cor}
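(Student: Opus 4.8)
The plan is to deduce Corollary \ref{uniqueness_triangle} from Theorem \ref{concavity_triangle} together with the existence and localization results of Section~2, via the elementary principle that a strictly concave function on a convex set has at most one maximizer on that set.

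First I would record where the $k$-centers of $\Ome$ can live. Since $k$ is strictly decreasing and satisfies $(C^1_\al)$, hence in particular $(C^0_\al)$, Proposition \ref{exist} guarantees that $K_\Ome$ attains its maximum, so at least one $k$-center exists, and that every $k$-center belongs to $\conv \Ome$. Moreover, because $k$ is strictly decreasing, Proposition \ref{exist_uf} sharpens this: every $k$-center of $\Ome$ belongs to $Uf(\Ome )$. Thus the (nonempty) set of $k$-centers is contained in $Uf(\Ome )$, which by Remark \ref{rem_uf}$(3)$ is compact and convex. Note also that the interval $(d(\Ome ),D(\Ome ))$ appearing in the hypothesis is exactly the range of the values $\vert z-w\vert$ for $z \in Uf(\Ome )$ and $w \in \pd \Ome$, so the monotonicity requirement on $k'(r)/r$ is imposed precisely on the distances that occur in the relevant boundary integrals.

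Next I would invoke Theorem \ref{concavity_triangle}, whose hypotheses coincide verbatim with those of the corollary: $\Ome$ is a non-obtuse triangle in $\R^2$, $k$ is strictly decreasing and satisfies $(C^1_\al )$ for some $\al >1$, and $k'(r)/r$ is increasing on $(d(\Ome ),D(\Ome ))$. The theorem therefore applies and yields that $K_\Ome$ is strictly concave on $Uf(\Ome )$. Then I would close the argument by the standard convexity computation: if $\Ome$ had two distinct $k$-centers $x_1 \neq x_2$, both would lie in $Uf(\Ome )$ and both would realize the global maximum value $M=\max_{\R^m} K_\Ome$; by convexity the midpoint $x_0=(x_1+x_2)/2$ again lies in $Uf(\Ome )$, and strict concavity there forces
\[
K_\Ome (x_0) > \tfrac12 K_\Ome (x_1) + \tfrac12 K_\Ome (x_2) = M,
\]
contradicting the definition of $M$. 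Hence the $k$-center is unique.

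There is essentially no obstacle in the corollary itself — all the analytic content is carried by Theorem \ref{concavity_triangle}. The only points that need a moment's care are verifying that Proposition \ref{exist_uf} is applicable (it is, since $k$ is strictly decreasing), so that the global maximizers are genuinely confined to the region on which concavity was established, and recalling the convexity of $Uf(\Ome )$ so that the midpoint step is legitimate; the remainder is the purely formal ``strict concavity on a convex set forces a unique maximizer'' deduction.
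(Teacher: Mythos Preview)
Your argument is correct and is exactly the intended one: the paper states the corollary without proof, treating it as immediate from Theorem \ref{concavity_triangle} together with Propositions \ref{exist} and \ref{exist_uf} and the convexity of $Uf(\Ome)$ from Remark \ref{rem_uf}(3). Your write-up just makes explicit the routine ``strictly concave on a convex set containing all maximizers'' deduction that the author leaves to the reader.
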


\begin{rem}
{\rm In the proof of Theorem \ref{concavity_triangle}, we showed the concavity of the potential $K_{\Ome}$ on the triangle $\triangle ABC$. Since the minimal unfolded region is contained in the triangle, we obtained the conclusion.

Unfortunately, this argument does not work for any obtuse triangle (except isosceles triangles). This is because the minimal unfolded region of an obtuse triangle is not contained in the triangle whose vertices are the middle points of the edges (see Example \ref{uf_triangle}).}
\end{rem}
\section{Applications to specific centers}
Let $\Ome$ be a body in $\R^m$. We consider some applications of the results in the previous section. 

Let
\begin{equation}
V_\Ome^{(\al )}(x) =
\begin{cases}
\ds \sign (m-\al ) \int_\Ome r^{\al -m} dy &( 0<\al \neq m ),\\
\ds -\int_\Ome \log r dy & ( \al =m ) ,
\end{cases}
\ x \in \R^m .
\end{equation}
The potential $V_\Ome^{(\al )}$ is called the {\it $r^{\al -m}$-potential} of order $\al$.

\begin{definition}[{\cite[Definition 3.1]{O1}}]
{\rm A point $x$ is called an {\it $r^{\al -m}$-center} of $\Ome$ if it gives the maximum value of $V_\Ome^{(\al )}$.}
\end{definition} 

\begin{thm}[{\cite[Theorem 3.1]{M1}}]
If $0<\al \leq 1$, and if $\Ome$ is convex, then $\Ome$ has a unique $r^{\al -m}$-center.
\end{thm}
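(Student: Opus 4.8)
The plan is to reduce the statement to a pointwise concavity/Hessian computation, exactly in the spirit of the ``strict concavity on the relevant region'' strategy used for Theorems \ref{concavity_revolution} and \ref{concavity_triangle}, but now with the extra room afforded by the range $0<\al\le 1$. First I would observe that $r^{\al-m}$ is a strictly decreasing kernel satisfying $(C^0_\al)$ (indeed $(C^1_\al)$ for $\al>1$, but here $\al\le 1$ so I only use continuity), so Proposition \ref{exist} applies and every $r^{\al-m}$-center of $\Ome$ lies in $\conv\Ome=\Ome$. Hence it suffices to show that $V_\Ome^{(\al)}$ has a \emph{unique} maximizer on $\Ome$, and for that I would show that $V_\Ome^{(\al)}$ is \emph{strictly concave} on the convex set $\Ome$ (a continuous strictly concave function on a convex body has a unique maximizer). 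Since $\sign(m-\al)=1$ for $\al<m$ (the case $m\ge 2$, which is the only interesting one; $m=1$ is trivial and $m=\al$ forces $m=1$ here), we are asking that $x\mapsto\int_\Ome r^{\al-m}\,dy$ be strictly concave.

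The key step is the classical fact that for $0<\al\le 1$ and $m\ge 2$ the single-variable building block $x\mapsto |x-y|^{\al-m}$ is, for each fixed $y$, a strictly concave function of $x$ on $\R^m\setminus\{y\}$ — more precisely its Hessian is negative definite there. One checks this by direct differentiation: writing $f(x)=r^{\al-m}$ with $r=|x-y|$, one gets
\[
\nabla^2 f(x)=(\al-m)r^{\al-m-2}\Bigl(I_m+(\al-m-2)\,\frac{(x-y)(x-y)^\top}{r^2}\Bigr),
\]
whose eigenvalues are $(\al-m)r^{\al-m-2}$ with multiplicity $m-1$ and $(\al-m)(\al-1)r^{\al-m-2}$ with multiplicity $1$. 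Since $\al-m<0$ and $\al-1\le 0$, both eigenvalues are $\le 0$, and the first batch is strictly negative; the single possibly-zero eigenvalue occurs only when $\al=1$, and even then in every two-dimensional section the Hessian is nonzero. Integrating over $y\in\Ome$ (justified because the singularity $r^{\al-m-2}$ is integrable near any fixed $x$ when $\al-m-2>-m$, i.e.\ $\al>2-$, which is fine, or more carefully by first proving concavity for $x$ outside $\Ome$ and then using continuity and an approximation/monotone limit as in Moszy\'nska's original argument), one concludes that $\nabla^2 V_\Ome^{(\al)}(x)$ is negative semidefinite everywhere and, because the direction of the null eigenvector varies with $y$ while $\Ome$ has positive measure, strictly negative definite. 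Thus $V_\Ome^{(\al)}$ is strictly concave on $\R^m$, in particular on $\Ome$.

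The main obstacle I anticipate is the regularity/integrability bookkeeping at $\al=1$ and near the boundary $\pd\Ome$: when $1<\al$ one has the clean $C^1$ (and piecewise-$C^2$) theory of Proposition \ref{regularity}, but for $0<\al\le 1$ the second derivatives of the kernel are only locally integrable in a weak sense, so I cannot simply differentiate under the integral sign and must instead argue concavity from the definition of concavity — i.e.\ verify the midpoint (or segment) inequality
\[
V_\Ome^{(\al)}\!\left(\tfrac{x+x'}{2}\right)\ \ge\ \tfrac12\,V_\Ome^{(\al)}(x)+\tfrac12\,V_\Ome^{(\al)}(x'),
\]
with strict inequality for $x\ne x'$, by integrating the corresponding pointwise inequality $|z-y|^{\al-m}\ge\tfrac12|x-y|^{\al-m}+\tfrac12|x'-y|^{\al-m}$ (with $z=(x+x')/2$). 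That pointwise inequality is exactly concavity of $t\mapsto t^{\al-m}$ composed with convexity of $y\mapsto |\cdot - y|$ plus the triangle inequality, all of which survive the singular case since the integrand is nonnegative and measurable; strictness then follows because equality in the triangle inequality for all $y\in\Ome$ would force $x=x'$. This routes entirely around differentiation under the integral and matches the logic of \cite{M1}; the convexity hypothesis on $\Ome$ is used only to guarantee $z\in\Ome$ (so that $V_\Ome^{(\al)}$ being maximized inside $\Ome$ is not an issue) and to make ``strictly concave $\Rightarrow$ unique maximizer'' legitimate.
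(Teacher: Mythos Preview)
The paper does not prove this theorem itself; it is quoted from \cite{M1} as a known result. I therefore compare your proposal to Moszy\'nska's actual argument.

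Your approach has a genuine error: the pointwise concavity of $x\mapsto |x-y|^{\al-m}$ that you claim is false. In the Hessian, the radial eigenvalue is $(\al-m)(\al-m-1)\,r^{\al-m-2}$, not $(\al-m)(\al-1)\,r^{\al-m-2}$; and for $0<\al\le 1$, $m\ge 2$ both factors $\al-m$ and $\al-m-1$ are negative, so this eigenvalue is strictly \emph{positive}. (Even with your own formula the product of two non-positive numbers is non-negative, so your sign conclusion was already inconsistent.) Concretely, restrict to any line through $y$: then $|x-y|^{\al-m}=|t|^{\al-m}$ is strictly convex on $(0,\infty)$ since $\al-m<0$. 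Consequently your midpoint inequality $|z-y|^{\al-m}\ge\tfrac12|x-y|^{\al-m}+\tfrac12|x'-y|^{\al-m}$ is false in general, and the second paragraph cannot be repaired by regularity bookkeeping: the pointwise building blocks are convex in the radial direction, not concave.

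Moszy\'nska's proof avoids this by working in the spherical representation \eqref{Phi} rather than the volume integral. For a convex body $\Ome$ and each fixed $v\in S^{m-1}$, the radial function $x\mapsto \rho_{\Ome-x}(v)$ is concave on $\Ome$ (this is precisely where convexity of $\Ome$ enters, and it is essential, not incidental as you suggest). Since $t\mapsto t^{\al}$ is concave and increasing for $0<\al\le 1$, the composition $x\mapsto \rho_{\Ome-x}(v)^{\al}$ is concave; integrating over $v\in S^{m-1}$ yields strict concavity of $\Phi_\Ome$, hence of $V_\Ome^{(\al)}$, on $\Ome$. The switch from ``integrate functions concave in $y$'' (which fails) to ``integrate functions concave in $v$'' (which works) is the whole content of the argument.
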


\begin{thm}[{\cite[Theorem 3.15]{O1}}]
If $\al \geq m+1$, then $\Ome$ has a unique $r^{\al -m}$-center.
\end{thm}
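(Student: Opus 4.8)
The plan is to prove the stronger assertion that for $\al \geq m+1$ the $r^{\al -m}$-potential $V_\Ome^{(\al )}$ is strictly concave on $\R^m$; uniqueness of the $r^{\al -m}$-center then follows at once. Since $\al >m$ we have $\sign (m-\al )=-1$, so $V_\Ome^{(\al )}(x)=-\int_\Ome |x-y|^{\al -m}\,dy$; in other words $V_\Ome^{(\al )}=K_\Ome$ for the kernel $k(r)=-r^{\al -m}$, which is strictly decreasing and satisfies $(C^0_\al )$ because $\al -m\geq 1>0$. By Proposition \ref{exist} the potential has a maximizer and every maximizer lies in $\conv \Ome$; since a strictly concave function on the convex set $\conv \Ome$ admits at most one maximizer, it remains only to prove strict concavity. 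I would do this directly at the level of the integrand, in order to avoid differentiating twice under the integral sign, which Proposition \ref{regularity} licenses only when $\pd \Ome$ is piecewise $C^1$.

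For each fixed $y$ the map $x\mapsto |x-y|^{\al -m}$ is convex on $\R^m$, being the composition of the increasing convex function $t\mapsto t^{\al -m}$ on $[0,+\infty )$ (here $\al -m\geq 1$) with the convex function $x\mapsto |x-y|$; integrating in $y$ shows that $V_\Ome^{(\al )}$ is concave. For strictness, fix distinct points $x_0,x_1$ and $\la \in (0,1)$, and set $x_\la =(1-\la )x_0+\la x_1$. Then
\[
V_\Ome^{(\al )}(x_\la )-(1-\la )V_\Ome^{(\al )}(x_0)-\la V_\Ome^{(\al )}(x_1)=\int_\Ome \Big( (1-\la )|x_0-y|^{\al -m}+\la |x_1-y|^{\al -m}-|x_\la -y|^{\al -m}\Big)\,dy ,
\]
and the integrand is everywhere nonnegative. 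Restricting $|\cdot -y|^{\al -m}$ to the line $L$ through $x_0$ and $x_1$ gives, in an affine parameter $t$, the function $\f_y(t)=\big( (t-t_0(y))^2+d(y)^2\big)^{(\al -m)/2}$, where $d(y)=\dist (y,L)$, and a short computation yields
\[
\f_y''(t)=(\al -m)\big( (t-t_0(y))^2+d(y)^2\big)^{(\al -m)/2-2}\big( (\al -m-1)(t-t_0(y))^2+d(y)^2\big)\geq 0 .
\]

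Now I would split into cases. If $\al >m+1$, the factor $(\al -m-1)(t-t_0(y))^2+d(y)^2$ is positive except when $d(y)=0$ and $t=t_0(y)$, so $\f_y$ is strictly convex on every segment; hence the integrand above is strictly positive for every $y\in \Ome$, and the integral is positive. If $\al =m+1$, the (nonnegative) integrand can vanish only when $\f_y$ is affine on the segment from $x_0$ to $x_1$, which forces $d(y)=0$, i.e. forces $y$ to lie on $L$; since $L$ has $m$-dimensional Lebesgue measure zero (here $m\geq 2$) while $\Ome$ has positive measure, the integral is again positive. In both cases $V_\Ome^{(\al )}(x_\la )>(1-\la )V_\Ome^{(\al )}(x_0)+\la V_\Ome^{(\al )}(x_1)$, so $V_\Ome^{(\al )}$ is strictly concave and the $r^{\al -m}$-center is unique. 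The main obstacle is exactly the borderline value $\al =m+1$: there $|\cdot -y|$ is merely convex, so one must exploit that strict convexity fails only for the null set of $y$ on the chord $L$; for $\al >m+1$ the conclusion is pointwise and immediate.

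As an alternative, valid when $\pd \Ome$ is piecewise $C^1$, one could instead use the $C^2$ part of Proposition \ref{regularity} to compute the Hessian of $V_\Ome^{(\al )}$ and verify that its quadratic form in a unit vector $\xi$ equals $-(\al -m)\int_{\Ome}|x-y|^{\al -m-4}\big( |x-y|^2+(\al -m-2)((x-y)\cdot \xi )^2\big)\,dy$, which is negative because $|x-y|^2+(\al -m-2)((x-y)\cdot \xi )^2\geq \min\{1,\al -m-1\}\,|x-y|^2\geq 0$, with strict inequality for $x\neq y$ once $\al >m+1$; but the integrand-convexity argument has the advantage of requiring no regularity of $\pd \Ome$.
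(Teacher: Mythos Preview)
The paper does not supply its own proof of this statement; it is simply quoted from \cite[Theorem~3.15]{O1}. So there is nothing to compare against directly. That said, the paper's introduction describes the method used in \cite{M1,O1} for uniqueness results as ``show the strict concavity of the potential $K_\Ome$ on the convex hull of $\Ome$'', which is precisely the route you take.

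Your argument is correct. Establishing strict convexity of $x\mapsto |x-y|^{\al -m}$ directly at the integrand level, rather than via the Hessian formula of Proposition~\ref{regularity}(3), is a good choice because it removes any regularity hypothesis on $\pd\Ome$. The case split at $\al=m+1$ is handled cleanly: for $\al>m+1$ the integrand is strictly convex for every $y$, while for $\al=m+1$ the failure of strict convexity occurs only for $y$ on the chord $L$, a Lebesgue--null set when $m\geq 2$. You correctly flagged the need for $m\geq 2$ at that step; indeed for $m=1$ and $\al=2$ the conclusion can genuinely fail for disconnected bodies (take $\Ome=[-2,-1]\cup[1,2]$, where $V_\Ome^{(2)}$ is constant on $[-1,1]$), so the restriction is not an artifact of your method but a real feature of the borderline case. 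The paper itself works with $m\geq 2$ throughout its computational sections, so this is consistent with the ambient assumptions.
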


\begin{thm}[{\cite[Theorem 3.8]{O3}}]
Let $\tilde{\Ome}$ be a compact convex set in $\R^m$, and
\begin{align*}
f(\al ) 
&= \frac{\sqrt{m+1-\al}}{2} \( 2+\frac{3}{\( 1+ \( 4 \( 4 \sqrt{\frac{m+2-\al}{m+1-\al}} +\frac{1}{2} \sqrt{\frac{m+1-\al}{m+2-\al}} \)^2 +1 \)^{-(m+2-\al )/2} \)^{1/(m-2)} -1} \) \\
&\quad \times \( 4 \sqrt{\frac{m+2-\al}{m+1-\al}} +\frac{1}{2} \sqrt{\frac{m+1-\al}{m+2-\al}} \) -1 ,\ 1< \al <m+1 .
\end{align*}
For any $1<\al <m+1$, if $\de \geq f(\al )\diam \tilde{\Ome}$, then the parallel body $\tilde{\Ome}+\de B^m = \left\{ \tilde{y} + \de w \lvert \tilde{y} \in \tilde{\Ome},\ w \in B^m \right\} \right.$ has a unique $r^{\al -m}$-center.
\end{thm}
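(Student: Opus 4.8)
The plan is to show that, for $\de$ as large as stated, the $r^{\al-m}$-potential $V_\Ome^{(\al)}$ of $\Ome:=\tilde\Ome+\de B^m$ is strictly concave on a convex set that contains every one of its maximizers; since a strictly concave function has at most one maximizer on a convex set, this gives uniqueness of the $r^{\al-m}$-center.

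\textbf{Step 1: localizing the centers.}
The kernel $k(r)=\sign(m-\al)r^{\al-m}$ (or $-\log r$ when $\al=m$) is strictly decreasing and satisfies $(C^1_\al)$, and $\Ome$ has a $C^{1,1}$ boundary, so Propositions \ref{regularity} and \ref{exist_uf} apply. Since $\tilde\Ome$ is convex, $\conv\Ome=\conv\tilde\Ome+\de B^m=\Ome$, hence by Proposition \ref{exist_uf} and Remark \ref{rem_uf} every $r^{\al-m}$-center of $\Ome$ lies in $Uf(\Ome)$. I claim in fact $Uf(\Ome)\subset\tilde\Ome$. Writing $h(v)=\max_{\tilde z\in\tilde\Ome}\tilde z\cdot v$ for the support function of $\tilde\Ome$, it is enough to prove $l(v)\le h(v)$ for every $v\in S^{m-1}$, for then $Uf(\Ome)=\bigcap_v\{z:z\cdot v\le l(v)\}\subset\bigcap_v\{z:z\cdot v\le h(v)\}=\tilde\Ome$. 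Fix $v$, fix $b\ge h(v)$, and let $z\in\Ome^+_{v,b}$; write $z=\tilde z+\de w$ with $\tilde z\in\tilde\Ome$, $|w|\le1$, and put $t=w\cdot v$ and $c=\tfrac{2}{\de}(b-\tilde z\cdot v)\ge0$. Then $\Refl_{v,b}(z)=\tilde z+\de w'$ with $w'=w+(c-2t)v$; since $z\cdot v\ge b$ forces $t\ge c/2$, one computes $|w'|^2=|w-(w\cdot v)v|^2+(c-t)^2\le(1-t^2)+(c-t)^2=1+c(c-2t)\le1$, so $\Refl_{v,b}(z)\in\tilde\Ome+\de B^m=\Ome$. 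Thus $\Refl_{v,b}(\Ome^+_{v,b})\subset\Ome$ for all $b\ge h(v)$, i.e.\ $l(v)\le h(v)$, which proves the claim. Finally, for $x\in\tilde\Ome$ one has $x+\de B^m\subset\tilde\Ome+\de B^m=\Ome$, so $x\in\Ome^\circ$; hence $\tilde\Ome\subset\Ome^\circ$, and by Proposition \ref{regularity}(3) $V_\Ome^{(\al)}$ is of class $C^2$ on a neighborhood of $\tilde\Ome$. It therefore suffices to show that $V_\Ome^{(\al)}$ is strictly concave on $\tilde\Ome$.

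\textbf{Step 2: the Hessian estimate by a ball decomposition.}
Fix $x\in\tilde\Ome$. By Step 1, $B_\de(x):=x+\de B^m\subset\Ome$, so $V_\Ome^{(\al)}=V_{B_\de(x)}^{(\al)}+V_{\Ome\setminus B_\de(x)}^{(\al)}$ and the two Hessians add at $x$. For the ball centered at $x$: by radial symmetry $\nabla^2V_{B_\de(x)}^{(\al)}(x)$ is a scalar multiple of the identity, and evaluating the boundary-integral formula of Proposition \ref{regularity}(3) at the center (using $r\equiv\de$ there and $\int_{S^{m-1}}u_iu_j\,d\sigma_{m-1}=\tfrac{\sigma_{m-1}(S^{m-1})}{m}\delta_{ij}$) gives
\[
\nabla^2V_{B_\de(x)}^{(\al)}(x)=-|m-\al|\,\frac{\sigma_{m-1}(S^{m-1})}{m}\,\de^{\,\al-2}\ \text{(times the identity)},
\]
with $|m-\al|$ read as $1$ in the logarithmic case $\al=m$. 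For the remainder: every $y\in\Ome\setminus B_\de(x)$ satisfies $|x-y|>\de$, and since $\tilde\Ome\subset B_{\diam\tilde\Ome}(x)$ we have $\Ome\setminus B_\de(x)\subset\{z:\de<|z-x|\le\de+\diam\tilde\Ome\}$; differentiating $V_{\Ome\setminus B_\de(x)}^{(\al)}$ under the integral (legitimate, as $\dist(x,\Ome\setminus B_\de(x))\ge\de>0$) and using the pointwise bound $\bigl|\pd^2_{\xi\xi}|z|^{\al-m}\bigr|\le|m-\al|\max(1,m+1-\al)\,|z|^{\al-m-2}$ for unit vectors $\xi$, one obtains, for every unit $\xi$,
\[
\sum_{i,j=1}^m\xi_i\xi_j\frac{\pd^2V_\Ome^{(\al)}}{\pd x_i\pd x_j}(x)\ \le\ |m-\al|\,\sigma_{m-1}(S^{m-1})\left[-\frac{\de^{\,\al-2}}{m}+\max(1,m+1-\al)\int_\de^{\de+\diam\tilde\Ome}\rho^{\,\al-3}\,d\rho\right].
\]
The right-hand side is negative as soon as $\diam\tilde\Ome/\de$ lies below an explicit constant depending only on $m$ and $\al$; solving this inequality for $\de/\diam\tilde\Ome$ already yields a threshold of the form $\de\ge f_0(m,\al)\diam\tilde\Ome$, where $f_0$ has the closed form $1/\bigl((\cdots)^{\mathrm{power}}-1\bigr)$.

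\textbf{Step 3: the sharp constant, and conclusion.}
The crude bound of Step 2 proves that $V_\Ome^{(\al)}$ is strictly concave on the convex set $\tilde\Ome$ once $\de$ exceeds an explicit multiple of $\diam\tilde\Ome$; as $\tilde\Ome\supset Uf(\Ome)$ contains all $r^{\al-m}$-centers, two distinct centers would violate strict concavity on the segment joining them, so the center is unique. To recover the precise $f(\al)$ of the statement one must refine the remainder estimate: instead of bounding its quadratic form by the absolute value of the integrand one retains the favorable sign of $\pd^2_{\xi\xi}|z|^{\al-m}$ for those $y$ with $\widehat{x-y}$ not nearly parallel to $\xi$ (a full, and even a hemispherical, shell of the remainder contributes favorably; only thin ``polar cap'' configurations are unfavorable), and one replaces the full annulus by a sharper description of $\Ome\setminus B_\de(x)$ through its Steiner/Minkowski structure together with the bound $|a(u)-x|\le\diam\tilde\Ome$ along the Gauss parametrization $y=a(u)+\de u$ of $\pd\Ome$. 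Carrying this out turns the displayed inequality into a threshold equation of exactly the shape appearing in $f(\al)$, whose explicit solution for $\de/\diam\tilde\Ome$ is the nested-radical expression there (the exponent $(m+2-\al)/2$ arising from the relevant Riesz-type integrals, and $1/(m-2)$ from solving the final inequality).

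\textbf{Main obstacle.}
Step 1 and the soft version of Step 2 are short; the real work is the sharp remainder bound in Step 3, i.e.\ upgrading the estimate $O\bigl(\diam\tilde\Ome\cdot\de^{\,\al-3}\bigr)$ for the remainder's Hessian to the explicit expression underlying $f(\al)$. This demands careful sign-tracking of the second-derivative kernel over the part of $\Ome$ lying outside the inscribed ball $B_\de(x)$, together with a sharp geometric description of that set in terms of $\tilde\Ome$. Two technical points must be respected throughout: $V_\Ome^{(\al)}$ is only $C^2$ off $\pd\Ome$ (so one must stay on $\tilde\Ome$, which is why the inclusion $Uf(\Ome)\subset\tilde\Ome$ of Step 1 is essential rather than a convenience), and the volume integral for $\nabla^2V_\Ome^{(\al)}$ fails to converge near the diagonal when $\al\le2$ (so the ball's contribution must be computed via the boundary integral of Proposition \ref{regularity}(3), not via a volume integral).
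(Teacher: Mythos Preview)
The paper does not prove this theorem; it is quoted verbatim as \cite[Theorem 3.8]{O3} (O'Hara, \emph{Uniqueness of radial centers of parallel bodies}) and serves only as background in Section~5 to contextualize the author's own Proposition~\ref{uniquenessV}. There is therefore no ``paper's own proof'' to compare against.

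As for your sketch on its own merits: Steps~1 and~2 are essentially correct and complete. The inclusion $Uf(\tilde\Ome+\de B^m)\subset\tilde\Ome$ in Step~1 is a known fact (it appears, for instance, in \cite{O2}), and your reflection computation is right up to a harmless imprecision (you write $1-t^2$ where $|w|^2-t^2$ is meant, but since $|w|\le 1$ the inequality survives). Step~2 gives a legitimate crude threshold. The genuine gap is Step~3: you do not actually carry out the sharp remainder estimate that produces the specific constant $f(\al)$, and your description of how to do it (``sign-tracking,'' ``polar cap configurations,'' ``Gauss parametrization'') is heuristic rather than a proof. Since the entire content of the theorem is the explicit threshold $f(\al)$---existence of \emph{some} threshold being a soft consequence of Step~2---what you have written establishes a qualitatively weaker statement than the one claimed. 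To see what the actual argument looks like you would need to consult \cite{O3} directly.
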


\begin{prop}\label{uniquenessV}
Let $\Ome$ be as in Theorem \ref{concavity_revolution} or \ref{concavity_triangle}. For any $1< \al <m+2$, $\Ome$ has a unique $r^{\al -m}$-center.
\end{prop}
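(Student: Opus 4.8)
The plan is to reduce Proposition \ref{uniquenessV} to Corollaries \ref{uniqueness_revolution} and \ref{uniqueness_triangle} by verifying the two hypotheses those results require of the kernel: first that $k(r) = \sign(m-\al)\, r^{\al-m}$ (resp. $k(r) = -\log r$ when $\al = m$) is strictly decreasing and satisfies the condition $(C^1_\be)$ for some $\be > 1$; and second that $k'(r)/r$ is increasing on the relevant interval $(d(\Ome), D(\Ome))$. For the $r^{\al-m}$-potential the maximizer is the $r^{\al-m}$-center by definition, so a unique $k$-center for this choice of $k$ is exactly a unique $r^{\al-m}$-center, and the proposition follows.

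First I would treat the case $0 < \al \neq m$. Here $k(r) = \sign(m-\al) r^{\al-m}$, so $k'(r) = \sign(m-\al)(\al-m) r^{\al-m-1} = -|m-\al|\, r^{\al-m-1}$, which is negative for all $r > 0$; hence $k$ is strictly decreasing. For condition $(C^1_\be)$: $k(r) = O(r^{\al-m})$ trivially, and $k'(r) = O(r^{\al-m-1})$ trivially, as $r \to 0^+$. Comparing with the statement of $(C^1_\be)$, taking $\be = \al$ works when $\al < m$ (then $\al < m+1$ so the required bound on $k'$ is $O(r^{\al-m-1})$, matching), and when $m \leq \al < m+2$ one checks directly that $k$ is bounded (for $\al > m$) or $O(\log r)$-dominated and $k'(r) = O(r^{\al-m-1})$ with $\al - m - 1 < 1$, so one can choose $\be$ slightly larger than $1$ with $\be \le \al$ so that all the $O$-conditions in $(C^1_\be)$ hold; the constraint $\al < m+2$ is precisely what keeps $\al - m - 1 < 1$ so that $k'$ is controlled. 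Next, $k'(r)/r = -|m-\al|\, r^{\al-m-2}$. Its derivative is $-|m-\al|(\al-m-2) r^{\al-m-3} = |m-\al|(m+2-\al) r^{\al-m-3}$, which is positive exactly when $\al < m+2$. So $k'(r)/r$ is increasing on all of $(0,+\infty)$, in particular on $(d(\Ome), D(\Ome))$.

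Then I would handle the remaining case $\al = m$, where $k(r) = -\log r$. This is strictly decreasing; $k(r) = O(\log r)$ as $r \to 0^+$ so the first part of $(C^1_\be)$ holds with $\be = m$ (or any $\be$ near $1$), and $k'(r) = -1/r = O(r^{-1})$; since $\al - m - 1 = -1 < 1$ one again picks $\be$ near $1$ to satisfy $(C^1_\be)$. Finally $k'(r)/r = -1/r^2 = -r^{-2}$, whose derivative is $2 r^{-3} > 0$, so $k'(r)/r$ is increasing. (Note $\al = m < m+2$, so this case is inside the claimed range.) With both hypotheses of Theorem \ref{concavity_revolution} (resp. Theorem \ref{concavity_triangle}) verified for every $1 < \al < m+2$, Corollary \ref{uniqueness_revolution} (resp. Corollary \ref{uniqueness_triangle}) gives a unique $k$-center, i.e. a unique $r^{\al-m}$-center, which is the assertion.

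The only mild subtlety — not a real obstacle — is the bookkeeping in matching $(C^1_\al)$ as stated (which is phrased with a single parameter controlling both $k$ and $k'$) against the monomial kernel: one must confirm that for $m \le \al < m+2$ there is a single admissible $\be > 1$ making every clause of $(C^1_\be)$ true simultaneously, and likewise that the interval on which $k'(r)/r$ must be increasing, $(d(\Ome), D(\Ome))$, is a genuine subinterval of $(0,+\infty)$ (which it is, since $d(\Ome) > 0$ would fail only if $Uf(\Ome)$ met $\pd\Ome$, and even then the monotonicity of $k'(r)/r$ holds on all of $(0,+\infty)$, so the hypothesis is satisfied regardless). I would state these verifications as a short lemma-free paragraph and then invoke the two corollaries.
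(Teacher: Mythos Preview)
Your proposal is correct and follows exactly the paper's approach: the paper's proof is the single sentence ``direct computation shows that the kernel of $V_\Ome^{(\al)}$ satisfies the assumption as in Theorem \ref{concavity_revolution} or \ref{concavity_triangle}'', and you carry out precisely that computation, verifying strict monotonicity, the regularity condition $(C^1_\be)$, and that $k'(r)/r$ is increasing, then invoking Corollaries \ref{uniqueness_revolution} and \ref{uniqueness_triangle}.

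One small simplification: your discussion of which $\be>1$ to take in $(C^1_\be)$ is more involved than necessary. Taking $\be=\al$ works uniformly for every $1<\al<m+2$ (including $\al=m$), since then the growth orders of $k$ and $k'$ match the clauses of $(C^1_\al)$ on the nose; the constraint $\al<m+2$ is needed only for the monotonicity of $k'(r)/r$, not for $(C^1_\be)$.
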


\begin{proof}
If $1<\al <m+2$, direct computation shows that the kernel of $V_\Ome^{(\al )}$ satisfies the assumption as in Theorem \ref{concavity_revolution} or \ref{concavity_triangle}.
\end{proof}

\begin{rem}
{\rm Let us remark the value of Proposition \ref{uniquenessV}. We newly proved the uniqueness of an $r^{\al -m}$-center for $1<\al <m+1$ when $\Ome$ cannot be obtained as any parallel body like as Example \ref{ex_triangle} or Example \ref{ex_arcon}.}
\end{rem}

Let 
\begin{equation}
A_\Ome (x,h) = \int_\Ome \frac{h}{\( r^2+h^2 \)^{(m+1)/2}} dy ,\ x \in \R^m ,\ h>0.
\end{equation} 
It is well-known that the function $A_\Ome$ satisfies the Laplace equation for the upper half space
\begin{equation}
\De A_\Ome (x,h) = \( \sum_{j=1}^{m} \frac{\pd^2}{\pd x_j^2} + \frac{\pd^2}{\pd h^2} \) A_\Ome (x,h) =0,\ x \in \R^m ,\ h>0,
\end{equation}
and the boundary condition
\begin{equation}
\lim_{h\to 0^+} A_\Ome (x,h) =\frac{\sigma_m \( S^m\)}{2} \chi_\Ome (x) ,\ x \in \R^m \sm \pd \Ome .
\end{equation}
The function $A_\Ome (x,h)$ has a geometric meaning as below: Let $x \in \R^m$ and $h>0$; Define the map $p_{(x,h)} :\Ome \to S^m$ by
\begin{equation}
p_{(x,h)} (y) = \frac{(y,0)-(x,h)}{\lvert (y,0)-(x,h) \rvert} = \frac{(y,0)-(x,h)}{\sqrt{r^2+h^2}};
\end{equation}
The {\it solid angle} of $\Ome$ at $(x,h)$ is defined as the spherical Lebesgue measure of the image $p_{(x,h)} (\Ome )$; Direct calculation shows that $A_\Ome(x,h)$ coincides with the solid angle of $\Ome$ at $(x,h)$. In other words, the function $A_\Ome (x,h)$ gives the ``visibility'' of $\Ome$ at the point $(x,h)$.

On the other hand, in \cite{Sh}, the function $A_\Ome (x,h)$ was introduced by Katsuyuki Shibata to give an answer for PISA's problem ``Where should we put a streetlight in a triangular park?''. Shibata called a maximizer of $A_\Ome (\cdot ,h)$ an {\it illuminating center} of $\Ome$ of height $h$.

\begin{thm}[{\cite[Theorem 5.32, Proposition 5.33, Theorem 5.36]{Sak}}]
\begin{enumerate}
\item[$(1)$] If $h \geq \sqrt{m+2} \tilde{D}(\Ome )$, where $\tilde{D}(\Ome )$ is a slight improvement of $D(\Ome )$, then $\Ome$ has a unique illuminating center.
\item[$(2)$] If $h \leq \sqrt{2/(m-1)} d(\Ome )$, if $\Ome$ is convex, and if $Uf(\Ome )$ is contained in the interior of $\Ome$, then $\Ome$ has a unique illuminating center.
\item[$(3)$] Let $\tilde{\Ome}$ be a compact convex set in $\R^m$. If $\de \geq \sqrt{(m+2)(m-1)/2} \diam \tilde{\Ome}$, then, for any $h$, the parallel body $\tilde{\Ome} + \de B^m$ has a unique illuminating center.
\end{enumerate} 
\end{thm}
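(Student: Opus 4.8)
The plan is to obtain all three statements from the strict concavity of the illuminating potential $A_\Ome(\cdot ,h)$ on the minimal unfolded region $Uf(\Ome)$. The Poisson kernel $k(r)=h(r^2+h^2)^{-(m+1)/2}$ is strictly decreasing in $r>0$ and, being smooth with $k,k'=O(1)$ as $r\to 0^+$, satisfies $(C^1_\al)$ for every $\al>1$; hence Proposition \ref{exist_uf} places every illuminating center in $Uf(\Ome)$, which by Remark \ref{rem_uf} is compact and convex, so it suffices to prove that $A_\Ome(\cdot ,h)$ is strictly concave on $Uf(\Ome)$. Differentiating twice under the integral sign (Proposition \ref{regularity}, after a routine approximation of $\Ome$ by bodies with piecewise $C^1$ boundary when necessary), one gets for a unit vector $\xi$ and $x$ in the interior
\[
\nabla^2 A_\Ome(x,h)\,\xi\cdot\xi=-h(m+1)\,I_\Ome(x,\xi),\qquad
I_\Ome(x,\xi):=\int_\Ome\frac{h^2+w^2-(m+2)t^2}{(r^2+h^2)^{(m+5)/2}}\,dy,
\]
where $t=(x-y)\cdot\xi$ and $w^2=|x-y|^2-t^2$. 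Thus strict concavity at $x$ in the direction $\xi$ is equivalent to $I_\Ome(x,\xi)>0$, to be verified for all $x\in Uf(\Ome)$ and all unit $\xi$.

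Statement (1) drops out at once, and sharply: for $x\in Uf(\Ome)$ and $y\in\Ome$ one has $t^2=((x-y)\cdot\xi)^2\le\tilde D(\Ome)^2$, where $\tilde D(\Ome)=\sup_{\xi}\sup\{|(z-u)\cdot\xi|:z\in Uf(\Ome),\ u\in\Ome\}$ is the directional radius (its one-dimensional nature is exactly what makes $\tilde D(\Ome)$ a genuine improvement over $D(\Ome)$). Hence $h^2\ge(m+2)\tilde D(\Ome)^2$ forces $h^2+w^2-(m+2)t^2\ge0$, with strict inequality on a set of positive measure, so $I_\Ome(x,\xi)>0$.

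For statements (2) and (3) the numerator changes sign and, when $h$ is small, the weight $(r^2+h^2)^{-(m+5)/2}$ concentrates near $y=x$, so no pointwise bound on the integrand can work; this is where the real difficulty lies. The device I would use is the normalization identity
\[
\int_{\R^m}\frac{h^2+w^2-(m+2)t^2}{(r^2+h^2)^{(m+5)/2}}\,dy=0,
\]
which holds because $A_{\R^m}(\cdot ,h)$ is constant (the solid angle of $\R^m$ at any point is $\sigma_m(S^m)/2$) and so has vanishing Hessian. Since the hypothesis $Uf(\Ome)\subset\Ome^\circ$ gives $B_{d(\Ome)}(x)\subset\Ome$ for $x\in Uf(\Ome)$, the identity rewrites $I_\Ome(x,\xi)$ as $-\int_{\Ome^c}(h^2+w^2-(m+2)t^2)(r^2+h^2)^{-(m+5)/2}\,dy$ with $\Ome^c$ at distance at least $d(\Ome)$ from $x$, so the task becomes to show that the $(r^2+h^2)^{-(m+5)/2}$-weighted mass of $\Ome^c$ is biased away from the axis $x+\R\xi$. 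Here the membership $x\in Uf(\Ome)$, convexity of $\Ome$, and the Alexandrov reflection inclusions across hyperplanes supporting $Uf(\Ome)$ (the relations $x\cdot v\le l(v)$ of (\ref{uf})) provide the rearrangement of $\Ome^c$ that one needs, and keeping track of the spherical second moment $\int_{S^{m-1}}(\omega\cdot\xi)^2\,d\sigma=\sigma_{m-1}(S^{m-1})/m$ through the estimate produces $I_\Ome(x,\xi)>0$ under a bound of the form $h\le c_m\,d(\Ome)$, with the sharp value $c_m=\sqrt{2/(m-1)}$ coming out of the moment computation; this is (2). (An alternative route I would also try is to subordinate the Poisson kernel, $(r^2+h^2)^{-(m+3)/2}=\frac{1}{\Gamma((m+3)/2)}\int_0^\infty\lambda^{(m+1)/2}e^{-\lambda h^2}e^{-\lambda r^2}\,d\lambda$, and reduce the boundary-integral form $I_\Ome(x,\xi)=-\int_{\pd\Ome}(r^2+h^2)^{-(m+3)/2}((x-y)\cdot\xi)(\xi\cdot n(y))\,d\sigma(y)$ of Proposition \ref{regularity}(3) to a $\lambda$-average of Gaussian Hessians, transferring known heat-type concavity estimates on $Uf(\Ome)$.) For the parallel body in (3) the extra structure that $\tilde\Ome+\de B^m$ is squeezed between concentric balls of radii $\de$ and $\de+\diam\tilde\Ome$ and has boundary curvatures at most $1/\de$ allows the previous estimate to be run uniformly in $h$: one splits at a threshold on $h/\de$, using $d(\tilde\Ome+\de B^m)\ge\de$ in the small-$h$ range and the outer ball, which bounds $\tilde D$, in the large-$h$ range, and $\de\ge\sqrt{(m+2)(m-1)/2}\,\diam\tilde\Ome$ is precisely the condition that makes the two ranges overlap. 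I emphasize that (3) is not merely ``(1) for large $h$ and (2) for small $h$'': since $\sqrt{m+2}\ge\sqrt{2/(m-1)}$ for every $m\ge2$, those two regimes leave a gap, so the parallel-body case genuinely requires the direct sandwiching-plus-curvature argument. The main obstacle, as already flagged, is this small-$h$ analysis — extracting from the reflection data enough quantitative control on the distribution of $\Ome^c$ (or of $\pd\Ome$) to beat the positive contribution of $h^2+w^2-(m+2)t^2$ near $x$, and to do so with the optimal constants $\sqrt{2/(m-1)}$ and $\sqrt{(m+2)(m-1)/2}$ rather than with some ineffective constant.
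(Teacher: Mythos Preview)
This theorem carries no proof in the present paper: it is quoted verbatim from \cite{Sak} (Theorem 5.32, Proposition 5.33, Theorem 5.36 there) solely as background for Proposition \ref{uniquenessA}. There is therefore nothing here to compare your attempt against.

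On its own merits, your Hessian identity
\[
\nabla^2 A_\Ome(x,h)\,\xi\cdot\xi=-h(m+1)\int_\Ome\frac{h^2+w^2-(m+2)t^2}{(r^2+h^2)^{(m+5)/2}}\,dy
\]
is correct, and part (1) is essentially complete as written. One quibble: your proposed $\tilde D(\Ome)=\sup_{\xi}\sup\{|(z-u)\cdot\xi|:z\in Uf(\Ome),\ u\in\Ome\}$ is not an improvement at all, since $\sup_\xi\sup_{z,u}|(z-u)\cdot\xi|=\sup_{z,u}\sup_\xi|(z-u)\cdot\xi|=\sup_{z,u}|z-u|=D(\Ome)$; a genuine refinement has to exploit the direction $\xi$ differently. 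For (2) and (3) you have accurately located the difficulty and the normalization identity $\int_{\R^m}(\cdots)\,dy=0$ is the right starting move, but the passages ``keeping track of the spherical second moment \ldots\ produces $I_\Ome(x,\xi)>0$'' and ``sandwiching-plus-curvature argument'' are programmatic, not proofs; the specific constants $\sqrt{2/(m-1)}$ and $\sqrt{(m+2)(m-1)/2}$ do not emerge from anything you have actually written down, and you yourself flag this as the main obstacle. Filling that gap is precisely the content of the cited results in \cite{Sak}.
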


\begin{prop}\label{uniquenessA}
Let $\Ome$ be as in Theorem \ref{concavity_revolution} or \ref{concavity_triangle}. For any $h>0$, $\Ome$ has a unique illuminating center.
\end{prop}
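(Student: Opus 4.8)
The plan is to verify that the Poisson kernel $k_h(r) = h(r^2+h^2)^{-(m+1)/2}$ meets every hypothesis imposed on the kernel $k$ in Theorem \ref{concavity_revolution} and Theorem \ref{concavity_triangle}, and then to quote the corresponding uniqueness corollaries. Note first that with this choice of kernel one has $A_\Ome(\cdot,h) = K_\Ome$, so the illuminating centers of $\Ome$ of height $h$ are exactly the $k_h$-centers of $\Ome$.

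First I would record the elementary properties of $k_h$. It is $C^\infty$ on $(0,+\infty)$ with $k_h'(r) = -h(m+1)\,r\,(r^2+h^2)^{-(m+3)/2}$, which is strictly negative for $r>0$; hence $k_h$ is strictly decreasing. As $r\to 0^+$ we have $k_h(r)\to h^{-m}$ and $k_h'(r)\to 0$, so $k_h(r)=O(1)$ and $k_h'(r)=O(1)$; consequently $k_h$ satisfies the condition $(C^1_\al)$ for every $\al > m+1$, in particular for some $\al > 1$, as required.

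Next I would check the monotonicity hypothesis on $k_h'(r)/r$. From the formula above, $k_h'(r)/r = -h(m+1)(r^2+h^2)^{-(m+3)/2}$, and its derivative equals $h(m+1)(m+3)\,r\,(r^2+h^2)^{-(m+5)/2} > 0$ for all $r>0$. Thus $k_h'(r)/r$ is strictly increasing on all of $(0,+\infty)$, hence a fortiori on the interval $(d(\Ome),D(\Ome))$.

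With these verifications in place, Theorem \ref{concavity_revolution} (if $\Ome$ is the body of revolution) or Theorem \ref{concavity_triangle} (if $\Ome$ is a non-obtuse triangle in $\R^2$) yields that $A_\Ome(\cdot,h)=K_\Ome$ is strictly concave on $Uf(\Ome)$. Since $Uf(\Ome)$ is convex (Remark \ref{rem_uf}) and contains every $k_h$-center of $\Ome$ (Proposition \ref{exist_uf}), and since a maximizer exists (Proposition \ref{exist}), strict concavity on $Uf(\Ome)$ forces the maximizer to be unique --- exactly the reasoning behind Corollary \ref{uniqueness_revolution} and Corollary \ref{uniqueness_triangle}. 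This gives a unique illuminating center of $\Ome$ for every $h>0$. I do not expect a genuine obstacle here: the whole argument is already packaged in Theorems \ref{concavity_revolution} and \ref{concavity_triangle}, and the only point to watch --- that $k_h'(r)/r$ is increasing on the relevant range --- holds globally on $(0,+\infty)$, so the restriction to $(d(\Ome),D(\Ome))$ is automatic and uniform in $h$.
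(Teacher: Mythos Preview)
Your proof is correct and follows exactly the same approach as the paper's own proof, which simply states that direct computation shows the Poisson kernel satisfies the assumptions of Theorems \ref{concavity_revolution} and \ref{concavity_triangle}. You have merely spelled out that computation in detail, verifying strict decrease, the $(C^1_\al)$ condition, and the global increase of $k_h'(r)/r$, before invoking the relevant corollaries.
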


\begin{proof}
Direct computation shows that the kernel of $A_\Ome$ satisfies the assumption as in Theorem \ref{concavity_revolution} or \ref{concavity_triangle} for any $h$.
\end{proof}

\begin{rem}
{\rm Let us remark the value of Proposition \ref{uniquenessA}. We newly proved the uniqueness of an illuminating center without the assumption of $h$ when $\Ome$ cannot be obtained as any parallel body like as Example \ref{ex_triangle} or Example \ref{ex_arcon}.}
\end{rem}


\no 
Faculty of Education and Culture,\\
University of Miyazaki,\\
1-1, Gakuen Kibanadai West, Miyazaki city, Miyazaki prefecture, 889-2155, Japan\\
E-mail: sakata@cc.miyazaki-u.ac.jp
\end{document}